\newif\ifshowtikz
\let\oldtikzcd\tikzcd
\let\oldendtikzcd\endtikzcd
\let\oldtikzpicture\tikzpicture
\let\oldendtikzpicture\endtikzpicture
\renewenvironment{tikzpicture}{%
    \ifshowtikz\expandafter\oldtikzpicture%
    \else\comment%   
    \fi
}{%
    \ifshowtikz\oldendtikzpicture%
    \else\endcomment%
    \fi
}
\newcolumntype{C}{>{$}c<{$}}
\renewcommand{\P}{\mathbb{P}}
\newcommand{\R}{\mathbb{R}}
\newcommand{\C}{\mathbb{C}}
\newcommand{\A}{\mathbb{A}}
\newcommand{\Z}{\mathbb{Z}}
\newcommand{\Hom}{\operatorname{Hom}}
\newcommand{\Ext}{\operatorname{Ext}}
\newcommand{\End}{\operatorname{End}}
\newcommand{\perf}{\operatorname{perf}}
\renewcommand{\S}{\mathcal{S}}
\renewcommand{\hom}{\operatorname{hom}}
\newcommand{\Spec}{\operatorname{Spec}}
\newcommand{\vc}[2]{{{}^{#2}V_{#1}}}
\newcommand{\vcc}[3]{{{}^{#2}V_{#1}^{#3}}}
\newcommand{\w}{\mathbf{w}}
\newcommand{\wt}{\check{\mathbf{w}}}
\newcommand{\W}{\mathbf{W}}
\newcommand{\Jac}{\text{Jac}}
\newcommand{\HH}{\text{HH}}
\newcommand{\SH}{\text{SH}}
\newcommand{\mf}{\text{mf}}
\newcommand{\coh}{\operatorname{coh}}
\newcommand{\Qcoh}{\operatorname{Qcoh}}
\newcommand{\mcg}{\mathcal{M}(\Sigma;\partial\Sigma)}
\theoremstyle{plain}
\newtheorem{mconj}{Conjecture}
\newtheorem{thm}{Theorem}[section]
\newtheorem{lem}[thm]{Lemma}
\newtheorem{cor}[thm]{Corollary}
\newtheorem{definition}{Definition}[section]
\theoremstyle{remark}
\newtheorem{rmk}[thm]{Remark}
\title{Homological mirror symmetry for invertible polynomials in two variables}
\address{Universit\"at Hamburg\\ Fachbereich Mathematik\\ Bundesstra{\ss}e 55\\ 20146 Hamburg\\ Germany}
\author{Matthew Habermann}
\email{matthew.habermann@uni-hamburg.de}
\begin{document}
\begin{abstract}
In this paper, we give a proof of homological mirror symmetry for two variable invertible polynomials, where the symmetry group on the B--side is taken to be maximal. The proof involves an explicit gluing construction of the Milnor fibres, and, as an application, we prove derived equivalences between certain nodal stacky curves, some of whose irreducible components have non-trivial generic stabiliser. 
\end{abstract}
\maketitle
\section{Introduction}
Consider an $n\times n$ matrix $A$ with non-negative integer entries $a_{ij}$. From this, we can define a polynomial $\w\in\C[x_1,\dots,x_n]$ given by
\begin{align*}
\w(x_1,\dots,x_n)=\sum_{i=1}^n\prod_{j=1}^nx_j^{a_{ij}}. 
\end{align*} 
In what follows $\w$ will always be quasi-homogeneous, and so we can associate to it a \emph{weight system} $(d_0,d_1,\dots, d_n;h)$, where 
\begin{align*}
\w(t^{d_1}x_1,\dots, t^{d_n}x_n)=t^h\w(x_1,\dots,x_n),
\end{align*}
and $d_{0}:=h-d_1-\dots-d_n$. In \cite{BerglundHubsch}, the authors define the transpose of $\w$, denoted by $\check{\w}$, to be the polynomial associated to $A^T$, 
\begin{align*}
\check{\w}(\check{x}_1,\dots,\check{x}_n)=\sum_{i=1}^n\prod_{j=1}^n\check{x}_j^{a_{ji}},
\end{align*}
and we call this the \emph{Berglund--H\"ubsch transpose}. One can associate a weight system for $\check{\w}$, denoted by $(\check{d}_{0},\check{d}_1,\dots,\check{d}_n;\check{h})$, in the same way. We call a polynomial $\w$ \emph{invertible} if the matrix $A$ is invertible over $\mathbb{Q}$, and if both $\w$ and $\check{\w}$ define isolated singularities at the origin (cf. Definition \ref{invertible poly}).\\

Recall that for $f\in\C[x_1,\dots, x_n]$ and $g\in \C[y_1,\dots, y_m]$, their Thom--Sebastiani sum is defined as 
\begin{align}\label{TSSum}
f\boxplus g=f\otimes 1+1\otimes g\in \C[x_1,\dots, x_n,y_1,\dots, y_m].
\end{align} 
A corollary of Kreuzer--Skarke's classification of quasi-homogeneous polynomials, \cite{kreuzer1992}, is that any invertible polynomial can be decoupled into the Thom--Sebastiani sum of \textit{atomic} polynomials of the following three types:
\begin{itemize}
	\item Fermat: $\w=x_1^{p_1}$,
	\item Loop: $\w=x_1^{p_1}x_2+x_2^{p_2}x_3+\dots+x_n^{p_n}x_1$,
	\item Chain: $\w=x_1^{p_1}x_2+x_2^{p_2}x_3+\dots+x_n^{p_n}$.
\end{itemize}
The Thom--Sebastiani sums of polynomials of Fermat type are also called Brieskorn--Pham.\\

To any invertible polynomial, one can associate its \textit{maximal symmetry group}
\begin{align}\label{MaxSymGp}
\Gamma_{\w}:=\{(t_1,\dots,t_{n+1})\in(\C^*)^{n+1}|\ \w(t_1x_1,\dots, t_nx_n)=t_{n+1}\w(x_1,\dots, x_n)\}.
\end{align}
Since the $t_{n+1}$ variable is uniquely determined by the other $t_i$, we will think of $\Gamma_{\w}$ as a subgroup of $(\C^*)^n$. It is a finite extension of $\C^*$, and is the group of diagonal transformations of $\A^n$ which keep $\w$ semi-invariant with respect to the character $(t_1\,\dots,t_{n+1})\mapsto t_{n+1}$. Homological Berglund--H\"ubsch mirror symmetry predicts:
\begin{mconj}\label{BH conj} For any invertible polynomial, $\w$, there is a quasi-equivalence 
	\begin{align*}
	\mathrm{mf}(\A^n,\Gamma_\w,\w)\simeq \mathcal{F}(\check{\w})
	\end{align*}
of pre-triangulated $A_\infty$-categories over $\C$.
\end{mconj}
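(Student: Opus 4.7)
The plan is to proceed by a case analysis based on the Kreuzer--Skarke classification, which in two variables reduces every invertible polynomial, up to reordering variables, to one of three atomic types: Brieskorn--Pham $\w = x_1^{p_1} + x_2^{p_2}$, chain $\w = x_1^{p_1}x_2 + x_2^{p_2}$, or loop $\w = x_1^{p_1}x_2 + x_2^{p_2}x_1$. The Brieskorn--Pham case should follow from the one-variable Fermat case -- where both sides are semisimple of the expected rank -- together with a Thom--Sebastiani/K\"unneth-type product theorem on both matrix factorisations and partially wrapped Fukaya categories. The substantive work is therefore in the chain and loop cases, where $\w$ is genuinely two-variable.

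For each such $\w$, the strategy is to construct matching collections of generators on the two sides and then match their endomorphism $A_\infty$-algebras. On the A-side, the first step is an explicit combinatorial model of the Milnor fibre $M(\check{\w})$ as an open Riemann surface $\Sigma$ with stops at infinity, built by the ``explicit gluing construction'' advertised in the abstract: decompose $\Sigma$ as a union of standard Stein local pieces glued along cylinders, the gluing pattern being dictated by the monomials of $\check{\w}$. A sheaf-of-categories argument in the style of Ganatra--Pardon--Shende then reduces the computation of $\mathcal{F}(\check{\w})$ to these local pieces and produces a generating collection of Lagrangian arcs whose endomorphism algebra is the path algebra of an explicit quiver $Q$ with relations, the $A_\infty$-products being immersed-polygon counts on $\Sigma$.

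On the B-side, the idea is to use Orlov's LG/Calabi--Yau correspondence together with Kn\"orrer periodicity to identify $\mf(\A^2, \Gamma_\w, \w)$ with the bounded derived category of coherent sheaves on the stacky curve $\mathcal{C}_{\w} := [(\{\w = 0\} \setminus \{0\})/\Gamma_\w]$. For chain and loop $\w$, this is a nodal stacky curve some of whose irreducible components carry non-trivial generic stabilisers coming from the additional $\C^*$-factors of $\Gamma_\w$. I would then construct a tilting object on $\mathcal{C}_\w$ whose endomorphism algebra is the same quiver-with-relations $Q$ as on the A-side. The derived equivalences between nodal stacky curves mentioned in the abstract arise as a by-product: different invertible polynomials can lead to different presentations of the same $\mathcal{C}_\w$ (or to stacky curves with equivalent derived categories), forcing coincidences that are mysterious from a purely algebro-geometric viewpoint but transparent on the A-side.

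The main obstacle is upgrading the identification of underlying graded algebras to a quasi-equivalence of $A_\infty$-categories: the higher products on the A-side are immersed-polygon counts, while on the B-side they are Massey products on $\mathcal{C}_\w$, and these must be matched despite the twisted-sector contributions from components with non-trivial generic stabiliser -- a feature absent in the Brieskorn--Pham case and genuinely new here. The route I would take is to show that in each case the $A_\infty$-structure is rigid, in the sense of being determined by a finite amount of Hochschild cohomological data, and then pin that data down directly in both models. The chain case is likely to be the most delicate, since the maximal symmetry group does not split as a product of Thom--Sebastiani factors and the stacky structure on $\mathcal{C}_\w$ is heterogeneous across its components, so the match of generators has to be done component-by-component rather than by a single global Thom--Sebastiani argument.
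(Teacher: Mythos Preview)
The statement you are attempting to prove is presented in the paper as a \emph{conjecture} (Conjecture~\ref{BH conj}), not a theorem, and the paper contains no proof of it. The paper records that the two-variable case was established in prior work \cite{HabermannSmith} and then \emph{uses} that result as input for its actual main theorem, Theorem~\ref{main theorem}, which concerns the compact Fukaya category of the Milnor fibre $\mathcal{F}(\check{V}_{\check{\w}})$ rather than the Fukaya--Seidel category $\mathcal{F}(\check{\w})$. Your proposal conflates these two statements throughout.

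The conflation is most visible on your B-side. You propose to identify $\mf(\A^2,\Gamma_\w,\w)$ with coherent sheaves on a nodal stacky curve $\mathcal{C}_\w$ via Orlov, but this cannot be right: $\mf(\A^2,\Gamma_\w,\w)$ carries a full strong exceptional collection (it is the mirror of a directed Fukaya--Seidel category) and is not equivalent to $\coh$ of any proper Calabi--Yau curve. The Orlov equivalence actually used in the paper is \eqref{coh mf quasi equiv}, which identifies $\mf(\A^{3},\Gamma,\W_u)$ with $\coh Y_u$ only after adding a variable and quasi-homogenising an unfolding; it is $\perf Y_u$, not $\mf(\A^2,\Gamma_\w,\w)$, that is mirror to the compact Fukaya category of the Milnor fibre. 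The nodal stacky curves and the derived equivalences you extracted from the abstract belong to Theorem~\ref{main theorem} and Corollary~\ref{HMS corollary}, not to Conjecture~\ref{BH conj}. Likewise on the A-side: the ``explicit gluing construction of the Milnor fibres'' in the abstract is used in Section~\ref{gluing annuli} to compute topological invariants of $\check{V}_{\check{\w}}$ (genus, boundary winding numbers, symplectic cohomology) for the Milnor-fibre statement, not to model the Lefschetz fibration or compute $\mathcal{F}(\check{\w})$.
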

In the above, $\mathrm{mf}(\A^n,\Gamma_\w,\w)$ is the category of $\Gamma_{\w}$--equivariant matrix factorisations of $\w$, and $\mathcal{F}(\check{\w})$ is the Fukaya--Seidel category associated to a Morsification of $\check{\w}$, as defined in \cite{SeidelBook}. Conjecture \ref{BH conj} goes back to \cite{Weightedprojectivelines} and \cite{2006math......4361U}, and there have recently been many results in the direction of establishing it. It has been proven in several cases -- in particular, for Brieskorn--Pham polynomials in any number of variables in \cite{Futaki2011}, and for Thom--Sebastiani sums of polynomials of type $A$ and $D$ in \cite{FU2}. Conjecture \ref{BH conj} is also established for all invertible polynomials in two variables in \cite{HabermannSmith}. For each class of invertible polynomial, recent work of Kravets (\cite{2019arXiv191109859K}) establishes a full, strong, exceptional collection for $\mathrm{mf}(\A^n,\Gamma_\w,\w)$ with $n\leq 3$. In the case of chain polynomials in any number of variables, Hirano and Ouchi (\cite{HiranoOuchi}) show that the category $\mathrm{mf}(\A^n,\Gamma_\w,\w)$ has a tilting object, and a full, strong, exceptional collection whose length is the Milnor number of $\check{\w}$. For further discussion and background on Conjecture \ref{BH conj}, see \cite{2016arXiv160106027E}, and references therein.\\

There is also an extension of Conjecture \ref{BH conj} (\cite{10.1093/imrn/rns115}, \cite{Weightedprojectivelines}, \cite{Krawitz}), where rather than considering the maximal symmetry group, one considers certain subgroups of finite index. Correspondingly, one must then consider an `orbifold Fukaya--Seidel' category, which incorporates a dual group in its data. Recently, this generalised conjecture which takes into account a symmetry group on the A--side was established in the $\Z/2$-graded case for two variable invertible polynomials in \cite{ChoChoaJeong}.\\

The main focus of this paper is homological mirror symmetry where the (completion of the) Milnor fibre of $\wt$, 
\begin{align}\label{MilnorFibre}
\check{V}_{\wt}:=\wt^{-1}(1)
\end{align}
is taken as the A--model. On the B--side, one extends the action of $\Gamma_{\w}$ to $\A^{n+1}$ in a natural way, as described in Section \ref{background section} for the case of $n=2$. The Lekili--Ueda conjecture predicts:
\begin{mconj}[{\cite[Conjecture 1.4]{2018arXiv180604345L}}]\label{LUConj}
For any pair of invertible polynomials $\w$, $\wt$, there is a quasi-equivalence
\begin{align*}
\mathcal{W}(\check{V}_{\wt})\simeq\mathrm{mf}(\A^{n+1},\Gamma_{\w},\w+x_0x_1\dots x_n)
\end{align*}
of pre-triangulated $A_\infty$-categories over $\C$. 
\end{mconj}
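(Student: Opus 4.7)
The plan is to reduce the general conjecture to atomic polynomials of Fermat, loop, and chain type via the Kreuzer--Skarke classification, establish each atomic case in arbitrary dimension individually, and then assemble the general statement by a Thom--Sebastiani compatibility argument on both sides. This mirrors the structure of the proofs of Conjecture \ref{BH conj}, with the added complication that the B--side potential is now $\w+x_0x_1\cdots x_n$, and the coupling term $x_0x_1\cdots x_n$ does not split as a Thom--Sebastiani sum when one decomposes $\w$.

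For the atomic cases, on the B--side the potential $\w+x_0x_1\cdots x_n$ defines a one-dimensional hypersurface singularity whose matrix factorisation category should admit an explicit tilting object or full strong exceptional collection: for chains in arbitrary dimension, Hirano--Ouchi essentially supply such an object, and analogous constructions based on mutations of exceptional collections should yield tilting objects for loops. On the A--side, the Milnor fibre $\check{V}_{\wt}$ inherits a Lefschetz fibration from a coordinate projection whose singular fibres are described explicitly via the quasi-homogeneous $\C^*$--action, which degenerates $\wt$ to a configuration of coordinate hyperplanes. A distinguished basis of Lefschetz thimbles then generates the partially wrapped Fukaya category by work of Ganatra--Pardon--Shende, and the equivalence reduces to matching the endomorphism algebra of these thimbles with the endomorphism algebra of the B--side tilting object, together with a comparison of higher $A_\infty$--operations.

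For the assembly step, the essential input is Künneth-type theorems on both sides. On the B--side, matrix factorisation categories admit a tensor product decomposition for pure Thom--Sebastiani sums, so the task is to control the deformation from a split potential back to $\w+x_0 x_1\cdots x_n$; a promising route is to introduce auxiliary quadratic variables so that Kn\"orrer periodicity decouples the $x_0$ direction from the atomic blocks, at the cost of an $A_\infty$--deformation whose classifying map must be computed. On the A--side, the Milnor fibre of a Thom--Sebastiani sum is homotopy equivalent to the join of the individual Milnor fibres, and one requires a Fukaya-categorical Künneth theorem for Milnor fibres compatible with this join structure, phrased in the Liouville sector formalism and with stops chosen to reflect the B--side coupling.

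The principal obstacle is the A--side Künneth step: the wrapped Fukaya category of a join of Weinstein manifolds is not simply a tensor product, and one has to choose a system of stops reflecting the monodromy at infinity of each $\wt_i$ before the decoupling takes place; no such theorem is currently available at the generality required. A secondary, more bookkeeping obstacle is checking that the atomic-by-atomic equivalence intertwines the two Thom--Sebastiani structures at the level of $A_\infty$--functors, not merely on isomorphism classes of objects. The present paper's two-variable result can be viewed as the base case of this program: the atomic polynomials involve at most two variables, the Milnor fibres are Riemann surfaces, and the Künneth step is replaced by an explicit geometric gluing construction of the $\check{V}_{\wt}$ which sidesteps the need for an abstract join theorem.
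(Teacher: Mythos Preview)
The statement you are attempting to prove is not a theorem of the paper; it is Conjecture~\ref{LUConj}, stated as an open problem and attributed to Lekili--Ueda. The paper does not provide a proof of it. What the paper actually proves is Theorem~\ref{main theorem}, the quasi-equivalence $\mathcal{F}(\check{V}_{\check{\w}})\simeq\perf Z_\w$ for invertible polynomials in two variables, which is a consequence one \emph{expects} from Conjecture~\ref{LUConj} in the log general type case (see the discussion around \eqref{PerfFukayaEquiv}). So there is no ``paper's own proof'' to compare your proposal against.

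Your proposal is accordingly not a proof but a research program, and you have already identified its genuine obstructions yourself. The decisive gap is the A--side K\"unneth step: there is no available theorem computing the wrapped Fukaya category of a join of Weinstein domains as a tensor product with stops, and the coupling term $x_0x_1\cdots x_n$ on the B--side does not respect the Thom--Sebastiani decomposition of $\w$, so the Kn\"orrer-periodicity-plus-deformation route you sketch requires controlling an $A_\infty$-deformation whose classifying map you have not computed. The atomic cases in dimension $n>2$ are also not established: Hirano--Ouchi give tilting objects for $\mathrm{mf}(\A^n,\Gamma_\w,\w)$ for chain $\w$, not for $\mathrm{mf}(\A^{n+1},\Gamma_\w,\w+x_0x_1\cdots x_n)$, and the corresponding A--side generation and endomorphism-algebra computations for $\mathcal{W}(\check{V}_{\wt})$ in higher dimensions are not in the literature in the generality you need. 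Your final paragraph correctly observes that the paper's two-variable result sidesteps all of this by working directly with Riemann surfaces, an explicit gluing description of the Milnor fibre, and a moduli-of-$A_\infty$-structures argument rather than any inductive or K\"unneth scheme; but that is precisely why the paper proves Theorem~\ref{main theorem} and not Conjecture~\ref{LUConj}.
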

In the above, $\mathcal{W}(\check{V}_{\wt})$ is the wrapped Fukaya category of the Milnor fibre of $\wt$. This category is completed with respect to cones and direct summands, as stated in Section \ref{convenctions}.  Recently, Conjecture \ref{LUConj}  was established for simple singularities in any dimension in \cite{2020arXiv200407374L}, and has subsequently been established in the $\Z/2$-graded case in \cite{Gammage}. \\

There is a trichotomy of cases depending on whether the weight $d_0$ is positive, negative, or zero. In the log general type case of $d_0>0$, there is a quasi-equivalence
\begin{align}
\mathrm{mf}(\A^{n+1},\Gamma_{\w},\w+x_0x_1\dots x_n)\simeq \coh Z_{\w},
\end{align}
where 
\begin{align}
Z_\w:=\big[\big(\Spec\C[x_0,x_1,\dots,x_n]/(\w+x_0x_1\dots x_{n})\setminus(\boldsymbol{0})\big)/\Gamma_\w\big].
\end{align}
This equivalence is a generalisation of (\cite[Theorem 3.11]{Orlov2009}), where it was proven in the context of triangulated categories, and where $\C^*\simeq \Gamma_{\w}$. The generalisation to the case where $\Gamma_\w$ is a finite extension of $\C^*$ is straightforward, and the extension to the setting of dg-categories was studied in \cite{Shipmandg}, \cite{Isikorlov}, \cite{CaldararuTu}. The main focus of this paper is the case of curves, for which the only invertible polynomial which is not of log general type is $x^2+y^2$. This, however, corresponds to the well-understood HMS statement for $\C^*$. We will therefore restrict ourselves to the log general type case for the remainder of the paper.\\

Recall that, since $Z_\w$ is a proper stack, the subcategory $\perf Z_\w\subseteq \coh Z_\w$ consists precisely of Ext-finite objects, meaning that $X\in \perf Z_\w$ if and only if $\bigoplus_{i\in\Z}\Ext^i(X,Y)$ is finite dimensional for every object $Y\in\coh Z_\w$. On the symplectic side of the correspondence, it is clear that compact Lagrangians can have morphisms in only finitely many degrees with any other Lagrangian, but it is not known that this is necessarily not true for non-compact Lagrangians. This is reasonable to expect, however, and is certainly true in every known case. Therefore, in the log general type case, one expects that Conjecture \ref{LUConj} implies
\begin{align}\label{PerfFukayaEquiv}
\mathcal{F}(\check{V}_{\wt})\simeq\perf Z_{\w}.
\end{align}
Establishing this quasi-equivalence in the case of curves is the main result of this paper. 
\begin{thm}\label{main theorem}
	Let $\w$ be an invertible polynomial in two variables, and $\check{\w}$ its transpose. Then there is a quasi-equivalence 
	\begin{align*}
	\mathcal{F}(\check{V}_{\check{\w}})\simeq\perf Z_\w
	\end{align*}
	of $\Z$-graded pre-triangulated $A_\infty$-categories over $\C$, where $\mathcal{F}(\check{V}_{\check{\w}})$ and $\perf Z_\w$ are as in Section \ref{convenctions}.
\end{thm}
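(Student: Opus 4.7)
The plan is to reduce to an explicit, case-by-case analysis using the Kreuzer--Skarke classification. In two variables, every invertible polynomial is (up to permutation of variables) either Brieskorn--Pham $\w = x^p + y^q$, chain $\w = x^p y + y^q$, or loop $\w = x^p y + y^q x$, so it suffices to handle these three families. For each, I would first describe both sides of the claimed equivalence concretely: the stack $Z_\w$ is a compact nodal stacky curve whose irreducible components are (weighted) projective lines, arranged in a chain or a cycle, with generic stabilisers read off from $\Gamma_\w$ acting on the coordinate axes; and the Milnor fibre $\check{V}_{\check{\w}}$ is an oriented surface with cylindrical ends whose genus and number of ends is read off from the weight system $(\check d_0,\check d_1,\check d_2;\check h)$.

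For each atomic case I would identify a generating collection on each side and compute the endomorphism $A_\infty$-algebras. On the B--side, the nodal stacky curves admit tilting objects built from line bundles on the components together with the pushforwards of the structure sheaves of the nodes, with an explicit quiver-with-relations endomorphism algebra of surface/gentle type. On the A--side, I would choose a distinguished collection of properly embedded Lagrangian arcs together with compact Lagrangian circles (sitting around branch points or core curves) which fill $\check{V}_{\check{\w}}$. Because the $A_\infty$-structure on the Floer algebra of a full arc system on an oriented surface is combinatorially determined by immersed polygon counts and is formal in the relevant sense, the comparison reduces to matching two explicit, gentle-type, graded algebras. The Brieskorn--Pham case can be cross-checked against \cite{Futaki2011} and \cite{HabermannSmith}, and serves as the base of the induction.

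The new ingredient, as signalled in the abstract, is the explicit gluing construction. The idea is to realise $\check{V}_{\check{\w}}$ for a chain or loop (and, more generally, for a Thom--Sebastiani sum) as the symplectic gluing of simpler Milnor fibres along cylindrical ends, in such a way that each gluing on the A--side is mirror to the formation of a node between two components of $Z_\w$ on the B--side. Since $Z_\w$ itself is manifestly a pushout of its irreducible components glued transversely at the nodes, this allows HMS to be bootstrapped: one verifies (i) HMS on each atomic component, (ii) that gluing surfaces along a cylinder corresponds under HMS to gluing stacky $\mathbb{P}^1$'s at a node, and then (iii) assembles the global equivalence via a Mayer--Vietoris / pushout description of both $\mathcal{F}(\check{V}_{\check{\w}})$ and $\perf Z_\w$. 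This strategy also naturally accommodates the torsion factors of $\Gamma_\w$, which on the B--side produce non-trivial generic stabilisers and on the A--side manifest as gradings and orbifold data on the Lagrangian branes.

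The main obstacle is ensuring that the symplectic gluing is fully compatible with the algebraic gluing of stacks at nodes at the level of the $A_\infty$-structure, not only of the underlying quiver. Concretely, one must verify that the Floer data in a neighbourhood of the cylindrical end being glued matches the sheaf data in a formal neighbourhood of the corresponding node (skyscraper at a node versus the arc crossing the cylinder), and that $A_\infty$-products receiving contributions from disks sweeping through the gluing region are correctly reproduced by the pushout of the algebraic mirrors. For pure gentle algebras this reduces to a formality statement, but in the presence of the stacky points coming from the Fermat factors, and the non-trivial generic stabilisers that appear on the irreducible components of $Z_\w$, one has to keep careful track of how these enter the gradings, signs, and $A_\infty$-operations. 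Handling this bookkeeping uniformly across the three families is where the bulk of the technical work should lie.
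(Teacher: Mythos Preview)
Your approach is genuinely different from the paper's, and in its present form it has a real gap.

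The paper does \emph{not} prove Theorem~\ref{main theorem} by matching explicit generators and polygon-counting, nor by a pushout/gluing argument along nodes. The gluing construction of Section~\ref{gluing annuli} is used only to compute topological invariants of the Milnor fibre (genus, boundary components, winding numbers, symplectic cohomology); the HMS proof itself is a deformation-theory argument. Concretely: both sides are split-generated by objects whose cohomology-level endomorphism algebra is the degree~$1$ trivial extension algebra $A$ of the directed algebra $A^\rightarrow$ coming from \cite{HabermannSmith}. The paper then computes $\HH^*(Y_0)\simeq\HH^*(A)$ via equivariant matrix factorisations (Section~\ref{HHcomputation section}), invokes \cite[Theorem 1.6]{2018arXiv180604345L} to identify the affine moduli scheme $\mathcal{U}_\infty(A)$ of $A_\infty$-structures on $A$ with the unfolding space $U_+$ in the untwisted cases, and finally uses the comparison $\HH^*(\mathcal{F}(\check{V}_{\check{\w}}))\simeq\SH^*(\check{V}_{\check{\w}})$ (Theorem~\ref{HHSHisom}) to pin down the unique point of $U_+/\C^*$ corresponding to the A--side. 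The few twisted cases are covered by \cite{Auslanderorders}.

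The gap in your plan is the formality step. You write that ``the comparison reduces to matching two explicit, gentle-type, graded algebras'' because the Floer algebra ``is formal in the relevant sense''. This is false here: the minimal $A_\infty$-structure on $A$ coming from $\mathcal{F}(\check{V}_{\check{\w}})$ is \emph{not} formal (indeed, $\mathcal{A}_0$ is the formal structure and it corresponds to $Y_0$, which is the wrong mirror; see the end of Section~\ref{generators and formality}). So matching the cohomology algebras, or even the underlying quivers with relations, does not suffice; one must identify the higher $\mu^k$, and the whole point of the paper's $\mathcal{U}_\infty(A)$ argument is to do this without computing a single polygon. Relatedly, you propose generating $\mathcal{F}(\check{V}_{\check{\w}})$ by ``properly embedded Lagrangian arcs together with compact Lagrangian circles''; non-compact arcs are objects of the wrapped category $\mathcal{W}(\check{V}_{\check{\w}})$, not of the compact Fukaya category in the statement, and the gentle-algebra/arc-system formality results you have in mind apply to the (partially) wrapped setting. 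For $\mathcal{F}(\check{V}_{\check{\w}})$ the relevant generators are the vanishing cycles, and their endomorphism algebra carries genuine higher products that your outline gives no mechanism to control.
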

\begin{rmk}
It should be reiterated that, although there is a trichotomy of cases depending on the weight $d_0$, all but one invertible polynomials in two variables are of log general type, and this exception is well-understood. We are therefore free to state Theorem \ref{main theorem} in the context of invertible polynomials of log general type without making any assumptions on $d_0$. 
\end{rmk}
The first instances of the quasi-equivalence in \eqref{PerfFukayaEquiv} were established for the case of $\w=x^2+y^3$ in \cite{2012arXiv1211.4632L}, and the cases of $\w=x^3y+y^2$, $\w=x^3+y^3$, and $\w=x^4+y^2$ in \cite{Auslanderorders}. It was also established in \cite{2018arXiv180604345L} for $\w=\sum_{i=1}^n x_i^{n+1}$, and $\w=x_1^2+\sum_{i=2}^n x_i^{2n}$, both for $n>1$. \\

In \cite{Lekili2019}, the authors use mirror symmetry arguments to deduce derived equivalences between rings of certain nodal stacky curves. We elaborate on these arguments in order to identify which Milnor fibres are graded symplectomorphic, and this enables us to deduce derived equivalences between nodal stacky curves with different numbers of irreducible components, some of which have non-trivial generic stabiliser. 
\begin{cor}\label{HMS corollary}
	For each $n\geq 1$, $q \geq 2$ let $\w_{\mathrm{loop}}=x^{n(q-1)+1}y+y^qx$, $\w_{\mathrm{chain}}=x^{nq+1}y+y^q$, each with maximal symmetry group. We then have quasi-equivalences
	\begin{align*}
	\perf Z_{\w_{\mathrm{loop}}}&\simeq   \perf Z_{\w_{\mathrm{chain}}}
	\end{align*}
	of pre-triangulated $A_\infty$-categories over $\C$. Similar, for $n\geq 1$ and $p\geq 2$ or $n\geq p=2$, let $\w_{\mathrm{chain}}'=x^{p}y+y^{n(p-1)}$, $\w_{\mathrm{BP}}=x^{p}+y^{np}$, each with maximal symmetry group. Then, we have quasi equivalences 
	\begin{align*}
	\perf Z_{\w_{\mathrm{chain}}'}&\simeq   \perf Z_{\w_{\mathrm{BP}}}
	\end{align*}
	of pre-triangulated $A_\infty$-categories over $\C$.
\end{cor}
This is obtained by first proving that the Milnor fibres corresponding to the relevant Berglund--H\"ubsch transposes are graded symplectomorphic. This implies that their Fukaya categories are quasi-equivalent, which by Theorem \ref{main theorem} proves that the derived categories of perfect complexes of their mirrors are too. This corollary also appears as a special case of \cite[Corollary 5.15]{FAVERO2019943}, although was obtained there by a variation of GIT argument (\cite{BFKVGIT}, \cite{HLVGIT}).
\subsection{Strategy of proof}\label{IntroStrategy}
Our strategy follows that of \cite{2018arXiv180604345L}, where one reduces the proof of Theorem \ref{main theorem} to a deformation theory argument. For the case at hand, this approach is predicated on the proof of Conjecture \ref{BH conj} for curves given in \cite{HabermannSmith}.\\

On the A--side of the correspondence, we have that there is a restriction functor
\begin{align}\label{FSRestriction}
\begin{split}
\mathcal{F}(\wt)&\rightarrow \mathcal{F}(\check{V}_{\wt})\\
S^{\rightarrow}&\mapsto \partial S^{\rightarrow}=:S,
\end{split}
\end{align} 
where we equip the vanishing cycle $\partial S^{\rightarrow}$ with the induced (non-trivial) spin structure. Suppose that $(S_i^\rightarrow)_{i=1}^{\check{\mu}}$ is a collection of thimbles which generates $\mathcal{F}(\check{\w})$, where $\check{\mu}$ is the Milnor number of $\wt$, and that $\mathcal{S}^\rightarrow$ is the full subcategory of $\mathcal{F}(\check{\w})$ whose objects are $(S_i^\rightarrow)_{i=1}^{\check{\mu}}$. Denote its $A_\infty$-endomorphism algebra by
\begin{align}\label{DirectedAlg}
\mathcal{A}^\rightarrow:=\bigoplus _{i,j}^{\check{\mu}}\hom_{\mathcal{F}(\check{\w})}(S_i^\rightarrow,S_j^\rightarrow),
\end{align}
and its cohomology algebra $A^{\rightarrow}:=H^*(\mathcal{A}^{\rightarrow})$. Correspondingly, let $\mathcal{S}$ be the collection $(S_i)_{i=1}^{\check{\mu}}$ of vanishing cycles equipped with the non-trivial spin structure, considered as a full subcategory of the compact Fukaya category of the Milnor fibre, and $\mathcal{A}$ its $A_\infty$-endomorphism algebra. Poincar\'e duality tells us that we can identify $H^*(\mathcal{A})$  with
\begin{align}\label{TEA VS}
A:=A^\rightarrow\oplus(A^\rightarrow)^\vee[1-n]
\end{align}
 as a vector space. In our case, we will deduce in Section \ref{generators and formality} that the algebra structure on $A$ is induced purely from the $A^\rightarrow$--bimodule structure of $(A^\rightarrow)^\vee[1-n]$. Namely, we have
\begin{align}\label{TEA multiplication}
(a,f)\cdot (b,g)=(ab,ag+fb).
\end{align}
This is known as a trivial extension algebra of degree $n-1$. By the argument of \cite[Lemma 5.4]{HMSQuartic}, when the weight $\check{d}_{0}\neq0$, $\mathcal{S}$ split generates the compact Fukaya category of the Milnor fibre. Therefore, in order to characterise this category, it is sufficient to identify the $A_\infty$-structure on $A$ which is given by $\mathcal{A}$, up to gauge transformation (a.k.a. formal diffeomorphism).\\
 
On the algebro-geometric side of the correspondence, one can consider the Jacobi algebra, 
\begin{align}\label{Jac alg}
\text{Jac}_{\w}=\C[x_1,\dots,x_n]/(\partial_1\w,\dots,\partial_n\w).
\end{align}
Since the singularity is isolated, this algebra has dimension $\mu<\infty$, the Milnor number of $\w$. Let $J_\w$ be the set of exponents for a basis of this algebra, and consider the semi-universal unfoldings of $\w$,
\begin{align}\label{semiuniv unfolding}
\widetilde{\w}:=\w+\sum_{\textbf{j}\in J_\w}u_{\textbf{j}}x_1^{j_1}\dots x_n^{j_n}.
\end{align}
Such unfoldings are universal in the sense that every other unfolding of $\w$ is induced from $\widetilde{\w}$ by a change of coordinates; however, this change of coordinates is not unique. These semi-universal unfoldings are parametrised by $\mu$ complex parameters, and we set 
\begin{align}\label{UDef}
U:=\Spec\C[u_1,\dots, u_\mu].
\end{align}
 We can therefore consider $\widetilde{\w}$ as a map
\begin{align}\label{seminuniv unfolding II}
\widetilde{\w}:\A^n\times U\rightarrow\A^1,
\end{align}
and define 
\begin{align}\label{wuDef}
\w_u:=\widetilde{\w}|_{\A^n\times\{u\}}.
\end{align}
 To such a polynomial $\w_u$, we associate a stack $V_u$, defined in the case of two variables in \eqref{OpenSubstackdefn}. In the case where the weight $d_{0}>0$, we want to compactify $V_u$ to a Calabi--Yau hypersurface in a quotient of weighted projective space by a finite group, although this is not possible for every $u\in U$. As previously mentioned, we extend the action of $\Gamma_{\w}$ to $\A^{n+1}$ in a prescribed way, and define $U_+\subseteq U$ to be the subspace such that $\w_u$ can be quasi-homogenised to $\W_u\in  \C[x_0,x_1,\dots,x_n]$ with respect to this action. Following \cite{2018arXiv180604345L}, one then defines
\begin{align}\label{Yu defn}
Y_u:=\big[(\W_u^{-1}(0)\setminus(\textbf{0}))/\Gamma_{\w}\big]
\end{align} 
for each $u\in U_+$. It goes back to the work of Pinkham (\cite{pinkham1974deformations}), that the fact that $\w$ is quasi-homogeneous forces there to be a $\C^*$-action on $U_+$. We therefore have that $Y_u\simeq Y_v$ if and only if $v=t\cdot u$ for some $t\in \C^*$. By construction, the dualising sheaf of this stack is trivial and it is a compactification of $V_u$.\\

For each $u\in U_+$, there is a functor
\begin{align}\label{intro pushforward functor}
\mf(\A^n,\Gamma_\w,\w)\rightarrow \coh Y_u
\end{align}
which is to be expounded upon in Section \ref{background section} for the case of curves. In any case where $\mf(\A^n,\Gamma_\w,\w)$ has a tilting object, $\mathcal{E}$, denote by $\mathcal{S}_u$ the image of $\mathcal{E}$ by \eqref{intro pushforward functor}. It is then a theorem of Lekili and Ueda (\cite[Theorem 4.1]{2018arXiv180604345L}) that $\mathcal{S}_u$ split-generates $\perf Y_u$. Let $\mathcal{A}_u$ be the minimal $A_\infty$-endomorphism algebra of $\mathcal{S}_u$. Then, by the work of Ueda in \cite{Hyperplanesections}, we have that $A_u:=H^*(\mathcal{A}_u)$ is also given by the degree $n-1$ trivial extension algebra of the endomorphism algebra of $\mathcal{E}$, and is, in particular, independent of $u$. In the case where Conjecture \ref{BH conj} is solved by exactly matching generators, as in \cite{HabermannSmith}, we have that, at the level of cohomology, the endomorphism algebra of the generators on both the A--, and B--sides are given by the same algebra, which we denote by $A$.  In light of this, establishing the equivalence \eqref{PerfFukayaEquiv} boils down to identifying the $A_\infty$-structure given by the chain level endomorphism algebra on the B--side which matches with that of the A--side. With this perspective, homological mirror symmetry for invertible polynomials turns into a deformation theory problem.\\

Recall that for a graded algebra, $A$, the Hochschild cochain complex has a bigrading. Namely, we consider $\text{CC}^{r+s}(A,A)_s$ to be the space of maps $A^{\otimes r}\rightarrow A[s]$. In general, if $\mu^\bullet$ is a minimal $A_\infty$-structure on $A$, then deformations which keep $\mu^k$ for $1\leq k\leq m$ fixed are controlled by $\bigoplus_{i>m-2} \HH^2(A)_{-i}$ (see, for example, \cite[Section 3a]{HMSQuartic}). In particular, the deformations of $A$ to a minimal $A_\infty$-model with prescribed $\mu^2$ are controlled by $\HH^2(A)_{<0}=\bigoplus_{i>1}\HH^2(A)_{-i}$. Furthermore, note that $\HH^2(A)_0$ is the first order deformations of the algebra structure on $A$. It is natural to consider the functor which takes an algebra to the set of gauge equivalence classes of $A_\infty$-structures on that algebra. It is a theorem of Polishchuk (\cite[Corollary 3.2.5]{polishchuk2017}) that if $\HH^1(A)_{<0}=0$, then this functor is represented by an affine scheme, $\mathcal{U}_\infty(A)$. Moreover, if $\dim \HH^2(A)_{<0}<\infty$, then \cite[Corollary 3.2.6]{polishchuk2017} shows that this scheme is of finite type. This functor was first studied in the context of homological mirror symmetry in \cite{LekiliPolishchuk}. There is a natural $\C^*$-action on $\mathcal{U}_\infty(A)$ given by sending $\{\mu^k\}_{k=1}^\infty$ to $\{t^{k-2}\mu^k\}_{k=1}^\infty$, and this is denoted by $\mathcal{A}\mapsto t^*\mathcal{A}$. Note that the formal $A_\infty$-structure is the fixed point of this action. For each $t\neq 0$, we have that $\mathcal{A}$ and $t^*\mathcal{A}$ are quasi-isomorphic, although not through a gauge transformation (\cite[Section 3]{HMSQuartic}). \\

Now, for each $u\in U_+$, we have that $\mathcal{A}_u$ defines an $A_\infty$-structure on $A$ with $\mu^2$ given as in \eqref{TEA multiplication}. Therefore, it defines a point in $\mathcal{U}_\infty(A)$, and so we get a map
\begin{align}\label{U+ModuliMap}
U_+\rightarrow \mathcal{U}_\infty(A).
\end{align}
If we can show that \eqref{U+ModuliMap} is an isomorphism, then we know that every $A_\infty$-structure on $A$ is realised as the $A_\infty$-endomorphism algebra of $\mathcal{S}_u$ for some $u\in U_+$.  In the case that the pair $(\w,\Gamma)$ is untwisted (see Definition \ref{twisting}), we have by a theorem of Lekili and Ueda (\cite[Theorem 1.6]{2018arXiv180604345L}) that there is a $\C^*$-equivariant isomorphism of affine varieties $U_+\xrightarrow{\sim}\mathcal{U}_\infty(A)$ which sends the origin to the formal $A_\infty$-structure. By removing the fixed point of the action on both sides, we have that this isomorphism descends to an isomorphism 
\begin{align}
\big(U_+\setminus(\boldsymbol{0})\big)/\C^*\xrightarrow{\sim} \big(\mathcal{U}_\infty(A)\setminus(\boldsymbol{0})\big)/\C^*=:\mathcal{M}_{\infty}(A).
\end{align}
Therefore, in the case where $\w$ is untwisted, we have that, up to scaling, there is some $u\in U_+$ for which \eqref{PerfFukayaEquiv} holds.\\

We end this section by briefly remarking that the moduli of $A_\infty$-structures argument employed in this paper fits into a broader framework which has proven to be a fruitful approach to HMS, and whose scope is more wide-reaching than that of invertible polynomials. In \cite{DehnSurgery} and\cite{2012arXiv1211.4632L}, the authors establish HMS for the once punctured torus by studying the moduli space of $A_\infty$-structures on the degree one trivial extension algebra of the $A_2$ quiver. Interestingly, it was proven that $\mathcal{M}_{\infty}(A)\simeq \overline{\mathcal{M}}_{1,1}$, the moduli space of elliptic curves. Further connection was made to the moduli theory of curves in \cite{Lekili2017AMC}, where the authors show the moduli space of $A_\infty$-structures on a particular algebra coincides with the modular compactification of genus $1$ curves with $n$ marked points, as constructed in \cite{Smyth}. This then leads them to prove homological mirror symmetry for the $n$-punctured torus in \cite{LekiliPolishchuk}.
\subsection{Structure of paper} In Section 2, we recall some basic facts about invertible polynomials in two variables, as well as compute $U_+$ in the relevant cases. In Section 3, we study the symplectic topology of the Milnor fibre. In Section 4, we compute the relevant Hochschild cohomology for invertible polynomials in two variables. In Section 5, we recall some facts about generators and formality for Fukaya categories and the proper algebraic stacks under consideration. Section 6 is then a proof of Theorem \ref{main theorem} and Corollary \ref{HMS corollary}.

\subsection{Conventions}\label{convenctions} Throughout this paper all Fukaya categories will be completed with respect to cones and direct summands. We will also denote the bounded derived category of coherent sheaves, its full subcategory consisting of perfect complexes, and the unbounded derived category of quasi-coherent sheaves on an algebraic stack $X$ by $\coh X$, $\perf X$, and $\Qcoh X$, respectively. For a dg-category $\mathcal{A}$, we will also denote the unbounded derived category of right dg-modules as $\text{Mod}\, \mathcal{A}$. All coefficient groups will be taken to be $\Z$ unless stated otherwise. By $\Z_n$ we mean $\Z/n\Z$, and by $\Z_{(2)}$ we mean the local ring of rational numbers with odd denominator. 
\subsection{Acknowledgements} The author would like to thank his Ph.D. supervisor Yank\i\ 
Lekili for suggesting the project, his guidance throughout, and for careful reading of
previous versions of this paper. He would also like to thank Jack Smith for his interest
in the project, and his valuable feedback. The author is grateful to the anonymous
referee for their helpful feedback and suggestions. This work was supported by the
Engineering and Physical Sciences Research Council [EP/L015234/1], The EPSRC
Centre for Doctoral Training in Geometry and Number Theory (The London School
of Geometry and Number Theory), University College London.

 \section{Invertible polynomials in two variables}\label{background section}

In this paper, we will focus on invertible polynomials in two variables, as well as their unfoldings and quasi-homogenisations. As such, we will restrict ourselves to this case in the rest of the paper, and consider the variables $x, y, z$. The purpose of this section is to give a self-contained overview of the required background on invertible polynomials in two variables, and then to calculate the relevant spaces of semi-universal unfoldings.  \\

As in the introduction, let $A=\begin{pmatrix}
i_1 & j_1\\
i_2 & j_2
\end{pmatrix}$ be a matrix with non-negative integer coefficients such that $\det A\neq 0$, and 
\begin{align*}
\w(x,y)=x^{i_1}y^{j_1}+x^{i_2}y^{j_2}
\end{align*}
the corresponding polynomial, with weight system $(d_0,d_1,d_2; h)$. Denote its Berglund--H\"ubsch transpose by $\check{\w}$, with corresponding weight system $(\check{d}_0,\check{d}_1,\check{d}_2;\check{h})$.  We will always assume that $\gcd(d_1,d_2,h)=\gcd(\check{d}_1,\check{d}_2,\check{h})=1$. Note that $d_0>0$ if and only if $\check{d}_0>0$.
\begin{definition}\label{invertible poly}
Let $A$ be a $2\times 2$ matrix with non-negative integer coefficients. Let $\w$  and $\check{\w}$ be as above. We call $\w$ an \emph{invertible polynomial} if $A$ is invertible over $\mathbb{Q}$, and $\w$ and $\check{\w}$ both have isolated singularities at the origin. 
\end{definition}
In what follows, we will always assume that $p$ and $q$ are always at least 2. For Brieskorn--Pham and chain polynomials, this is necessary for the origin to be a critical point of both $\w$ and $\check{\w}$. In the loop case, if one of $p$ or $q$ is 1, then one can see that $\w$ and $\check{\w}$ are equivalent to $x^2+y^2$ and $\check{x}^2+\check{y}^2$ by a change of variables. \\

The maximal symmetry group is defined as in \eqref{MaxSymGp}, and to each $t_i$ we associate a character given by 
\begin{align}
(t_1,t_2,t_3)\mapsto t_i.
\end{align}
The group of characters for $\Gamma_\w$ is given by 
\begin{align}\label{MaxSymGpCharacters}
\hat{\Gamma}_\w:=\big(\Z\chi_1\oplus\Z\chi_2\oplus\Z\chi_{3})/(i_k\chi_1+j_k\chi_2-\chi_{3})_{k\in\{1,2\}}.
\end{align}
Let $\chi_{\w}:=\chi_{3}$, so that the elements of $\Gamma_{\w}$ are the diagonal transformations of $\A^2$ which keep $\w$ semi-invariant with respect to $\chi_{\w}$,
\begin{align*}
\w(t_1x,t_2y)=\chi_{\w}(t_1,t_2)\w(x,y).
\end{align*}
The subgroup $\ker\chi_{\w}$ of $\Gamma_{\w}$ are those elements which keep $\w$ invariant, and this is called the \emph{maximal diagonal symmetry group}. There is an injective map 
\begin{align}\begin{split}
\phi:\C^*&\rightarrow \Gamma_\w\\
t&\mapsto (t^{d_1},t^{d_2}),
\end{split}
\end{align}
and this fits into the short exact sequence
\begin{align}
1\rightarrow\C^*\xrightarrow{\phi}\Gamma_\w\rightarrow \text{ker}\ \chi_\w /\langle j_\w\rangle\rightarrow 1,
\end{align}
where $j_\w$ generates the cyclic group $\text{im}(\phi)\cap\ker\chi_{\w}$, and is called the grading element. Let $\Gamma\subseteq \Gamma_\w$ be a subgroup of finite index containing $\phi(\C^*)$, and for each $\Gamma$ we denote $\chi:=\chi_{\w}|_{\Gamma}$. The statements of Theorem \ref{main theorem} and Corollary \ref{HMS corollary} require $\Gamma=\Gamma_\w$, since this avoids the problem of needing `orbifold Fukaya(--Seidel) categories', as described in the introduction. Nevertheless, we will use $\Gamma$ when what we say is valid for any $\Gamma\subseteq \Gamma_{\w}$, and $\Gamma_{\w}$ when we specifically mean the maximal symmetry group. \\

The Jacobi algebra of $\w$ with Milnor number $\mu$ is given in \eqref{Jac alg}. Let $J_\w$ be as in the introduction, and semi-universal unfoldings of $\w$ be as in \eqref{semiuniv unfolding}.  Let $U$ and $\w_u$ be are as in \eqref{UDef} and \eqref{wuDef}, respectively. As already noted, Pinkham (\cite{pinkham1974deformations}) observed that $\w$ being quasi-homogeneous means that the space $U$ comes with a natural $\C^*$-action on it. Namely, the action on $u_{ij}$ is given by $t\cdot u_{ij}=t^{h-d_1i-d_2j}u_{ij}$. For a fixed $u\in U$, define $\overline{R}_u:=\C[x,y]/(\w_u)$, and observe that by scaling $x,y$, one can identify $\overline{R}_u\simeq \overline{R}_{t\cdot u}$ for $t\in \C^*$. The origin is the only fixed point of this action.\\

For a fixed $\Gamma\subseteq\Gamma_\w$, we would like to quasi-homogenise $\w_u$. In order to do this, however, we will need to extend the action of $\Gamma$ to $\A^3$. The action on the $z$ variable is chosen by setting
\begin{align}\label{t0 weight}
\chi_0(t_1,t_2)=\chi(t_1,t_2)t_1^{-1} t_2^{-1}.
\end{align}
This is done precisely so that $x^\vee\wedge y^\vee\wedge z^\vee$ is isomorphic to $\chi$ as a $\Gamma$-module. With this weight, we want to restrict ourselves to the subspace $U_+\subseteq U$ for which $\w_u$ is quasi-homogenisable, and has only positive powers of $z$. We define $U_+$ to be the subset of $u_{ij}$ in $U$ which can be non-zero only if there exists a positive integer $w_{ij}$ such that
\begin{align}\label{admissible criterion}
\chi^{w_{ij}-1}=t_1^{w_{ij}-i}t_2^{w_{ij}-j},
\end{align}
and consider $\W_u$ to be the quasi-homogenisation of $\w_u$ for each $u\in U_+$. Let $J_+\subseteq J_{\w}$ be the subset satisfying this condition.\\

For a fixed $u\in U_+$, we set $R_u:=\C[x,y,z]/(\W_u)$. By an abuse of notation, we will also denote the pullback of $\w$ to $\A^{3}$ by $\w$. We have that $Y_u$ is defined as in \eqref{Yu defn}, and each $Y_u$ is the compactification of
\begin{align}\label{OpenSubstackdefn}
V_u:=\big[\big(\Spec\overline{R}_u\setminus(\boldsymbol{0})\big)\big/\ker\chi_0\big],
\end{align}
and the divisor at infinity $X_u=Y_u\setminus V_u$ is isomorphic to $X=\big[\big(\Spec\overline{R}_0\setminus(\boldsymbol{0})\big)\big/\Gamma\big]$ for each $u\in U_+$. The condition $d_0>0$ ensures that each $Y_u$ is a proper stack. \\

These $\W_u$ fit together to form a family
\begin{align*}
\W_+:=\w(x,y)+\sum_{(i,j)\in J_+}u_{ij}x^{i}y^{j}z^{w_{ij}}:\A^3\times U_+\rightarrow \A^1
\end{align*}
such that $\W_u:=\W_+|_{\A^{3}\times \{u\}}$. Following \cite{2018arXiv180604345L}, we can then define 
\begin{align*}
\mathcal{Y}:=\big[\big(\W_+^{-1}(0)\setminus\big(\textbf{0}\times U_+\big)\big)\big/\Gamma\big],
\end{align*}
and this gives us a family
\begin{align*}
\pi_{\mathcal{Y}}:\mathcal{Y}\rightarrow U_+
\end{align*}
of stacks over $U_+$ such that $\pi_{\mathcal{Y}}^{-1}(u)=Y_u$ for each $u\in U_+$. Note that since each fibre is the compactification of $V_u$ by $X$, and $V_u\simeq V_{t\cdot u}$ for $t\in \C^*$, we have that the fibres above points in the same $\C^*$-orbit of $U_+$ are isomorphic. Furthermore, the relative dualising sheaf of this family is $\Gamma$-equivariantly trivial, by construction, and since $d_0>0$, this trivialisation is unique up to scaling.  \\

The map $R_u\rightarrow R_u/(z)\simeq\overline{R}_0$ induces a pushforward functor 
\begin{align}\label{PushforwardFunctor}
\mf(\A^2,\Gamma,\w)\rightarrow\mf(\A^3,\Gamma,\W_u)
\end{align}
obtained by considering the 2-periodic free resolution of an $\overline{R}_0$--module, and replacing each free $\overline{R}_0$ module with the $R_u$--free resolution 
\begin{align*}
0\rightarrow R_u(-\vec{z})\xrightarrow{z}R_u\rightarrow \overline{R}_0\rightarrow 0. 
\end{align*}
This is explained in detail, and in far greater generality, in \cite[Section 3]{Hyperplanesections}. \\

For the quotient stack $Y_u$, since the dualising sheaf of $Y_u$ is trivial for each $u\in U_+$, we have the Orlov equivalence
\begin{align}\label{coh mf quasi equiv}
\mf(\A^{3},\Gamma,\W_u)\simeq \coh Y_u.
\end{align}
The composition of \eqref{PushforwardFunctor} and Orlov equivalence gives the functor \eqref{intro pushforward functor}.

 \subsection{Unfoldings of loop polynomials}\label{UnfoldingsofLoopPolys}
 In the case of a two variable loop polynomial\\ $\w=x^py+y^qx$, we have $\mu=pq$, and 
 \begin{align}\label{LoopWeights}
 (d_1,d_2;h)=(\frac{q-1}{d},\frac{p-1}{d};\frac{pq-1}{d}),
 \end{align}
 where $d:=\gcd(p-1,q-1)$. Without loss of generality, we can assume that $p\geq q$. One has that 
 \begin{align}\label{Jac loop}
 \Jac_\w=\text{span}\{1,x,\dots,x^{p-1}\}\otimes\text{span}\{1,y,\dots,y^{q-1}\},
 \end{align}
 and
 \begin{align}
 \begin{split}\label{LoopMaxSymGp}
 \Gamma_{\w}=\big\{(t_1,t_2)\in(\C^*)^2|\  t_1^pt_2=t_2^qt_1\big\}&\xrightarrow{\sim}\C^*\times{\mu}_d\\
 (t_1,t_2)&\mapsto (t_1^nt_2^m,t_1^{\frac{p-1}{d}}t_2^{-\frac{q-1}{d}}),
  \end{split}
 \end{align}
 where $m,n$ is a fixed solution to
 \begin{align}\label{LoopBezout}
 m(p-1)+n(q-1)=d.
 \end{align}
 The image of the injective homomorphism 
 \begin{align*}
 \phi:\C^*&\rightarrow \Gamma_{\w}\\
 t&\mapsto (t^{\frac{q-1}{d}},t^{\frac{p-1}{d}})
 \end{align*}
 is an index $d$ subgroup of $\Gamma_\w$; however, we will only be interested in the maximal symmetry group, i.e. $\Gamma=\Gamma_\w$. A semi-universal unfolding is given by
 \begin{align}\label{LoopUnfolding}
 \widetilde{\w}(x,y)=x^py+y^qx+\sum_{\substack{0\leq i\leq p-1\\
 		0\leq j\leq q-1}}u_{ij}x^iy^j.
 \end{align}
By definition, $U_+$ is the subspace of $U$ containing elements such that there exists a positive integer $w_{ij}$ such that 
 \begin{align*}
 (t_1^pt_2)^{w_{ij}-1}=t_1^{w_{ij}-i}t_2^{w_{ij}-j}.
 \end{align*}
 There are three possibilities for $U_+$:\\
 
 {Case I:}
 For $q>2$ the only solution to this is $i=j=w_{ij}=1$, and so $U_+=\Spec\C[u_{11}]=\A^1$.\\
 
 {Case II:} $p>q=2$, we have $i=j=w_{ij}=1$, as well as $j=0$, $i=1$, and $w_{ij}=2$, and so $U_+=\Spec\C[u_{1,0},u_{1,1}]=\A^2$. \\ 
 
 {Case III:} When $p=q=2$, we have $i=j=w_{ij}=1$, $j=0$, $i=1$, $w_{ij}=2$, $j=1$, $i=0$, $w_{ij}=2$, as well as $i=j=0$, $w_{ij}=3$, and so $U_+=\Spec\C[u_{0,0},u_{1,0}, u_{0,1},u_{1,1}]=\A^4$.
 \subsection{Unfoldings of chain polynomials}\label{UnfoldingsofChainPolys}
 In the case of a two variable chain polynomial\\ $\w=x^py+y^q$, we have $\mu=pq-q+1$, and 
 \begin{align}\label{ChainWeights}
 (d_1,d_2;h)=(\frac{q-1}{d},\frac{p}{d};\frac{pq}{d}),
 \end{align}
 where $d:=\gcd(p,q-1)$. 
 \begin{rmk}
 	It should be stressed that this is the Milnor number on the \textit{B--side}. In the loop and Brieskorn--Pham cases the matrices defining the polynomials are symmetric, and the Milnor numbers of both sides will be the same, but this is not the case for chain polynomials. 
 \end{rmk}
 One has that 
 \begin{align}\label{Jac chain}
 \Jac_\w=\text{span}\{1,x,\dots,x^{p-2}\}\otimes\text{span}\{1,y,\dots,y^{q-1}\}\oplus\text{span}\{x^{p-1}\},
 \end{align}
 and
 \begin{align}
 \begin{split}\label{ChainMaxSymGp}
 \Gamma_{\w}=\big\{(t_1,t_2)\in(\C^*)^2|\  t_1^pt_2=t_2^q\big\}&\xrightarrow{\sim}\C^*\times{\mu}_d\\
 (t_1,t_2)&\mapsto (t_1^nt_2^m,t_1^{\frac{p}{d}}t_2^{-\frac{q-1}{d}}),
 \end{split}
 \end{align}
 where $m,n$ is a fixed solution to
 \begin{align}\label{ChainBezout}
 mp+n(q-1)=d.
 \end{align}
The image of the injective homomorphism 
 \begin{align*}
 \phi:\C^*&\rightarrow \Gamma_{\w}\\
 t&\mapsto (t^{\frac{q-1}{d}},t^{\frac{p}{d}}) 
 \end{align*}
 is an index $d$ subgroup of $\Gamma_\w$, but again we will only be interested in the maximal symmetry group. A semi-universal unfolding is given by
 \begin{align}\label{ChainUnfolding}
 \widetilde{\w}(x,y)=x^py+y^q+\sum_{\substack{0\leq i\leq p-2\\
 		0\leq j\leq q-1}}u_{ij}x^iy^j+u_{p-1,0}x^{p-1}.
 \end{align}
 By definition, $U_+$ is the subspace of $U$ containing elements such that there exists a positive integer $w_{ij}$ such that 
 \begin{align*}
 (t_1^pt_2)^{w_{ij}-1}=t_1^{w_{ij}-i}t_2^{w_{ij}-j}.
 \end{align*}
 For chain polynomials, there are five different cases of $U_+$ to consider:\\ 
 
 {Case I:} When $p,q>2$, the only solution is $i=j=w_{ij}=1$, and so $U_+=\Spec\C[u_{1,1}]=\A^1$. \\
 
 {Case II:} In the case where $p=2,\ q>2$ the only solution is $i=0$, $j=1$, $w_{ij}=2$, and so $U_+=\Spec\C[u_{0,1}]=\A^1$. \\
 
 {Case III:} In the case where $q=2, p>3$, we have $i=j=w_{ij}=1$, as well as $j=0$, $i=2$, and $w_{ij}=2$, and so $U_+=\Spec\C[u_{1,1},u_{2,0}]=\A^2$. \\
 
 {Case IV:} When $p=3,q=2$, we have $i=j=w_{ij}=1,\ j=0, i=2,\ w_{ij}=2,$ and $i=j=0, w_{ij}=3$, so $U_+=\Spec\C[u_{0,0},u_{1,1}, u_{2,0}]=\A^3$. \\
 
 {Case V:} In the case when $p=q=2$, we have $j=0$, $i=1$, $w_{ij}=3$, as well as $i=j=0$, $w_{ij}=4$, and $i=0,\ j=1,$ and $w_{ij}=2$, and so $U_+=\Spec\C[u_{0,0},u_{1,0},u_{0,1}]=\A^3$. 
 
 \subsection{Unfoldings of Brieskorn--Pham polynomials}\label{UnfoldingsofBPPolys}
 In the case of a two variable Brieskorn--Pham polynomial $\w=x^p+y^q$, we have $\mu=(p-1)(q-1)$, and 
 \begin{align}\label{BPWeights}
 (d_1,d_2;h)=(\frac{q}{d},\frac{p}{d}; \frac{pq}{d}),
 \end{align}
 where $d:=\gcd(p,q)$. One has that 
 \begin{align}\label{Jac BP}
 \Jac_\w=\text{span}\{1,x,\dots,x^{p-2}\}\otimes\text{span}\{1,y,\dots,y^{q-2}\},
 \end{align}
 and
 \begin{align}
 \begin{split}\label{BPMaxSymGp}
 \Gamma_{\w}=\big\{(t_1,t_2)\in(\C^*)^2|\ t_1^p=t_2^q\big\}&\xrightarrow{\sim}\C^*\times{\mu}_d\\
 (t_1,t_2)&\mapsto (t_1^nt_2^m,t_1^{\frac{p}{d}}t_2^{-\frac{q}{d}}),
 \end{split}
 \end{align}
 where $m,n$ is a fixed solution to
 \begin{align}\label{BPBezout}
 mp+nq=d.
 \end{align}
The image of the injective homomorphism 
 \begin{align*}
 \phi:\C^*&\rightarrow \Gamma_{\w}\\
 t&\mapsto (t^{\frac{q}{d}},t^{\frac{p}{d}})
 \end{align*}
 is an index $d$ subgroup of $\Gamma_\w$, but as in the loop and chain cases, we are only interested in the maximal symmetry group. A semi-universal unfolding is given by
 \begin{align}\label{BPUnfolding}
 \widetilde{\w}(x,y)=x^p+y^q+\sum_{\substack{0\leq i\leq p-2\\
 		0\leq j\leq q-2}}u_{ij}x^iy^j.
 \end{align}
 By definition, $U_+$ is the subspace of $U$ containing elements such that there exists a positive integer $w_{ij}$ such that 
 \begin{align*}
 (t_1^p)^{w_{ij}-1}=t_1^{w_{ij}-i}t_2^{w_{ij}-j}.
 \end{align*}
 For Brieskorn--Pham polynomials, we have the following five cases:\\
 
 {Case I:} In the case $p\geq q>3$, the only solution is $i=j=w_{ij}=1$, and so $U_+=\Spec\C[u_{1,1}]=\A^1$. \\
 
 {Case II:} In the case where $p=3$ and $q=2$, we have $i=1$, $j=0$ and $w_{ij}=4$, as well as $i=j=0$ and $w_{ij}=6$, so $U_+=\Spec\C[u_{0,0}, u_{1,0}]=\A^2$.\\
 
 {Case III:} In the case when $p=q=3$, we have $i=j=w_{ij}=1$, as well as $i=j=0$, $w_{ij}=3$, and so $U_+=\Spec \C[u_{0,0},u_{1,1}]=\A^2$. \\
 
 {Case IV:} In the case where $p=4$, $q=2$, we have $j=0$, $i=2$, $w_{ij}=2$, and $i=j=0$, $w_{ij}=4$. Therefore $U_+=\Spec \C[u_{0,0},u_{2,0}]=\A^2$. \\
 
 {Case V:} In the case where $p>4$ and $q=2$, we have $i=w_{ij}=2$ and $j=0$, so $U_+=\Spec\C [u_{2,0}]=\A^1$.

\section{Symplectic topology of the Milnor fibre}
Let $\Sigma$ be a smooth, compact, orientated surface of genus $g>0$ with $b>0$ connected boundary components $\partial\Sigma=\sqcup_{i=1}^{b}\partial_i\Sigma$. The surface to have in mind is the Milnor fibre of an invertible polynomial, $\check{V}_{\check{\w}}$. Note that by an abuse of notation, we will not distinguish between the Milnor fibre and its completion, since what we mean will be clear from context. 
\subsection{Graded symplectomorphisms}\label{graded symplecto section} In this subsection, we recall some facts about graded symplectic surfaces with the goal of providing a self-contained summary of Lemma \ref{graded homeo}. This provides criteria to ascertain when two graded symplectic surfaces are graded symplectomorphic, and is the key step in establishing Corollary \ref{HMS corollary}.\\

For a $2n$-dimensional symplectic manifold, $(X,\omega)$, there is a natural Lagrangian Grassmannian bundle $\text{LGr}(TX)\rightarrow X$, whose fibre at $x\in X$ is the Grassmannian of Lagrangian $n-$planes in $T_xX$.   Recall (\cite{SeidelBook}, \cite{BSMF_2000__128_1_103_0}) that we say $(X,\omega)$ is $\Z$-gradeable if it admits a lift to $\widetilde{\text{LGr}}(TX)$, the fibrewise universal cover of the Lagrangian Grassmannian bundle. This is possible if and only if $2c_1(X)=0$ in $H^2(X)$, and this implies that $K_X^{\otimes2}$, the square of the canonical bundle, is trivial. If $X$ is gradeable, then a grading is given by a choice of homotopy class of trivialisation of $K_X^{\otimes2}$. For a trivialising section $\Theta\in \Gamma(X,K_X^{\otimes2})$, one has a map 
\begin{align*}
\alpha_X: \text{LGr}(TX)&\rightarrow S^1\\
L_x\mapsto &\arg(\Theta|_{L_x}).
\end{align*}
Given a compact, exact Lagrangian submanifold, $L$, this defines a section of $\text{LGr}(TX)$ by considering the tangent space to $L$ at each point. We say that $L$ is gradeable with respect to a grading on $X$ if there exists a function $\alpha_X^\#:L\rightarrow \R$ such that $\exp(2\pi i\alpha_X^\#(x))=\alpha_X(T_xL)$. This is possible if and only if the Maslov class of $L$ vanishes, where the Maslov class is defined by the homotopy class of the map $L\rightarrow \text{LGr}(TX)\xrightarrow{\alpha_X} S^1$.\\

As explained in \cite[Section 13(c)]{SeidelBook}, on a (real) 2-dimensional surface, $\Sigma$, gradings correspond to trivialisations of the real projectivised tangent bundle, $\P_\R(T\Sigma)\simeq \text{LGr}(T\Sigma)$. Recall that a line field is a section of $\P_\R(T\Sigma)$. Supposing that a grading of $\Sigma$ is chosen such that $\alpha_\Sigma$ is as above, then one can define a line field on the surface given by $\eta=\alpha_\Sigma^{-1}(1)$. Conversely, a nowhere vanishing line field gives rise to a map $\alpha_\Sigma$ by recording the anticlockwise angle between the line field and any other line in the tangent plane. In this way, line fields correspond naturally to gradings on a surface, $\Sigma$.\\

Given a line field, $\eta$, which grades $\Sigma$, and a Lagrangian, $L$, represented by an embedded curve $\gamma:S^1\rightarrow \Sigma$, the map which corresponds to the Maslov class is given by recording the anticlockwise angle from $\eta_x$ to $T_xL$ at each point $x\in L$. The Maslov class vanishes, and hence $L$ is gradeable with respect to $\eta$, if and only if the sections $\gamma^*\eta$ and $\gamma^*TL$ are homotopic in $\gamma^*\P_\R(T\Sigma)$. A grading of $L$ is a choice of homotopy  between them.\\

We denote the space of line fields by $G(\Sigma):=\pi_0(\Gamma(\Sigma,\P_\R(T\Sigma)))$, and this has the natural structure of a torsor over the group of homotopy classes of maps $\Sigma\rightarrow S^1$, which we identify with $H^1(\Sigma)$. With this in mind, consider the trivial circle fibration 
\begin{align}
S^1\xrightarrow{\iota}\mathbb{P}_\R(T\Sigma)\xrightarrow{p}\Sigma,
\end{align}
which induces the exact sequence 
\begin{align}
0\rightarrow H^1(\Sigma)\xrightarrow{p^*}H^1(\mathbb{P}_\R(T\Sigma))\xrightarrow{\iota^*}H^1(S^1)\rightarrow 0.
\end{align}
Note that the orientation of $\Sigma$ induces an orientation on each tangent fibre, and so the map $\iota$ is unique up to homotopy. For each line field, we can associate an element $[\eta]\in H^1(\mathbb{P}_\R(T\Sigma))$ by considering the Poincar\'e--Lefschetz dual of $[\eta(\Sigma)]\in H_2(\mathbb{P}_\R(T\Sigma),\partial\mathbb{P}_\R(T\Sigma))$. These are precisely the elements such that $\iota^*([\eta])([S^1])=1$, and this is the content of \cite[Lemma 1.1.2]{Lekili2019}.\\

As already mentioned, for an embedded curve $\gamma:S^1\rightarrow\Sigma$, there is a corresponding section of the Lagrangian Grassmannian, $\tilde{\gamma}:S^1\rightarrow \P_\R(T\Sigma)$. This is given by $(\gamma,[T\gamma])$, where $[T\gamma]$ is the projectivisation of the tangent space to the curve $\gamma$.
\begin{definition}
	Given a line field, $\eta$, on $\Sigma$, and an immersed curve $\gamma:S^1\rightarrow \Sigma$, we define the winding number of $\gamma$ with respect to $\eta$ as
	\begin{align}\label{winding number def}
	w_\eta(\gamma):=\langle[\eta],[\tilde{\gamma}]\rangle,
	\end{align}
	where $\langle\cdot,\cdot\rangle:H^1(\P_\R(T\Sigma))\times H_1(\P_\R(T\Sigma))\rightarrow \Z$ is the natural pairing. 
\end{definition}
This pairing only depends on the homotopy class of $\eta$, as well as the regular homotopy class of $\gamma$. Recall that, for the case of surfaces, the Maslov number of a Lagrangian is precisely its winding number with respect to the line field used to grade the surface. Therefore, a Lagrangian is gradeable with respect to a line field if and only if its winding number with respect to this line field vanishes. Since we will be considering the Milnor fibre of a Lefschetz fibration, we must consider the grading on the Milnor fibre which is induced by the restriction of the unique grading of $\C^2$ to $\Sigma$. This is crucial so that the functor \eqref{FSRestriction} is graded, and therefore that \eqref{TEA VS} holds. The Lagrangian thimbles are contractible, and therefore gradeable, so each vanishing cycle is also gradeable with respect to the grading on the Milnor fibre induced from the restriction of the grading of $\C^2$. With this, we have that the grading on the Milnor fibre is given by a line field $\ell$ such that $w_\ell(\gamma_i)=0$ for each vanishing cycle $\gamma_i:S^1\rightarrow \Sigma$. Since the vanishing cycles form a basis of $H_1(\Sigma)$, the fact that the winding number around each Lagrangian is zero implies that the homotopy class of $\ell$ is unique.  \\

For any symplectomorphism $\phi:\Sigma_1\rightarrow\Sigma_2$ and $\eta_2\in G(\Sigma_2)$, one can consider the line field on $\Sigma_1$ given by
\begin{align}\label{diffeo action}
\phi^*(\eta_2)(x):=\big[(T_x\phi)^{-1}(\eta_2\circ\phi(x))\big]\quad\text{for all }x\in \Sigma_1.
\end{align}
If one has $(\Sigma_1;\eta_1)$ and $(\Sigma_2;\eta_2)$, where $\eta_1$ and $\eta_2$ are line fields used to grade the surfaces $\Sigma_1$ and $\Sigma_2$, respectively, we say that a symplectomorphism $\phi:\Sigma_1\rightarrow\Sigma_2$ is \emph{graded} if $\phi^*\eta_2$ is homotopic to $\eta_1$. If one takes $\Sigma_1=\Sigma_2$, then we define $\text{Symp}(\Sigma;\partial\Sigma)$ to be the space of symplectomorphisms of $\Sigma$ which fix $\partial\Sigma$ pointwise. One can then define the \emph{pure symplectic mapping class group} of $\Sigma$ as 
\begin{align}
\mathcal{M}(\Sigma;\partial\Sigma):=\pi_0(\text{Symp}(\Sigma;\partial\Sigma)),
\end{align}
and observe that this group acts on $G(\Sigma)$ as in \eqref{diffeo action}. The decomposition of $G(\Sigma)$ into $\mcg$-orbits is given in \cite[Theorem 1.2.4]{Lekili2019}, and this allows one to deduce \cite[Corollary 1.2.6]{Lekili2019}, which appears as Lemma \ref{graded homeo}, below. In what follows we briefly recall the relevant invariants, as well as techniques for their computation, in order to be able to state, and later utilise, Lemma \ref{graded homeo}. \\

For a given line field $\eta$, consider
\begin{align*}
w_\eta(\partial_i\Sigma),\qquad\text{for }i\in\{1,\dots,b\},
\end{align*}
the winding numbers around the boundary components. For two line fields to be homotopic, it is necessary for the winding numbers around each boundary component to agree, although this is definitely not sufficient. In particular, one can have two line fields which agree on the boundary, but which differ along interior non-separating curves. \\

Recall that for a closed, orientated Riemann surface, $\overline{\Sigma}$, a theorem of Atiyah in \cite{Atiyah} proves the existence of a quadratic form $\varphi:\mathcal{S}(\overline{\Sigma})\rightarrow\Z_2$, where $\mathcal{S}(\overline{\Sigma})$ is the space of spin structures on $\overline{\Sigma}$, $\varphi$ does not depend on the complex structure of $\overline{\Sigma}$, and the associated bilinear form on $H^1(\overline{\Sigma};\Z_2)$ is the cup product. Note that $\mathcal{S}(\overline{\Sigma})$ is a torsor over $H^1(\overline{\Sigma};\Z_2)$, and $\varphi$ being a quadratic form on $\mathcal{S}(\overline{\Sigma})$ means that it is a quadratic form on $H^1(\overline{\Sigma};\Z_2)$ for any choice of basepoint. Moreover, the associated bilinear form doesn't depend on the basepoint. He also proves that there are precisely two orbits of the mapping class group of $\overline{\Sigma}$ on $\mathcal{S}(\overline{\Sigma})$, and these are distinguished by the invariant $\varphi$, which is known as the \emph{Atiyah invariant}. In \cite{JohnsonSpinstructures}, Johnson gives a topological interpretation of the Atiyah invariant by proving that it is the Arf invariant of the corresponding quadratic form on $H_1(\overline{\Sigma},\Z_2)$. \\

The Arf invariant is well studied in topology, and we briefly recount some basic facts about it, as well as some computation techniques. Let $(\overline{V},(-\cdot-))$ be a vector space over $\Z_2$ with a non-degenerate bilinear form, and $\overline{q}:\overline{V}\rightarrow\Z_2$  a quadratic form satisfying
\begin{align}\label{reduced quad form condn}
\overline{q}(a+b)=\overline{q}(a)+\overline{q}(b)+(a\cdot b).
\end{align}
It is well-known that the Gau\ss\ sum 
\begin{align}
\text{GS}(\overline{q})=\sum_{x\in \overline{V}}(-1)^{\overline{q}(x)}=\pm 2^{\frac{\text{dim}\overline{V}}{2}},
\end{align}
and the sign is the \emph{Arf invariant} of the quadratic form. I.e. 
\begin{align}
\text{GS}(\overline{q})=(-1)^{\text{Arf}(\overline{q})} 2^{\frac{\text{dim}\overline{V}}{2}},
\end{align}
$\text{Arf}(\overline{q})\in\Z_2$.\\

To compute the Arf invariant, one can just compute the Gau\ss\  sum, although, except in particularly nice circumstances, this can become computationally intractable quite quickly. One can also find a base change to a symplectic basis where the formula simplifies, although we will not do this. Instead, consider the basis $\{e_1,\dots,e_{2n}\}$ of $\overline{V}$, and the matrix defined by 
\begin{align*}
f_{ii}&=\begin{cases}
2\qquad\text{if}\quad\overline{q}(e_{i})= 1\\
0\qquad\text{if}\quad\overline{q}(e_{i})=0
\end{cases}\\
f_{ij}&=\begin{cases}
1\qquad\text{if}\quad e_i\cdot{e_j}= 1\\
0\qquad\text{if}\quad e_i\cdot e_j=0
\end{cases}
\end{align*}
where $i\neq j$. Such a matrix defines an even quadratic form on a $\Z_{(2)}$ module, $V$, whose $\bmod\ 2$ reduction gives the bilinear pairing on $\overline{V}$. The precise module structure of $V$ is not important, since $\det f$ is well defined $\bmod\ 8$, and this value only depends on $\overline{q}$. One then has
\begin{align*}
\text{Arf}(\overline{q})=\begin{cases}
0\quad\text{if }\det f=\pm 1\bmod 8\\
1\quad\text{if }\det f=\pm 3\bmod 8.
\end{cases}
\end{align*}
The standard reference for further discussion of these facts is \cite[Chapter 9]{hirzebruch20060}.\\

Returning to the case at hand, recall that a non-vanishing vector field induces a spin structure on any compact Riemann surface with boundary. If the winding number around each boundary component with respect to this vector field is $2\bmod 4$, then this spin structure extends to the closed Riemann surface obtained by capping off the boundary components with discs, $\overline{\Sigma}$. Any vector field also yields a line field by considering the projectivisation, and each embedded curve has an even winding number with respect to this line field. Conversely, it is shown in \cite[Lemma 1.1.4]{Lekili2019} that if each embedded curve has even winding number with respect to a line field, then this line field arises as the projectivisation of a vector field. In light of this, in the case when two line fields have matching winding numbers around boundary components, arise from the projectivisation of vector fields, and where these vector fields define spin structures which extend to $\overline{\Sigma}$, one must check that the corresponding Atiyah invariants of these spin structures agree. \\

A useful fact is that, by the Poincar\'e--Hopf index theorem, (see, for example, \cite[Chapter 3]{HopfHeinz1983DGit}) for any compact $S\subseteq\Sigma$, we have
\begin{align}\label{winding number EC}
\sum_{i}^bw_\eta(\partial_i(S))=2\chi(S),
\end{align}
where $\chi(S)$ is the Euler characteristic. It is therefore clear that the winding number does not descend to a homomorphism from $H_1(\Sigma)$. What is true, however, is that one can consider for each line field $\eta$ the following homomorphism, given by the $\bmod\ 2$ reduction of the winding number:
\begin{align*}
[w_\eta]^{(2)}:H_1(\Sigma;\Z_2)\rightarrow\Z_2.
\end{align*}
From this, we can define the following invariant. 
\begin{definition}\label{sigma invariant defn}
	We define the $\Z_2$-valued invariant 
	\begin{align*}
	\sigma:G(\Sigma)&\rightarrow\Z_2\\
	\eta&\mapsto\begin{cases}
	0\qquad\text{if }[w_\eta]^{(2)}=0\\
	1\qquad\text{otherwise}.
	\end{cases}
	\end{align*}
\end{definition}

In the case when $\sigma(\eta)=0$, and so $\eta$ is the projectivisation of a vector field, $v$, we need to check when the spin structure on $\Sigma$ defined by $v$ extends to a spin structure on $\overline{\Sigma}$, and if it does, calculate the corresponding Atiyah invariant.\\

For a line field (not necessarily coming from the projectivisation of a vector field), $\eta$, the existence of a quadratic form
\begin{align*}
q_\eta:H_1(\Sigma;\Z_4)&\rightarrow\Z_4
\end{align*}
defined by 
\begin{align*}
q_\eta\big(\sum_{i=1}^m\alpha_i\big)=\sum_{i=1}^mw_\eta(\alpha_i)+2m\in \Z_4,
\end{align*}
where $\alpha_i$ are simple closed curves, and whose associated bilinear form is twice the intersection pairing on $H_1(\Sigma;\Z_4)$ is established in \cite[Proposition 1.2.2]{Lekili2019}. It is proven in \cite[Lemma 1.2.3]{Lekili2019} that for $g(\Sigma)\geq 2$, two line fields, $\eta,\ \theta$, lie in the same $\mcg$-orbit if the winding numbers agree on each boundary component, and $q_{\eta}=q_{\theta}$. In the case when $\eta$ and $\theta$ come from the projectivisation of vector fields, but the corresponding spin structures do not extend to $\overline{\Sigma}$, or when the two line fields do not arise as the projectivisation of vector fields, it is enough to show that $\sigma(\eta)=\sigma(\theta)$, and that the winding numbers on the boundary components agree. In the case where $\eta$ and $\theta$ are line fields such that $\sigma(\eta)=\sigma(\theta)=0$, and 
\begin{align}\label{EvenBoundaryWN's}
w_{\eta}(\partial_i(\Sigma))= w_{\theta}(\partial_i(\Sigma))\in 2+4\Z\quad\text{for each }i\in\{1,\dots,b\},
\end{align}
we must compare the corresponding Atiyah invariants. \\

Recall that the inclusion $\partial\Sigma\xhookrightarrow{i}\Sigma$ induces a map
\begin{align}
i_*:\Z_2^b\simeq H_1(\partial\Sigma;\Z_2)\rightarrow H_1(\Sigma;\Z_2)\simeq\Z_2^{2g+b-1}.
\end{align}
The kernel of the intersection pairing on $H_1(\Sigma;\Z_2)$ is spanned by the image of $i_*$, and the cokernel is naturally identified with $ H_1(\overline{\Sigma};\Z_2)$, where $\overline{\Sigma}$ is as above. The intersection form on $H_1({\Sigma};\Z_2)$ descends to a non-degenerate intersection form on $H_1(\overline{\Sigma};\Z_2)$.  \\

By the fact that $\sigma(\eta)=\sigma(\theta)=0$, we have that the function
\begin{align}
q/2:H_1(\Sigma;\Z_2)\rightarrow \Z_2
\end{align}
is well defined, where $q$ is either $q_{\eta}$ or $q_{\theta}$. By \eqref{EvenBoundaryWN's}, we have that $q/2(\partial_i\Sigma)\equiv 0\bmod 2$ for each $i\in \{1,\dots, b\}$. Since the kernel of the intersection pairing on $H_1(\Sigma;\Z_2)$ is spanned by the boundary curves,  $q/2$ descends to a non-singular quadratic form
\begin{align*}
\overline{q}:H_1(\overline{\Sigma};\Z_2)\rightarrow \Z_2
\end{align*}
such that
\begin{align}
\overline{q}(\alpha+\beta)=\overline{q}(\alpha)+\overline{q}(\beta)+(\alpha\cdot \beta),
\end{align}
and $\text{Arf}(\overline{q})$ gives the last invariant required to ascertain whether two line fields are in the same $\mcg$--orbit in the case where $g(\Sigma)\geq 2$. In the case when $g=1$, we define
\begin{align}\label{genus1invariant}
\tilde{A}(\eta):=\gcd\{w_\eta(\alpha),w_\eta(\beta),w_\eta(\partial_1\Sigma)+2,\dots,w_\eta(\partial_b\Sigma)+2\},
\end{align}
where $\alpha$ and $\beta$ are non-separating curves which project to a basis of $H_1(\Sigma;\Z_2)/\text{im}(i_*)$. \\

Putting this all together, \cite[Theorem 1.2.4]{Lekili2019} gives criteria for two line fields to be in the same mapping class group orbit. Using this, the authors give criteria for there to exist a graded symplectomorphism between two \emph{different} surfaces. 
\begin{lem}[{\cite[Corollary 1.2.6]{Lekili2019}}]\label{graded homeo}
Let $(\Sigma_1;\eta_1)$ and $(\Sigma_2;\eta_2)$ be two graded surfaces, each of genus $g$ with $b$ boundary components. There exists a symplectomorphism $\phi:\Sigma_1\rightarrow \Sigma_2$ such that $\phi^*(\eta_2)$ is homotopic to $\eta_1$ if and only if 
\begin{align*}
w_{\eta_1}(\partial_i\Sigma_1)&=w_{\eta_2}(\partial_i\Sigma_2),
\end{align*}
for each $i\in\{1,\dots,b\}$, and
\begin{itemize}
	\item If $g=1$, then $\tilde{A}(\eta_1)=\tilde{A}(\eta_2)$;
	\item If $g\geq 2$, then $\sigma(\eta_1)=\sigma(\eta_2)$ and, if the Arf invariant is defined, then  $\mathrm{Arf}(\overline{q}_{\eta_1})=\mathrm{Arf}(\overline{q}_{\eta_2})$.
\end{itemize}
\end{lem}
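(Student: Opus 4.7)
The plan is to reduce the problem to the classification of $\mcg$-orbits of line fields on a single surface, which is precisely the content of \cite[Theorem 1.2.4]{Lekili2019} and is summarised in the paragraphs preceding the statement. Since $\Sigma_1$ and $\Sigma_2$ share the same topological type, a Moser-type argument produces a symplectomorphism $\psi:\Sigma_1\to\Sigma_2$ which restricts to a fixed model on collar neighbourhoods of the boundary. Writing $\eta_1':=\psi^*\eta_2$ and noting that any candidate graded symplectomorphism $\phi$ factors as $\phi=\psi\circ\phi_0$ with $\phi_0\in\text{Symp}(\Sigma_1;\partial\Sigma_1)$, the existence of $\phi$ becomes equivalent to $\eta_1$ and $\eta_1'$ lying in the same $\mcg$-orbit on $\Sigma_1$.

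For the forward direction, I would verify that each of the four invariants appearing in the statement---the boundary winding numbers $w_\eta(\partial_i\Sigma)$, the $\Z_2$-invariant $\sigma(\eta)$, the Arf invariant of the descended quadratic form $\overline{q}_\eta$, and the genus-one invariant $\tilde{A}(\eta)$---is preserved under pullback by an orientation-preserving diffeomorphism. This is essentially tautological: winding numbers are computed from the natural pairing between a section of $\P_\R(T\Sigma)$ and the projectivisation of the tangent line to an immersed curve, and the quadratic refinements are built purely from winding numbers and the $\Z_2$-intersection pairing, both of which are diffeomorphism invariants. Thus if $\phi$ exists, the listed invariants of $\eta_1$ and $\eta_2$ must agree.

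For the reverse direction, invoke \cite[Theorem 1.2.4]{Lekili2019}: on $\Sigma_1$ two line fields lie in the same $\mcg$-orbit precisely when these invariants coincide. Under the hypotheses of the lemma, the invariants of $\eta_1$ and $\eta_1'=\psi^*\eta_2$ match, so there is $\phi_0\in\text{Symp}(\Sigma_1;\partial\Sigma_1)$ with $\phi_0^*\eta_1'$ homotopic to $\eta_1$. Then $\phi:=\psi\circ\phi_0$ is a symplectomorphism $\Sigma_1\to\Sigma_2$ with $\phi^*\eta_2$ homotopic to $\eta_1$, as required.

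The main step requiring care is organising the invariants correctly across the various cases---whether $\sigma=0$ or $1$, whether the line field arises from the projectivisation of a vector field whose induced spin structure extends to the closed surface $\overline{\Sigma}$, and whether $g=1$ or $g\geq 2$---so that the list appearing in the statement is exactly the list classifying orbits in Theorem 1.2.4. Once the cases are aligned and the invariants are verified to be diffeomorphism-invariant, the corollary follows formally.
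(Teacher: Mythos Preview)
The paper does not give its own proof of this lemma: it is stated with attribution to \cite[Corollary 1.2.6]{Lekili2019}, and the surrounding text merely explains that it follows from \cite[Theorem 1.2.4]{Lekili2019} once the relevant invariants have been set up. Your sketch is exactly the natural deduction of the corollary from that theorem---transport one line field to the other surface via an auxiliary symplectomorphism, then invoke the orbit classification on a single surface---and the forward direction via diffeomorphism-invariance of the winding-number data is correct.

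One small imprecision: the sentence ``any candidate graded symplectomorphism $\phi$ factors as $\phi=\psi\circ\phi_0$ with $\phi_0\in\text{Symp}(\Sigma_1;\partial\Sigma_1)$'' is not literally true, since $\psi^{-1}\circ\phi$ need not fix the boundary pointwise (it could permute components or act nontrivially on a component). This does not damage the argument, because you never actually use that factorisation: the forward direction is handled directly by invariance of the winding numbers under pullback, and in the reverse direction you \emph{construct} $\phi$ as $\psi\circ\phi_0$ rather than factoring a given one. You should also make explicit that $\psi$ is chosen to match boundary components with equal winding numbers (possible since the multisets agree by hypothesis), so that the invariants of $\eta_1$ and $\psi^*\eta_2$ on $\Sigma_1$ genuinely coincide before Theorem 1.2.4 is applied.
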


\subsection{Gluing cylinders}\label{gluing annuli}
In this subsection we describe a general construction of graded surfaces by gluing cylinders. This allows us to reduce the computation of topological invariants of these surfaces to the combinatorics of how they are glued. We then provide explicit descriptions of the Milnor fibres of invertible polynomials in two variables, as well as the corresponding computations of the topological invariants.\\

Let $A(\ell,r;m)$ denote $m$ disjoint cylinders placed in a column, each with $r$ marked points on the right boundary component, and $\ell$ marked points on the left. Considering each cylinder as a rectangle with top and bottom identified, for each $k\in\{1,\dots,m\}$, counting top-to-bottom in the column, we label the marked points on the right (resp. left) boundary component of the $k^{\text{th}}$ cylinder as $p_{r(k-1)}^+,\dots ,p_{rk-1}^+$ (resp. $p_{\ell(k-1)}^-,\dots, p_{\ell k-1}^-$). The reasoning for the labelling is that we would like to keep track of where the marked points are on each individual cylinder, as well as where each marked point is on the right (resp. left)  side of the column of cylinders with respect to the total ordering $p_0^+,\dots, p_{m_ir_i-1}^+$ (resp. $p_0^-,\dots, p_{m_i\ell_i-1}^-$). \\

Given a collection of cylinders 
\begin{align*}
A(\ell_1,r_1;m_1),A(\ell_2,r_2;m_2),\dots, A(\ell_n,r_n;m_n),
\end{align*}
such that $r_im_i=\ell_{i+1}m_{i+1}$, where $i$ is counted mod $n$, and corresponding permutations $\sigma_i\in \mathfrak{S}_{m_ir_i}$, we can glue these cylinders together in the following way. For each $i\in\{1,\dots n\}$ and $j\in\{0,\dots,m_ir_i-1\}$, we glue a small segment of the boundary component $p_j^+$ in $A(\ell_i,r_i;m_i)$ to $p_{\sigma_i(j)}^-$ in $A(\ell_{i+1},r_{i+1};m_{i+1})$ by attaching a strip.  See Figure \ref{fig:constructionexample} for an example.\\

\begin{figure}[h]
	\centering
	\includegraphics[width=0.8\linewidth]{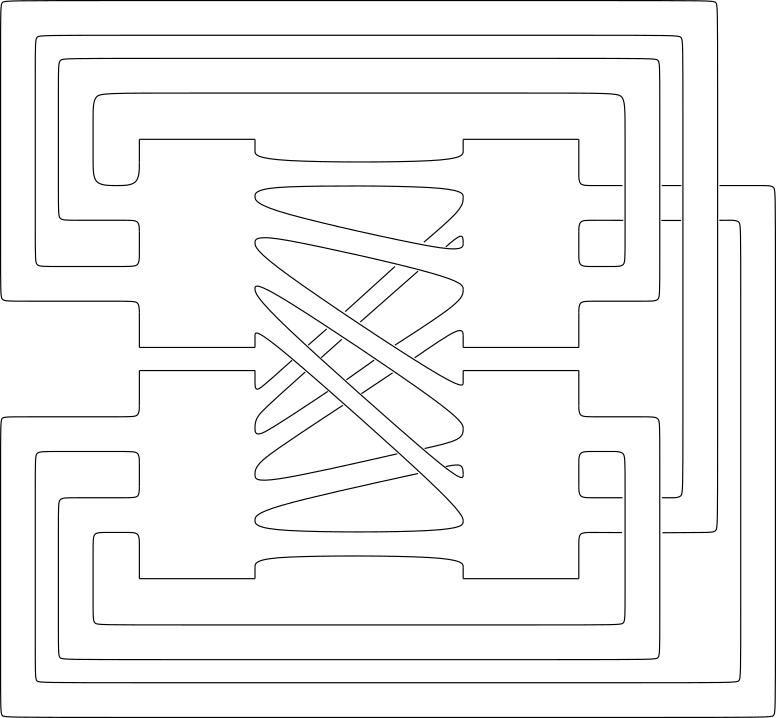}
	\caption{A genus 5 surface with 4 boundary components constructed by gluing $A(2,4; 2)$ to $A(4,2;2)$ via the permutations $\sigma_1=\protect\begin{psmallmatrix}
1&2&3&4&5&6&7&8\\
1&3&5&7&2&4&6&8
		\protect\end{psmallmatrix}$ and $\sigma_2=\protect\begin{psmallmatrix}
		1&2&3&4\\
		3&1&4&2
		\protect\end{psmallmatrix}$.}
	\label{fig:constructionexample}
\end{figure}
For each $i\in\{1,\dots, n\}$, the number of boundary components arising from gluing the $i^{\text{th}}$ and $(i+1)^{\text{st}}$ columns can be computed as follows. Consider the permutations
\begin{align*}
\tau_{r_i}=\big(0,r_i-1, r_i-2,\dots,1\big)\big(r_i,2r_i-1,2r_i-2,\dots, r_i+1\big)\dots\big((m_i-1)r_i,m_ir_i-1,\dots, (m_i-1)r_i+1\big)
\end{align*}
and 
\begin{align*}
\tau_{\ell_i}=\big(0,1,\dots,\ell_{i+1}-1\big)\big(\ell_{i+1},\dots, 2\ell_{i+1}-1\big)\dots\big((m_{i+1}-1)\ell_{i+1},\dots, m_{i+1}\ell_{i+1}-1\big).
\end{align*}
The number of boundary components between the $i^{\text{th}}$ and $(i+1)^{\text{st}}$ columns will then be given by the number of cycles in the decomposition of $\sigma_i^{-1}\tau_{\ell_{i+1}}\sigma_i\tau_{r_i}\in\mathfrak{S}_{m_ir_i}$. Note that if $m_i=m_{i+1}$ then we simply get the commutator. \\

To compute the homology groups of $\Sigma$, one can construct a ribbon graph
\begin{align}\label{graph}
\Gamma(\ell_1,\dots,\ell_n;r_1,\dots,r_n;m_1,\dots,m_n;\sigma_1,\dots,\sigma_n)\subseteq \Sigma,
\end{align}
onto which the surface deformation retracts. To do this, let there be a topological disc $\mathbb{D}^2$ for each of the cylinders. For each disc, attach a strip which has one end on the top, and the other end on the bottom. Then, attach a strip which connects two discs if there is a strip which connects the corresponding cylinders. These strips must be attached in such a way as to respect the cyclic ordering given by the gluing permutation. One can then deformation retract this onto a ribbon graph, whose cyclic ordering at the nodes is induced from the ordering of the strips on each cylinder. If there is no ambiguity, we will refer to this graph as $\Gamma(\Sigma)$. \\

Since the embedding of $\Gamma(\Sigma)$ into $\Sigma$ induces an isomorphism on homology, the homology groups of $\Sigma$ can be easily computed. Namely, since the graph is connected, we have $H_0(\Sigma)=\Z$. Since $\chi(\Sigma)=V-E=\text{rk}H_0(\Sigma)-\text{rk}H_1(\Sigma)=-\sum_{i=1}^nr_im_i$, we have $H_1(\Sigma)=\Z^{\oplus(1-\chi)}$. A basis for the first homology of the graph is given by an integral cycle basis, and so the basis of the first homology for $\Sigma$ is given by loops which retract onto these cycles. \\

Although there is no natural choice of grading on a surface glued in this way, in what follows we will only consider the case where the line field used to grade the surface is horizontal on each cylinder and parallel to the boundary components on attaching strips.
\subsubsection{Loop Polynomials}\label{gluing loop polys}
In the case of loop polynomials $\check{\w}=\check{x}^p\check{y}+\check{y}^q\check{x}$, we have that $n=3$ in the above construction, and we glue the cylinders
\begin{align*}
A(p-1,1;q-1),A(q-1,p-1;1), A(1,q-1;p-1),
\end{align*}
where $\sigma_1$ and $\sigma_2$ are the identity elements in $\mathfrak{S}_{q-1}$ and $\mathfrak{S}_{p-1}$, respectively, and $\sigma_3\in\mathfrak{S}_{(p-1)(q-1)}$ is given by \begin{align}
(q-1)(k_3-1)+i\mapsto (p-1)\big((-i)\bmod q-1\big)+(p-1-k_3),
\end{align}
where in this case $i\in\{0,\dots,q-2\}$ and $k_3\in\{1,\dots,p-1\}$. Call the resulting surface $\Sigma_\text{loop}(p,q)$. \\

For the basis of homology, we begin by considering the compact curves in each cylinder, $\gamma_i$. Together with these curves, we construct the basis for the first homology of the surface as follows. On each of the cylinders in the left and right columns, we take the curves to be approximately horizontal. We must therefore only describe the behaviour of the curves in the middle cylinder. Consider the curve which goes from the $((p-1)(k_1-1)+j)^{\text{th}}$ position on the left hand boundary to the $((q-1)(k_3-1)+i)^\text{th}$ position on the right hand boundary. In accordance with the construction of \cite[Section 3]{HabermannSmith}, this curve must wind $2\pi\Big(\frac{k_3-1}{p-1}+\frac{(1-k_1)\bmod  q-1}{q-1}\Big)$ degrees in the cylinder. This winding goes in the downwards direction, since we are thinking of the argument of the $\check{x}$ coordinate increasing in this direction. These curves form a basis of the first homology, since they retract onto a basis for the graph, $\Gamma(\Sigma_\text{loop}(p,q))$. The line field, $\ell$, used to grade the surface is approximately horizontal on each cylinder, and approximately parallel to the boundary on the connecting strips. By construction, we have $\sigma(\ell)=0$. See Figure \ref{milnorfibre} for the case of $\check{\w}=\check{x}^4\check{y}+\check{x}\check{y}^3$. 

\begin{figure}[h!]
	\centering
	\includegraphics[width=1\linewidth]{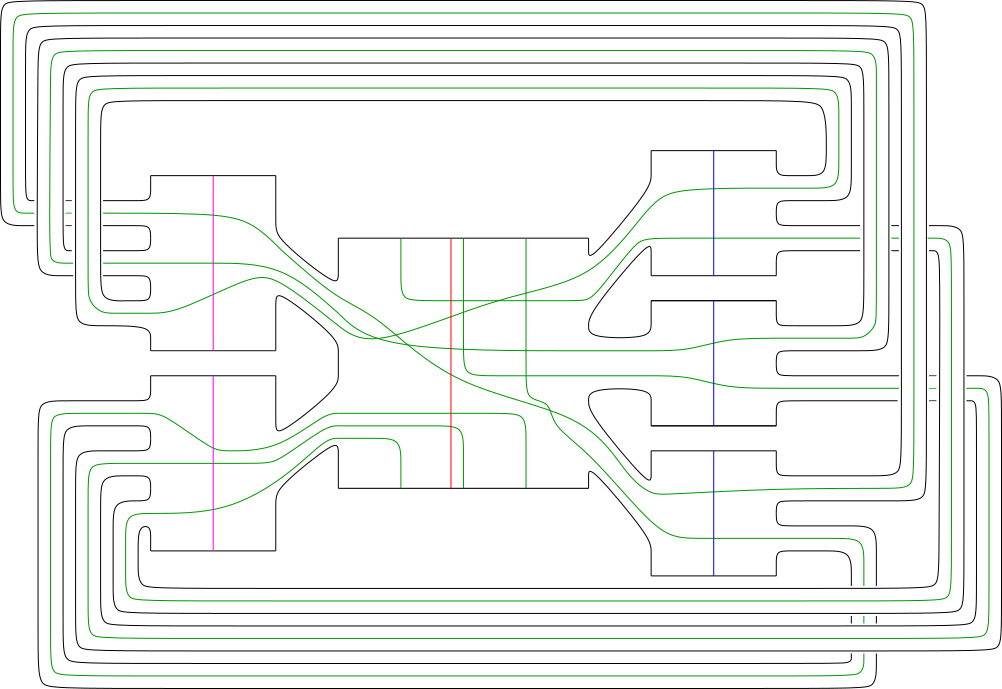}
	\caption{Milnor fibre for $\check{\w}=\check{x}^4\check{y}+\check{x}\check{y}^3$. Top and bottom of each cylinder are identified. Comparing with the basis of Lagrangians in \cite[Section 3]{HabermannSmith}, the red curve corresponds to $V_{\check{x}\check{y}}$, the purple ones to $\vc{\check{x}\check{w}}{i}$, the blue ones to $\vc{\check{y}\check{w}}{i}$, and the green ones to $\vc{0}{l,m}$. }
	\label{milnorfibre}
\end{figure}

There is only one boundary component between the first and second columns, as well as the second and third. With the line field $\ell$ given above, these components have winding numbers $-2(q-1)$ and $-2(p-1)$, respectively. To calculate the number of boundary components arising from gluing the third and first columns, note that in this case $\tau_{r_3}$ can be written as 
\begin{align}
(q-1)(k_3-1)+i\mapsto (q-1)(k_3-1)+\big((i-1)\bmod (q-1)\big),
\end{align}
and $\tau_{\ell_1}$ can be written as 
\begin{align}\label{left perm}
(p-1)(k_1-1)+j\mapsto (p-1)(k_1-1)+\big((j+1)\bmod(p-1)\big).
\end{align}
With this description, one can see that $\sigma_3^{-1}\tau_{\ell_1}\sigma_3\tau_{r_3}\in\mathfrak{S}_{(p-1)(q-1)}$ is given by
\begin{align}
(q-1)(k_3-1)+i\mapsto (q-1)\big((k_3-2)\bmod p-1\big)+\big((i-1)\bmod q-1\big).
\end{align}
As such, the length of a cycle is the least common multiple of $(p-1)$ and $(q-1)$, which is $\frac{(p-1)(q-1)}{\gcd(p-1,q-1)}$. There are therefore $\gcd(p-1,q-1)$ boundary components coming from gluing the third column to the first, each of winding number $-2\frac{(p-1)(q-1)}{\gcd(p-1,q-1)}$. We can then compute the genus from \eqref{winding number EC}, which yields
\begin{align*}
-2(p-1)-2(q-2)-2(p-1)(q-1)=2(2-2g_{\text{loop}}-\gcd(p-1,q-1)-2),
\end{align*}
and so the genus is 
\begin{align*}
g_{\text{loop}}=\frac{1}{2}(pq-1-\gcd(p-1,q-1)).
\end{align*}

By construction, the surface $\Sigma_{\text{loop}}(p,q)$ is graded symplectomorphic to the Milnor fibre of the polynomial $\check{\w}=\check{x}^p\check{y}+\check{x}\check{y}^q$. To see this, consider the ribbon graph which corresponds to the orientable surface $\check{V}_{\check{\w}}$. To construct this graph, first consider a disc $\mathbb{D}^2$ for each of the neck regions of the construction of the Milnor fibre in \cite[Section 3.1]{HabermannSmith}. Then, attach a thin strip which connects two discs if there is at least one vanishing cycle which goes between them. The cyclic ordering of the strips at each disc is determined by the ordering of the vanishing cycles passing through a corresponding neck region. This graph can then be embedded into $\check{V}_{\check{\w}}$ in such a way that all intersections occur on the interior of the discs, and away from the discs, the vanishing cycles are on the interior of the attaching strips. One can deformation retract this onto a graph with the induced cyclic ordering at the vertices. Call this graph $\Gamma(\check{V}_{\check{\w}})$, and observe that it is on-the-nose the same as $\Gamma(\Sigma_\text{loop}(p,q))$, and so the corresponding surfaces with boundary are symplectomorphic. See Figure \ref{Ribbon_graph} for an example of $p=4$, $q=3$. 

\begin{figure}[h!]
	\centering
	\begin{tikzpicture}
	\tikzset{every loop/.style={}}
	\node[circle,fill=black] at (0,0) {} edge [in=250,out=110,loop] ();
	\node[circle,fill=black] at (2,2) {} edge [in=340,out=200,loop] ();
	\node[circle,fill=black] at (4,2) {} edge [in=340,out=200,loop] ();
	\node[circle,fill=black] at (6,2) {} edge [in=340,out=200,loop] ();
	\node[circle,fill=black] at (-2,2) {} edge [in=340,out=200,loop] ();
	\node[circle,fill=black] at (-4,2) {} edge [in=340,out=200,loop] ();
	\draw (0,0) to [in=270,out=40] (2,2);
	\draw (0,0) to [in=270,out=150](-2,2);
	\draw (0,0) to [in=270,out=20] (4,2);
	\draw (0,0) to [in=270,out=0] (6,2);
	\draw (0,0) to [in=270,out=180](-4,2);
	\draw (-2,2) to [in=135,out=30](6,2);
	\draw (-2,2) to [in=135,out=90](4,2);
	\draw (-2,2) to [in=135,out=150](2,2);
	\draw (-4,2) to [in=45,out=30] (6,2);
	\draw (-4,2) to [in=45,out=90] (4,2);
	\draw (-4,2) to [in=45,out=150] (2,2);
	\end{tikzpicture}
	\caption{Ribbon graph for $\Gamma(\check{V}_{\check{\w}})= \Gamma(\Sigma_\text{loop}(4,3))$, where the cyclic ordering of the half-edges at the nodes is in the anticlockwise direction.}
	\label{Ribbon_graph}
\end{figure}
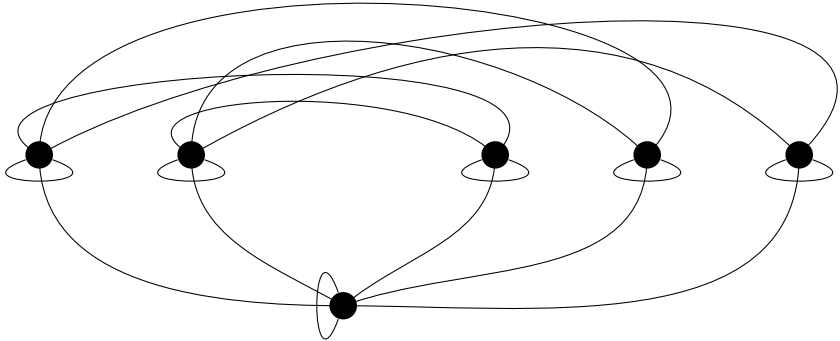

To see that the two surfaces are graded symplectomorphic, consider the corresponding fat graphs in both cases. In this situation one can see that the description of the line field used to grade $\Sigma_{\text{loop}}(p,q)$ agrees with the description of the line field used to grade $\check{V}_{\check{\w}}$, as in \cite[Section 3.7]{HabermannSmith}, and this shows that the surfaces are graded symplectomorphic. 

\subsubsection{Chain polynomials}\label{gluing chain polys}
In the case of chain polynomials, we have $\check{\w}=\check{x}^p+\check{x}\check{y}^q$, and we will show that the Milnor fibre can be constructed by gluing 
\begin{align*}
A(p-1,1;q-1),A(q-1,(p-1)(q-1);1),
\end{align*}
where $\sigma_1$ is the identity element in $\mathfrak{S}_{q-1}$, and $\sigma_2\in\mathfrak{S}_{(p-1)(q-1)}$ is given by 
\begin{align}
i \mapsto (p-1)(-i\bmod q-1)+p-2-\big\lfloor\frac{i}{q-1}\big\rfloor,
\end{align}
where in this case $i\in\{0,\dots,(p-1)(q-1)-1\}$. Call the resulting surface $\Sigma_\text{chain}(p,q)$. \\

For the basis of homology, we begin by including the compact curves in each cylinder, $\gamma_i$. Together with these curves, we construct a basis for homology as follows. On the cylinders in the first column, we take the curves to be approximately horizontal. In the cylinder in the second column, the curve going from the $(p-1)(k_2-1)+j)^\text{th}$ position on the left hand side to the $i^{\text{th}}$ position on the right hand side winds $2\pi\Big(\frac{(1-k_1)\bmod p-1}{p-1}+\frac{i}{(p-1)(q-1)}\Big)$ degrees, again in the downwards direction. This is in accordance with the description of the curves as in \cite[Section 5.2]{HabermannSmith}.  Together, these curves form a basis for the first homology, since they retract onto a basis of the corresponding ribbon graph, $\Gamma(\Sigma_{\text{chain}}(p,q))$. As in the loop case, the line field, $\ell$, used to grade the surface is approximately horizontal on each cylinder, and approximately parallel to the boundary on the connecting strips. By construction, we have $\sigma(\ell)=0$. \\

There is only one boundary component which arises from gluing the first and second columns, and the winding number around this boundary component is $-2(q-1)$. To compute the number of boundary components, and their winding numbers, arising from gluing the second column to the first, observe that in this case, $\tau_{r_2}$ is just the permutation $j\mapsto j-1$, and $\tau_{\ell_{1}}$ is of the same form as \eqref{left perm}. The permutation  $\sigma_2^{-1}\tau_{\ell_1}\sigma_2\tau_{r_2}\in\mathfrak{S}_{(p-1)(q-1)}$
is given by
\begin{align}
i\mapsto i-q.
\end{align}
Therefore the length of a cycle in the above permutation is $\frac{(p-1)(q-1)}{\gcd(p-1,q)}$. From this, we see that there are $\gcd(p-1,q)$ boundary components arising from this gluing, and each boundary component has winding number $-2\frac{(p-1)(q-1)}{\gcd(p-1,q)}$.  Therefore there are $1+\gcd(p-1,q)$ boundary components in total, and we conclude from \eqref{winding number EC} that
\begin{align*}
g_{\text{chain}}=\frac{1}{2}(pq-p+1-\gcd(p-1,q)).
\end{align*}

As in the loop case, we claim that the surface constructed above is graded symplectomorphic to $\check{V}_{\check{\w}}$. To see this, we can construct a ribbon graph corresponding to $\check{V}_{\check{\w}}$ as in the case of loop polynomials. This graph also matches $\Gamma(\Sigma_{\text{chain}}(p,q))$ on-the-nose, and this establishes that $\Sigma_{\text{chain}}(p,q)$ and $\check{V}_{\check{\w}}$ are symplectomorphic. To see that they are graded symplectomorphic, observe that in the corresponding fat graphs, the description of the line field above agrees with the description as in \cite[Section 5.3]{HabermannSmith}, and this shows that the surfaces are graded symplectomorphic. 
\subsubsection{Brieskorn--Pham polynomials}\label{gluing BP polys}
In the case of Brieskorn--Pham polynomials, we have\\ $\check{\w}=\check{x}^p+\check{y}^q$, where $(p,q)\neq(2,2)$. Consider the surface obtained by gluing one cylinder to itself with the permutation $\sigma\in\mathfrak{S}_{(p-1)(q-1)-1}$, which is given by
\begin{align}
i\mapsto -i(q-1),
\end{align}
where in this case $i$ is a point on the right boundary, and is considered as an element of \\ $\{0,\dots,(p-1)(q-1)-2\}$. Call this surface $\Sigma_{BP}(p,q)$.\\

For the basis of homology, we take a compact vertical curve in the cylinder, $\gamma_1$, as well as one curve which is approximately parallel to the boundary along each of the connecting strips. On the interior of the cylinder, we have that the curve beginning in the $j^\text{th}$ position on the left hand side and ending at the $i^\text{th}$ position on the right hand side must wind $2\pi\Big(\frac{i+(-j)\bmod [(p-1)(q-1)-1]}{(p-1)(q-1)-1}\Big)$ degrees in the downwards direction, in accordance with the description of the curves in \cite[Section 6.2]{HabermannSmith}. Together, these curves form a basis for the first homology of $\Sigma_{BP}(p,q)$, since they retract onto a basis of the corresponding ribbon graph, $\Gamma(\Sigma_{BP}(p,q))$. As in the previous two cases, the line field, $\ell$, used to grade the surface is approximately horizontal on the cylinder, and approximately parallel to the boundary on the connecting strips. Again, by construction, we have $\sigma(\ell)=0$.\\

Let $\tau$ be the permutation $i\mapsto i-1$, so the number of boundary components is given by the number of cycles in the decomposition $[\sigma,\tau]\in\mathfrak{S}_{(p-1)(q-1)-1}$. The commutator is given by
\begin{align*}
i\mapsto i-p, 
\end{align*}
and so the length of a cycle will be given by $\frac{(p-1)(q-1)-1}{\gcd(p,q)}$. There are therefore $\gcd(p,q)$ boundary components arising from this gluing, and each has winding number $-2\frac{(p-1)(q-1)-1}{\gcd(p,q)}$. Therefore, we have
\begin{align*}
g_{BP}=\frac{1}{2}((p-1)(q-1)+1-\gcd(p,q)).
\end{align*}
As in the previous cases, we deduce that that $\Sigma_{BP}(p,q)$ is graded symplectomorphic to the Milnor fibre. 
\subsection{Symplectic cohomology of the Milnor fibre}\label{SHMilnorFibre}
In this subsection we utilise the explicit descriptions of the Milnor fibres of invertible polynomials given above to calculate the module structure of symplectic cohomology of these surfaces. By combining this with Theorem \ref{HHSHisom} below, we will be able to deduce the correct mirror curves in the proof of Theorem \ref{main theorem}.\\

The symplectic cohomology of surfaces admits a particularly simple description -- namely, for any Riemann surface, $\Sigma_{g,b}$, of genus $g>0$ with $b>0$ boundary components, we have 
\begin{align}\label{symplectic cohomology}
\SH^*(\Sigma_{g,b})\simeq H^*(\Sigma_{g,b})\oplus\bigoplus_{i=1}^b\bigg(\bigoplus_{k\geq 1}H^*(S^1)[k\cdot w_\eta(\partial_i\Sigma_{g,b})]\bigg),
\end{align}
where $w_\eta(\partial_i\Sigma_{g,b})$ is the winding number of the line field $\eta$ about the boundary component $\partial_i\Sigma_{g,b}$. This was first described in the case of one puncture in \cite[Example 3.3]{seidel2008}, and the generalisation to more than one puncture follows by the same argument. Note that the grading convention in \cite{seidel2008} is shifted by one from ours.  \\

In the case of loop polynomials, $\check{\w}=\check{x}^p\check{y}+\check{y}^q\check{x}$, we saw in Section \ref{gluing loop polys} that the Milnor fibre is a $2+\gcd(p-1,q-1)$-times punctured surface of genus $g_\text{loop}=\frac{1}{2}(pq-1-\gcd(p-1,q-1))$. Consider $\Sigma_{g,b}=\check{V}_{\check{\w}}$, and let $\ell$ be the line field used to grade the surface as in Section \ref{gluing loop polys}. We then have by \eqref{symplectic cohomology} and the analysis in Section \ref{gluing loop polys}, that
\begin{align*}
\SH^0(\check{V}_{\check{\w}})&\simeq\C\\
\SH^1(\check{V}_{\check{\w}})&\simeq\C^{\oplus pq}\\
\SH^{2n(p-1)}(\check{V}_{\check{\w}})&\simeq\SH^{2n(p-1)+1}(\check{V}_{\check{\w}})\simeq\C\qquad\text{for }n\in\Z_{>0} \text{ such that }\frac{q-1}{\gcd(p-1,q-1)}\nmid n\\
\SH^{2n(q-1)}(\check{V}_{\check{\w}})&\simeq \SH^{2n(q-1)+1}(\check{V}_{\check{\w}})\simeq\C\qquad\text{for }n\in\Z_{>0} \text{ such that }\frac{p-1}{\gcd(p-1,q-1)}\nmid n\\
\SH^{2n\frac{(p-1)(q-1)}{\gcd(p-1,q-1)}}(\check{V}_{\check{\w}})&\simeq \SH^{2n\frac{(p-1)(q-1)}{\gcd(p-1,q-1)}+1}(\check{V}_{\check{\w}})\simeq\C^{\oplus(2+\gcd(p-1,q-1))}\qquad\text{for }n\in\Z_{>0}.
\end{align*}

In the case of chain polynomials, $\check{\w}=\check{x}^p+\check{x}\check{y}^q$, we have that the Milnor fibre is a $(1+\gcd(p-1,q))-$times punctured surface of genus $g_\text{chain}=\frac{1}{2}(pq-p+1-\gcd(p-1,q))$. Let $\ell$ be the line field used to grade the surface, as in Section \ref{gluing chain polys}. We then have by \eqref{symplectic cohomology} and the analysis in Section \ref{gluing chain polys} that
\begin{align*}
\SH^0(\check{V}_{\check{\w}})&\simeq\C\\
\SH^1(\check{V}_{\check{\w}})&\simeq\C^{\oplus pq-p+1}\\
\SH^{2n(q-1)}(\check{V}_{\check{\w}})&\simeq\SH^{2n(q-1)+1}(\check{V}_{\check{\w}})\simeq\C\qquad\text{for }n\in\Z_{>0} \text{ such that }\frac{p-1}{\gcd(p-1,q)}\nmid n\\
\SH^{2n\frac{(p-1)(q-1)}{\gcd(p-1,q)}}(\check{V}_{\check{\w}})&\simeq\SH^{2n\frac{(p-1)(q-1)}{\gcd(p-1,q)}+1}(\check{V}_{\check{\w}})\simeq\C^{\oplus(1+\gcd(p-1,q))}\qquad\text{for }n\in\Z_{>0}.
\end{align*}
In the case of Brieskorn--Pham polynomials, we have that the Milnor fibre is a $\gcd(p,q)$-times punctured surface of genus $g_{BP}=\frac{1}{2}((p-1)(q-1)+1-\gcd(p,q))$. Let $\ell$ be the line field used to grade the surface, as in Section \ref{gluing BP polys}. Then, by \eqref{symplectic cohomology} and the analysis in Section \ref{gluing BP polys}, we have
\begin{align*}
\SH^0(\check{V}_{\check{\w}})&\simeq\C\\
\SH^1(\check{V}_{\check{\w}})&\simeq\C^{\oplus (p-1)(q-1)}\\
\SH^{2n\frac{(p-1)(q-1)-1}{\gcd(p,q)}}(\check{V}_{\check{\w}})&\simeq\SH^{2n\frac{(p-1)(q-1)-1}{\gcd(p,q)}+1}(\check{V}_{\check{\w}})\simeq \C^{\oplus\gcd(p,q)}\qquad\text{for }n\in\Z_{>0}.
\end{align*}
As previously mentioned, the comparison of the symplectic cohomology of the Milnor fibre and the Hochschild cohomology of the Fukaya category of the Milnor fibre will be crucial in our mirror symmetry argument. To this end, we have the following theorem of Lekili and Ueda:
\begin{thm}[{\cite[Corollary 6.6]{2018arXiv180604345L}}] \label{HHSHisom}
	Let $\check{\w}$ be the transpose of an invertible polynomial in two variables such that $\check{d}_{0}>0$. Then 
	\begin{align*}
	\mathrm{SH}^*(\check{V}_{\check{\w}})\simeq\mathrm{HH}^*(\mathcal{F}(\check{V}_{\check{\w}}))
	\end{align*}
\end{thm}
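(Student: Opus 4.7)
My plan is to establish the isomorphism via the closed-open map
\begin{align*}
\mathcal{CO} : \SH^*(\check{V}_{\check{\w}}) \to \HH^*(\mathcal{F}(\check{V}_{\check{\w}})),
\end{align*}
which is canonically defined as a unital homomorphism of graded $\C$-algebras. Since the discussion in Section \ref{IntroStrategy} already establishes (via Seidel's argument from \cite[Lemma 5.4]{HMSQuartic}) that the vanishing cycles $\mathcal{S}$ split-generate $\mathcal{F}(\check{V}_{\check{\w}})$ whenever $\check{d}_0 \neq 0$, which is the case here, it suffices to verify the isomorphism after passing to the $A_\infty$-endomorphism algebra $\mathcal{A}$ of $\mathcal{S}$, so that the right-hand side becomes $\HH^*(\mathcal{A},\mathcal{A})$ where $H^*(\mathcal{A})$ is the trivial extension algebra $A$ of \eqref{TEA VS}.

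First I would compute the left-hand side degree-by-degree using the explicit formula \eqref{symplectic cohomology}, combined with the ribbon graph descriptions and the boundary winding numbers already assembled in Section \ref{gluing annuli} for each of the loop, chain and Brieskorn--Pham cases. Next, for the right-hand side, I would use the identification $A \simeq A^\rightarrow \oplus (A^\rightarrow)^\vee[-1]$ from \eqref{TEA VS} together with the explicit presentation of $\mathcal{A}^\rightarrow$ from \cite{HabermannSmith} to compute the Hochschild cohomology of the trivial extension algebra. The standard long exact sequence for $\HH^*$ of a trivial extension splits it into a piece coming from the directed subalgebra $A^\rightarrow$ and a piece coming from the shifted bimodule $(A^\rightarrow)^\vee[-1]$, and both pieces can be matched with the line-field terms of $\SH^*$, where the factor $H^*(S^1)[k\cdot w_\eta(\partial_i\Sigma)]$ reflects the bimodule contribution for each boundary circle.

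The main obstacle is upgrading the graded-dimension match to an honest isomorphism induced by $\mathcal{CO}$. To handle this I would appeal to a geometric identification of generators on both sides: the unit of $\SH^0$ and the identity endomorphism of a generating object agree; each simple Reeb orbit class around a puncture can be realised, via continuation maps, as the boundary-rotation class on the vanishing cycles meeting that puncture, and this is precisely the corresponding Hochschild cocycle in the dual bimodule factor of $A$. An alternative and cleaner route is to apply Ganatra-type non-degeneracy: since the compact Fukaya category of a Liouville surface with $g\geq 1$ is homologically smooth and proper and split-generated by $\mathcal{S}$, the open-closed map is an isomorphism, and dualising via Poincar\'e--Serre duality gives the claim for $\mathcal{CO}$. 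Either way, once one knows $\mathcal{CO}$ is at least injective (which follows from $\mathcal{CO}(1) = 1$ and multiplicativity together with the fact that $\SH^*$ has no nontrivial ideals in the relevant degrees for our surfaces), the dimension match from the first two paragraphs finishes the proof.
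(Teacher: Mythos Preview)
The paper does not prove this statement; it is quoted verbatim from Lekili--Ueda \cite[Corollary 6.6]{2018arXiv180604345L} and used as a black box. So there is no ``paper's own proof'' to compare against, and your proposal should be measured against the actual argument in \cite{2018arXiv180604345L}. That argument runs through the open--closed map and Abouzaid's generation criterion: once one knows the vanishing cycles split-generate (which is exactly what \cite[Lemma 5.4]{HMSQuartic} gives when $\check d_0\neq 0$), the open--closed map $\mathcal{OC}:\HH_*(\mathcal{F}(\check V_{\check\w}))\to\SH^{*+n}(\check V_{\check\w})$ hits the unit and is an isomorphism, and a Calabi--Yau duality argument then yields the isomorphism for $\mathcal{CO}$.

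Your proposal, by contrast, has a genuine gap in the dimension-matching step. You propose to compute $\HH^*(\mathcal{A})$ by using the trivial extension presentation $A=A^\rightarrow\oplus (A^\rightarrow)^\vee[-1]$ and a long exact sequence. But $A$ is only the \emph{cohomology} of $\mathcal{A}$; the Fukaya category of the Milnor fibre is \emph{not} formal (indeed the paper uses precisely this non-formality in the proof of Theorem~\ref{main theorem}), so $\HH^*(\mathcal{A})\not\simeq\HH^*(A)$ in general. The computation of $\HH^*(A)$ in Section~\ref{HHcomputation section} matches $\HH^*(Y_0)$, not $\SH^*(\check V_{\check\w})$, and the two visibly differ (compare the $(-1)$-weight part of $\HH^1(Y_0)$ with $\SH^1$). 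So the ``compute both sides and match'' strategy cannot work without already knowing the $A_\infty$-structure on $\mathcal{A}$, which is exactly what the mirror symmetry argument is trying to pin down.

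The fallback arguments also fail as stated. The compact Fukaya category of a punctured surface is proper but \emph{not} homologically smooth (smoothness is a feature of the wrapped category), so the Ganatra non-degeneracy you invoke does not apply to $\mathcal{F}(\check V_{\check\w})$ directly. And the injectivity claim ``$\mathcal{CO}(1)=1$ plus multiplicativity plus $\SH^*$ has no nontrivial ideals'' is not an argument: $\SH^*$ of a punctured surface has plenty of zero-divisors and nilpotents, so unitality of $\mathcal{CO}$ gives no injectivity. The clean route is the one Lekili--Ueda take: prove split-generation first, then deduce that $\mathcal{OC}$ (and hence $\mathcal{CO}$) is an isomorphism.
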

Note that assuming $\check{d}_0>0$ is crucial, as can be seen if one considers $\check{\w}=\check{x}^2+\check{y}^2$.
\subsection{Graded symplectomorphisms between Milnor fibres}\label{graded homeos between milnor fibres}
It is a natural question to ask which Milnor fibres are graded symplectomorphic, and in this subsection we utilise Lemma \ref{graded homeo} to determine this. Since the genera, number of boundary components, and winding numbers around boundary components of the Milnor fibres were calculated above, it is easy to check when these match. This gives the potential graded symplectomorphisms, although one must also check that the corresponding Arf invariants agree whenever they are defined. We use the method described in Section \ref{graded symplecto section} to compute the Arf invariant when necessary. By establishing which Milnor fibres are graded symplectomorphic, Corollary \ref{HMS corollary} follows immediately once Theorem \ref{main theorem} is proven.\\

Observe that for each $q\geq 2$ and $n\geq 1$, we have that  $\check{\w}_{\text{loop}}=\check{x}^{(q-1)n+1}\check{y}+\check{y}^q\check{x}$, and $\check{\w}_{\text{chain}}=\check{x}^{qn+1}+\check{y}^q\check{x}$ have the same genus, number of boundary components, and winding numbers along each boundary component. In the case of $q$ odd, this is enough to give a graded symplectomorphism by Lemma \ref{graded homeo}, since $\sigma=0$ in both cases, and $-2(q-1)\equiv 0\bmod 4$. In the case where $q$ and $n$ are both even, we again have that the Milnor fibres are graded symplectomorphic. In the case where $q$  is even and $n$ is odd, it remains to check that the relevant Arf invariants agree. \\

For a graded symplectomorphism between the Milnor fibres of a chain and Brieskorn--Pham polynomial, we have that $\check{\w}'_{\text{chain}}=\check{x}^{p}+\check{y}^{n(p-1)}\check{x}$ and $\check{\w}_{\text{BP}}=\check{x}^{p}+\check{y}^{np}$ for each $p\geq 2$ and $n\geq 1$ have the same genus, number of boundary components, and winding numbers along each boundary component. In the case where $n$ is even and $p$ is odd, we have that $-2(n(p-1)-1)\equiv0\bmod 4$, and so Lemma \ref{graded homeo} gives us a graded symplectomorphism between the Milnor fibres. Similarly, for $p=2$ and $n$ odd, Lemma \ref{graded homeo} yields a graded symplectomorphism between Milnor fibres. In all other cases, we must check the relevant Arf invariants. \\

The only possibility for a graded symplectomorphism between the Milnor fibres of a loop and Brieskorn--Pham polynomial is that both are symplectomorphic to a Milnor fibre of a chain polynomial. For such a graded symplectomorphism to exist, we require $\check{\w}_{\text{loop}}=\check{x}^{q}\check{y}+\check{y}^q\check{x}$,  $\check{\w}_{\text{chain}}=\check{x}^{q+1}+\check{y}^q\check{x}$, and $\check{\w}_{\text{BP}}=\check{x}^{q+1}+\check{y}^{q+1}$.
It should be noted that the potential graded symplectomorphisms discussed above are the only such possibilities. 
\subsubsection{Graded symplectomorphisms between the Milnor fibres of loop and chain polynomials}
 In the case of loop polynomials of the form $\check{\w}_{\text{loop}}=\check{x}^{(q-1)n+1}\check{y}+\check{y}^q\check{x}$, we have that there are $q+1$ boundary components. Recall the basis of the first homology of the Milnor fibre given in \cite[Section 3]{HabermannSmith}.  An elementary calculation shows that if we remove the Lagrangian $V_{\check{x}\check{y}}^{\text{loop}}$, as well as the Lagrangians $\{\vcc{\check{x}\check{w}}{i}{\text{loop}}\}_{i\in\{0,\dots,q-2\}}$, then the restriction of the intersection form is non-degenerate. \\

In the case of chain polynomials of the form $\check{\w}_{\text{chain}}=\check{x}^{qn+1}+\check{y}^q\check{x}$, we consider the basis of Lagrangians for the first homology group of the Milnor fibre as given in \cite[Section 5]{HabermannSmith}. By removing the Lagrangian $V_{\check{x}\check{y}}^{\text{chain}}$, as well as the Lagrangians $\{\vcc{\check{x}\check{w}}{i}{\text{chain}}\}_{i\in\{0,\dots,q-2\}}$, the restriction of the intersection form to the remaining Lagrangians is non-degenerate. \\

Let $U_n$ be the $n\times n$ matrix given by $(U_n)_{i,j}=\begin{cases}
1\quad\text{if }i\geq j\\
0\quad\text{otherwise}
\end{cases}$. 
Then we have that $f_{\text{chain}}=U_{q-1}\otimes U_{qn}+(U_{q-1}\otimes U_{qn})^T$. On the other hand, $f_{\text{loop}}$ is the block matrix given by 
\begin{align*}
\begin{pmatrix}
2\text{Id}_{n(q-1)}& \text{Id}_{n(q-1)}&\dots &\text{Id}_{n(q-1)}\\
\text{Id}_{n(q-1)} \\
\vdots& &U_{q-1}\otimes U_{n(q-1)}+(U_{q-1}\otimes U_{n(q-1)})^T\\
\text{Id}_{n(q-1)}
\end{pmatrix}
\end{align*}
In both cases, one can explicitly compute that the determinant is $nq+1$, and so, in particular, we have $\text{Arf}(\overline{q}_{\text{chain}})=\text{Arf}(\overline{q}_{\text{loop}})$. Therefore, by Lemma \ref{graded homeo}, the surfaces are graded symplectomorphic, and their respective compact Fukaya categories are quasi-equivalent. 
\subsubsection{Graded symplectomorphisms between the Milnor fibres of chain and Brieskorn--Pham polynomials} In the case of chain polynomials of the form $\check{\w}'_{\text{chain}}=\check{x}^{p}+\check{y}^{n(p-1)}\check{x}$, and Brieskorn--Pham polynomials of the form $\check{\w}_{\text{BP}}=\check{x}^p+\check{y}^{np}$, we have that the there are $p$ boundary components. In the chain case, we remove $V_{\check{x}\check{y}}^{\text{chain}}$, as well as the Lagrangians $\{\vcc{\check{x}\check{w}}{i}{\text{chain}}\}_{i\in\{0,\dots,p-3\}}$ from the collection of Lagrangians which form a basis of the first homology of the Milnor fibre, and the restriction of the intersection form to the remaining Lagrangians is non-degenerate. In the Brieskorn--Pham case, if we remove the Lagrangians $\{\vcc{0}{l,np-2}{\text{BP}}\}_{l\in\{0,\dots,p-2\}}$ from the collection of Lagrangians which form a basis of the first homology group of the Milnor fibre, as described in \cite[Section 6]{HabermannSmith}, then the restriction of the intersection form to the remaining Lagrangians is likewise non-degenerate. \\

In the case of chain polynomials, we have that $f_{\text{chain}'}$ is given by
removing the top and left $p-2$ rows and columns from
\begin{align*}
\begin{pmatrix}
2\text{Id}_{n(p-1)-1}& \text{Id}_{n(p-1)-1}&\dots &\text{Id}_{n(p-1)-1}\\
\text{Id}_{n(p-1)-1} \\
\vdots& &U_{p-1}\otimes U_{n(p-1)-1}+(U_{p-1}\otimes U_{n(p-1)-1})^T\\
\text{Id}_{n(p-1)-1}
\end{pmatrix}
\end{align*}
In the case of Brieskorn--Pham polynomials, we have that $f_{\text{BP}}=U_{p-1}\otimes U_{np-2}+(U_{p-1}\otimes U_{np-2})^T$.\\

In both cases, we have that 
\begin{align*}
\det f_{\text{chain}'}=\det f_{\text{BP}}=\begin{cases}
p\quad&\text{if p is odd}\\
np-1\quad&\text{if p is even}.
\end{cases}
\end{align*}
We therefore have by Lemma \ref{graded homeo} that the Milnor fibres are graded symplectomorphic. 
\section{Hochschild cohomology via matrix factorisations}\label{HHcomputation section}
In this section, we make the necessary Hochschild cohomology computations which will later enable us to deduce the existence of an affine scheme of finite type which represents the moduli functor of $A_\infty$-structures on the graded algebras we are interested in. This is the main computational component of the paper, and we include the entire calculation for completeness, although a computation of $\HH^n(Y)$ for $n\leq 2$ would have sufficed. \\

Suppose once more that we are in the setting of Section \ref{background section}, and we have that $\w$ is an invertible polynomial in two variables such that $d_0>0$, $\Gamma$ is a subgroup of $\Gamma_\w$ of finite index containing $\phi(\C^*)$, and $\W_u$ the quasi-homogenisation of a semi-universal unfolding corresponding to $u\in U_+$. Denote $V=\{x,y,z\}$, $S:=\text{Sym}\, V=\C[x,y,z]$, and so $R_u=S/(\W_u)$, and $\W_u\in (S\otimes\chi)^\Gamma$ (recall $\chi=\chi_{\w}|_{\Gamma}$). Equation \eqref{coh mf quasi equiv} implies that 
\begin{align}
\HH^*(Y_u)\simeq \HH^*(\A^3, \Gamma,\W_u).
\end{align}
This vastly simplifies the calculation at hand, since a theorem of Ballard, Favero, and Katzarkov (\cite[Theorem 1.2]{BFK}) reduces the computation of the Hochschild cohomology of the category of $\Gamma$--equivariant matrix factorisations of $\W_u$ to studying the cohomology of certain Koszul complexes, which in nice cases reduces to studying the Jacobi algebra of $\W_u$. To this end, consider an element $\gamma\in \ker\chi$, and $V_\gamma$ the subspace of $V$ of $\gamma$-invariant elements. Let $S_\gamma:=\text{Sym}\, V_\gamma$, and $N_\gamma$ the complement of $V_\gamma$ in $V$, so that $V\simeq V_\gamma\oplus N_\gamma$ as a $\Gamma$-module. Denote by $\W_\gamma$ the restriction of $\W_u$ to $\Spec S_\gamma$, and consider the Koszul complex
\begin{align}\label{Koszul cpx}
C^*(d\W_\gamma):=\{\dots\rightarrow \wedge^2V_\gamma^\vee\otimes\chi^{\otimes(-2)}\otimes S_\gamma\rightarrow V_\gamma^\vee\otimes\chi^\vee\otimes S_\gamma\rightarrow S_\gamma\},
\end{align}
where $S_\gamma$ sits 
in cohomological degree $0$, and the differential is the contraction with 
\begin{align}\label{Koszul differetial}
d\W_\gamma\in\big(V_\gamma\otimes\chi\otimes S_\gamma\big)^\Gamma.
\end{align}
Denote by $H^i(d\W_\gamma)$ the $i^{\text{th}}$ cohomology group of the Koszul complex. The zeroth cohomology of \eqref{Koszul cpx} is isomorphic
to the Jacobi algebra of $\W_\gamma$, and if $\W_\gamma$ 
has an isolated critical point at the origin, then $C^*(d\W_\gamma)$ is a resolution. Our main tool for computing Hochschild cohomology is the following theorem:

\begin{thm}[\cite{BFK}]\label{HH comp tool thm}
	Let $\w$ be an invertible polynomial in two variables, $\Gamma$ be a subgroup of $\Gamma_{\w}$ of finite index containing $\phi(\C^*)$ acting on  $\A^{3}=\Spec S$, and $\W_u\in S$ be a non-zero element of degree $\chi$. Assume that the singular locus of the zero set 
	$Z_{(-\W_u)\boxplus \W_u}$ of the 
	Thom--Sebastiani sum $-\W_u\boxplus \W_u$ is contained in the product of the zero sets $Z_{\W_u}\times Z_{\W_u}$. Then $\mathrm{HH}^t(\A^{3},\Gamma,\W_u)$ is isomorphic to 
	\begin{align}\label{HHComputation tool}
	\begin{split}
	\Bigg(\bigoplus_{\substack{\gamma\in\ker\chi,\ l\geq 0\\ t-\dim N_\gamma=2u}}H^{-2l}(d\W_\gamma)&\otimes \chi^{\otimes(u+l)}\otimes\wedge^{\dim N_\gamma}N_\gamma^\vee\\
	&\oplus \bigoplus_{\substack{\gamma\in\ker\chi,\ l\geq 0\\ t-\dim N_\gamma=2u+1}}H^{-2l-1}(d\W_\gamma)\otimes \chi^{\otimes(u+l+1)}\otimes\wedge^{\dim N_\gamma}N_\gamma^\vee\Bigg)^\Gamma.
	\end{split}
	\end{align}
\end{thm}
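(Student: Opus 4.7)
The plan is to adapt the general framework of Ballard--Favero--Katzarkov to our setting. The starting point is the standard identification
\[
\HH^t(\A^{3},\Gamma,\W_u) \simeq \Ext^t_{\mf(\A^{3}\times\A^{3},\Gamma\times\Gamma,-\W_u\boxplus\W_u)}(\Delta_*\mathcal{O},\Delta_*\mathcal{O}),
\]
where $\Delta_*\mathcal{O}$ is the diagonal push-forward, realised as a matrix factorisation via a Koszul-type resolution of the diagonal. The hypothesis that the singular locus of $Z_{-\W_u\boxplus \W_u}$ is contained in $Z_{\W_u}\times Z_{\W_u}$ is precisely what guarantees that this resolution is finite and supported near the critical locus, so that the Ext groups reduce to a local computation.

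The next step is an equivariant twisted-sector decomposition. The anti-diagonal copy of $\Gamma$ in $\Gamma\times\Gamma$ preserves $-\W_u\boxplus\W_u$, and taking invariants over it decomposes the Hochschild cohomology as a direct sum indexed by elements $\gamma\in\Gamma$. Compatibility with the $\chi$-grading of $\W_u$ forces the sum to run only over $\gamma\in\ker\chi$, since outside this kernel the local factor is acyclic. For each such $\gamma$ one restricts to the fixed subspace $V_\gamma$; the restriction $\W_\gamma$ need not have an isolated singularity (the element $\gamma$ can collapse it), so the Koszul complex \eqref{Koszul cpx} of $d\W_\gamma$ can have non-trivial cohomology in negative degrees. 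A matrix-factorisation Hochschild--Kostant--Rosenberg argument then identifies the $\gamma$-sector contribution as the cohomology $H^{-l}(d\W_\gamma)$, tensored with the determinant $\wedge^{\dim N_\gamma}N_\gamma^\vee$ of the conormal bundle (coming from Serre duality along the fixed stratum) and with appropriate powers of $\chi$. Taking diagonal $\Gamma$-invariants at the very end recovers \eqref{HHComputation tool}.

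The main technical obstacle is the careful bookkeeping of character twists and parity. The conditions $t-\dim N_\gamma \in \{2u, 2u+1\}$ and the twists $\chi^{\otimes(u+l)}$, $\chi^{\otimes(u+l+1)}$ arise from the interplay of three graded structures: the cohomological degree of a Hochschild class, the internal $\chi$-grading of $S$, and the $\Z/2$-periodicity of matrix factorisations taken relative to $\chi$. The shift by $\dim N_\gamma$ encodes the degree of the normal-bundle wedge factor, while the powers of $\chi$ are pinned down by the degree of the trivialisation of $K_{\A^{3}}$ (namely $dx\wedge dy\wedge dz$) with respect to $\chi$. Once the local Koszul resolutions on each fixed stratum are set up, the rest of the argument is a careful matching of these gradings; this is the heart of the calculation in \cite{BFK}, and it is what we invoke here.
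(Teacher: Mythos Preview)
The paper does not give its own proof of this theorem: it records the statement as a minor modification of \cite[Theorem~1.2]{BFK} (the only difference being the convention for the Koszul complex) and explicitly refers the reader to \cite{BFK} for the argument. Your outline is a faithful sketch of the BFK strategy---the diagonal matrix-factorisation interpretation of Hochschild cohomology, the twisted-sector decomposition over $\ker\chi$, the HKR-type reduction to Koszul cohomology on each fixed stratum, and the tracking of $\chi$-twists and parity---so it is entirely consistent with what the paper invokes, and there is nothing further to compare.
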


In the case where the $\Gamma$-action on $V$ satisfies $\dim (S\otimes\rho)^\Gamma<\infty$ for any $\rho\in\hat{\Gamma}$, one then has
\begin{align}\label{finite HH}
\dim \HH^t(\A^{3},\Gamma,\W)<\infty
\end{align}
for every $t\in \Z$. To see this, note that the complex $C^*(d\W_\gamma)$ is
always bounded, and the group $\ker\chi$ is finite. Therefore, 
each direct summand of \eqref{HHComputation tool} is finite dimensional, and 
there are only finitely many $u$ contributing to a fixed $t$. \\

Theorem \ref{HH comp tool thm} is a minor modification of \cite[Theorem 1.2]{BFK}, where the difference is in the convention for the Koszul complex. In our case, when there is an additional $\C^*$-action on $V$, then \eqref{HHComputation tool} is equivariant with respect to it. In particular, in the case of $u=0\in U_+$, we have that there is an additional $\C^*$-action on $V$ given by $t\cdot(x,y,z)=(x,y,tz)$, and this induces an additional $\C^*$-action on $\HH^*(Y_0)$. Denote by $\HH^*(Y_0)_{<0}$ the negative weight part of this action. We refer the reader to \cite{BFK} for a proof of Theorem \ref{HH comp tool thm}.
\begin{definition}\label{twisting}
	We will say that the pair $(\w,\Gamma)$ is \emph{untwisted} if $\mathrm{HH}^2(Y_0)_{<0}$ comes only from the summand $(\mathrm{Jac}_{\w}\otimes\C[z]\otimes\chi)^\Gamma$ corresponding to $u=1$ and $\gamma=1\in\ker\chi$ in \eqref{HHComputation tool}.
\end{definition}
It should be emphasised that being (un)twisted is a property of a pair $(\w,\Gamma)$, rather than its category of matrix factorisations. Indeed, we will see below that the polynomial $\w=x^3y+y^2$ is twisted and $\w=x^2y+y^2x$ is not, although Corollary \ref{HMS corollary} shows that the Hochschild cohomology of their respective categories of matrix factorisations are isomorphic. A pair $(\w,\Gamma)$ being untwisted ensures that all of the deformations corresponding to $\HH^2(Y_0)_{<0}$ come from semi-universal unfoldings of the polynomial $\w$. This is a key step in the proof of \cite[Theorem 1.6]{2018arXiv180604345L}, a special case of which appears as Theorem \ref{moduli of Ainfty}. By an abuse of notation, we will refer to a polynomial $\w$ as being (un)twisted to mean that the pair $(\w,\Gamma_{\w})$ is (un)twisted. 
\subsection{Loop polynomials} Consider $\W_0=x^py+y^qx$ with the only restriction that $p, q\geq 2$. Without loss of generality, we can consider $p\geq q$. This has weights as in \eqref{LoopWeights}, where we again set $d:=\gcd(p-1,q-1)$. As explained in Section \ref{background section}, we extend the action of $\Gamma_{\w}\simeq\C^*\times \mu_{d}$ to $\A^3$ as in \eqref{t0 weight} so that we now have
\begin{align}
\Gamma_{\w}=\{(t_0,t_1,t_2)\in(\C^*)^3|\ t_1^pt_2=t_2^qt_1=t_0t_1t_2\}.
\end{align} 
The group of characters is given by 
\begin{align}\label{LoopCharacters}
\hat{\Gamma}_\w:=\Hom(\Gamma_\w,\C^*)\simeq \Z\oplus \Z/d\Z,
\end{align}
and we take $m,n$ to be the same fixed solution to \eqref{LoopBezout} as in Section \ref{UnfoldingsofLoopPolys}.
We write each character $(t_0,t_1,t_2)\mapsto t_2^{mi-\frac{(q-1)j}{d}}t_1^{ni+\frac{(p-1)j}{d}}$, where $(i,j)\in\Z\oplus\Z/d\Z$, as $\rho_{i,j}$. One has that $\text{span}\{ z^\vee\}\simeq \rho_{\frac{(p-1)(q-1)}{d},0}$, $\text{span}\{x^\vee\}\simeq\rho_{\frac{(q-1)}{d},m}$, $ \text{span}\{y^\vee\}\simeq\rho_{\frac{(p-1)}{d},-n}$, $\chi\simeq \rho_{\frac{pq-1}{d},m-n}$, and $\ker\chi\simeq \mu_{pq-1}$. \\

We have that $\Jac_\w$ is given as in \eqref{Jac loop}. Since we are in the situation of an affine cone over an isolated hypersurface singularity, \cite[Section 3.1]{2018arXiv180604345L} shows that we must have $l=0$ in \eqref{HHComputation tool}. Furthermore, there are no contributions when $u<-1$, and the only possible contribution for $u=-1$ comes from when $N_\gamma=\text{span}\{x,y\}$, or $z\notin V_\gamma$. When $\gamma\in \ker\chi$ is the identity element, we have $V_\gamma=V$, $N_\gamma=0$, and $\W_\gamma=\w$. For every $u\in\Z_{\geq0}$, the elements
\begin{align*}
x^iy^jz^k&\in\Big(\Jac_\w\otimes\C[z]\otimes\chi^{\otimes u}\Big)^{\Gamma},\\
z^\vee\otimes x^iy^jz^{k+1}&\in\Big(z^\vee\otimes\Jac_\w\otimes\C[z]\otimes\chi^{\otimes u}\Big)^{\Gamma},
\end{align*}
where $i=u\bmod (p-1)$, $j=u\bmod (q-1)$, and $k=u+\lfloor\frac{u}{q-1}\rfloor+\lfloor\frac{u}{p-1}\rfloor$, contribute $\C(k)$ to $\HH^{2u}(Y_0)$ and $\HH^{2u+1}(Y_0)$, respectively. In addition, in the case where $u\equiv 0\bmod (p-1)$, the elements 
\begin{align*}
x^{p-1}y^{j}z^{k-1}&\in\Big(\Jac_\w\otimes\C[z]\otimes\chi^{\otimes u}\Big)^{\Gamma},\\
z^\vee\otimes x^{p-1}y^{j}z^{k}&\in\Big(z^\vee\otimes\Jac_\w\otimes\C[z]\otimes\chi^{\otimes u}\Big)^{\Gamma},
\end{align*}
where $i,j,$ and $k$ are as above, contribute $\C(k-1)$ to $\HH^{2u}(Y_0)$ and $\HH^{2u+1}(Y_0)$, respectively. In the case where $u\equiv 0\bmod (q-1)$, we also have the elements
\begin{align*}
x^{i}y^{q-1}z^{k-1}&\in\Big(\Jac_\w\otimes\C[z]\otimes\chi^{\otimes u}\Big)^{\Gamma},\\
z^\vee\otimes x^{i}y^{q-1}z^{k}&\in\Big(z^\vee\otimes\Jac_\w\otimes\C[z]\otimes\chi^{\otimes u}\Big)^{\Gamma},
\end{align*}
where $i,j,$ and $k$ are again as above, contribute $\C(k-1)$ to $\HH^{2u}(Y_0)$ and $\HH^{2u+1}(Y_0)$, respectively. In the case when $u\equiv 0\bmod \frac{(p-1)(q-1)}{d}$, we also have the elements 
\begin{align*}
x^{p-1}y^{q-1}z^{k-2}&\in\Big(\Jac_\w\otimes\C[z]\otimes\chi^{\otimes u}\Big)^{\Gamma},\\
z^\vee\otimes x^{p-1}y^{q-1}z^{k-1}&\in\Big(z^\vee\otimes\Jac_\w\otimes\C[z]\otimes\chi^{\otimes u}\Big)^{\Gamma},
\end{align*}
where $i,j,$ and $k$ are again as above, and these contribute $\C(k-2)$ to $\HH^{2u}(Y_0)$ and $\HH^{2u+1}(Y_0)$, respectively.\\

When $V_\gamma=0$, $N_\gamma=V$, $W_\gamma=0$, we have the summand
\begin{align*}
\big(\chi^{\vee}\otimes\wedge^{3}N_\gamma^\vee\big)^\Gamma\simeq \C\cdot x^\vee\wedge y^\vee\wedge z^\vee
\end{align*}
contributes $\C(-1)$ to $\HH^{2u+\dim N_\gamma}(Y_0)=\HH^{1}(Y_0)$, and there are $pq-d-1$ such $\gamma$. \\

In the case when $V_\gamma=\text{span}\{z\}$, $N_\gamma=\text{span}\{x,y\}$, $\W_\gamma=0$, we that for each $n\in\Z_{\geq0} $, the summands
\begin{align*}
\C \cdot z^{\frac{(n+1)(pq-1)}{d}-1}\otimes x^\vee\wedge y^\vee\simeq\Big(\Jac_{\W_\gamma}\otimes\chi^{\otimes {\frac{(n+1)(p-1)(q-1)}{d}-1}}\otimes \wedge^2N_\gamma^\vee\Big)^\Gamma,\\
\C\cdot  z^\vee\otimes z^{\frac{n(pq-1)}{d}}\otimes x^\vee\wedge y^\vee\simeq\Big(\Jac_{\W_\gamma}\otimes\chi^{\otimes \frac{n(p-1)(q-1)}{d}-1}\otimes \wedge^2N_\gamma^\vee\Big)^\Gamma,
\end{align*}
contribute $\C(\frac{(n+1)(pq-1)}{d}-1)$ to $\HH^{\frac{2(n+1)(p-1)(q-1)}{d}}(Y_0)$ and $\C(\frac{n(pq-1)}{d}-1)$ to $\HH^{\frac{2n(p-1)(q-1)}{d}+1}(Y_0)$. There are $d-1$ such contributions. \\

Putting this all together, we have that the Hochschild cohomology of $Y_0$ satisfies 
\begin{align}
\HH^{s+t}(Y_0)_t\simeq\HH^{s+t+2\frac{(p-1)(q-1)}{d}}(Y_0)_{s-\frac{pq-1}{d}}
\end{align}
for $s>0$, and that for $0\leq n\leq 2\frac{(p-1)(q-1)}{d}+1$, $\HH^{n}(Y_0)$ is given by 
\begin{align*}
&\HH^0(Y_0)\simeq\C(0),\\
&\HH^1(Y_0)\simeq\C(0)\oplus \C(-1)^{\oplus pq}\\
&\HH^{2r}(Y_0)\simeq \C\big(r+\lfloor\frac{r}{q-1}\rfloor+\lfloor\frac{r}{p-1}\rfloor\big)\quad \text{for } (p-1),(q-1)\nmid r\\
&\HH^{2r+1}(Y_0)\simeq\HH^{2r}(Y_0)\quad \text{for } (p-1),(q-1)\nmid r\\
&\HH^{2r(q-1)}(Y_0)\simeq \C\big(r(q-1)+\lfloor\frac{r(q-1)}{p-1}\rfloor+r\big)\oplus  \C\big(r(q-1)+\lfloor\frac{r(q-1)}{p-1}\rfloor+r-1\big)\quad\text{for }1\leq r<\frac{p-1}{d}\\
&\HH^{2r(q-1)+1}(Y_0)\simeq\HH^{2r(q-1)}(Y_0)\quad\text{for }1\leq r<\frac{p-1}{d}\\
&\HH^{2r(p-1)}(Y_0)\simeq \C\big(r(p-1)+\lfloor\frac{r(p-1)}{q-1}\rfloor+r\big)\oplus  \C\big(r(p-1)+\lfloor\frac{r(p-1)}{q-1}\rfloor+r-1\big)\quad\text{for }1\leq r<\frac{q-1}{d}\\
&\HH^{2r(p-1)+1}(Y_0)\simeq\HH^{2r(p-1)}(Y_0)\\
&\HH^{2\frac{(p-1)(q-1)}{d}}(Y_0)\simeq\HH^{2\frac{(p-1)(q-1)}{d}+1}(Y_0)\simeq \C(\frac{pq-1}{d})\oplus\C(\frac{pq-1}{d}-1)^{\oplus^{1+d}}\oplus\C(\frac{pq-1}{d}-2).
\end{align*}
Note that this is untwisted in every case. 
\subsection{Chain Polynomials}
Consider the case $\W_0=x^py+y^q$, where $p,q\geq 2$. This has weights as in \eqref{ChainWeights}, and we again take $d:=\gcd(p,q-1)$. We have $\Gamma_{\w}\simeq \C^*\times \mu_d$ as in \eqref{ChainMaxSymGp}, and extend the action to $\A^3$ as in \eqref{t0 weight} so that we now have
\begin{align}
\Gamma_{\w}=\{(t_0,t_1,t_2)\in (\C^*)^3|\ t_1^pt_2=t_2^q=t_0t_1t_2\}.
\end{align} 
The group of characters is given by 
\begin{align}\label{ChainCharacterGp}
\hat{\Gamma}_\w=\Hom(\Gamma_\w,\C^*)\simeq \Z\oplus \Z/d\Z,
\end{align}
and we take $m,n$ to be the same fixed solution to \eqref{ChainBezout} as in Section \ref{UnfoldingsofChainPolys}. We write each character $(t_0,t_1,t_2)\mapsto t_1^{ni+\frac{pj}{d}}t_2^{mi-\frac{(q-1)j}{d}}$ as $\rho_{i,j}$, where $(i,j)\in\Z\oplus\Z/d\Z$. One then has $ \text{span}\{z^\vee\}\simeq\rho_{\frac{(p-1)(q-1)}{d},0}$, $\text{span}\{x^\vee\}\simeq\rho_{\frac{q-1}{d},m}$, $\text{span}\{ y^\vee\}\simeq \rho_{\frac{p}{d},-n}$, $\chi\simeq \rho_{\frac{pq}{d},m-n}$, $\ker\chi\simeq\mu_{pq}$.\\

We have that $\Jac_\w$ is given as in \eqref{Jac chain}. As in the loop case, we have $l=0$ and $u\geq-1$ in \eqref{HHComputation tool}, where $u=-1$ only if $N_\gamma=\text{span}\{x,y\}$, or $z\notin V_\gamma$. In the case where $\gamma\in\ker\chi$ is the identity, we have $V_\gamma=V$, $N_\gamma=0$, and $\W_\gamma=\w$. For each $u\in\Z_{\geq0}$, we have that the elements 
\begin{align*}
x^{i}y^{j}z^k&\in\Big(\Jac_\w\otimes\C[z]\otimes\chi^{\otimes u}\Big)^{\Gamma},\\
z^\vee\otimes x^{i}y^{j}z^{k+1}&\in\Big(z^\vee\otimes\Jac_\w\otimes\C[z]\otimes\chi^{\otimes u}\Big)^{\Gamma},
\end{align*}
where $j=u\bmod (q-1)$, $i=\frac{upq-jp}{q-1}\bmod (p-1)$, and $k=\frac{upq-i(q-1)-jp}{(p-1)(q-1)}$, contribute $\C(k)$ to $\HH^{2u}(Y_0)$ and $\HH^{2u+1}(Y_0)$, respectively. In addition, when $u\equiv 0\bmod (q-1)$, we have contributions from the elements
\begin{align*}
x^{i'}y^{q-1}z^{k'}&\in\Big(\Jac_\w\otimes\C[z]\otimes\chi^{\otimes u}\Big)^{\Gamma},\\
z^\vee\otimes x^{i'}y^{q-1}z^{k'+1}&\in\Big(z^\vee\otimes\Jac_\w\otimes\C[z]\otimes\chi^{\otimes u}\Big)^{\Gamma},
\end{align*}
where $i'=\frac{upq-(q-1)p}{q-1}\bmod (p-1)$ and $k'=\frac{upq-i'(q-1)-(q-1)p}{(p-1)(q-1)}$, and these contribute $\C(k')$ to $\HH^{2u}(Y_0)$ and $\HH^{2u+1}(Y_0)$, respectively.\\

In the case where $u\equiv 0\bmod\frac{(p-1)(q-1)}{\gcd(p-1,q)}$, we also have  
\begin{align*}
x^{p-1}z^k&\in\Big(\Jac_\w\otimes\C[z]\otimes\chi^{\otimes u}\Big)^{\Gamma},\\
z^\vee\otimes x^{p-1}z^{k+1}&\in\Big(z^\vee\otimes\Jac_\w\otimes\C[z]\otimes\chi^{\otimes u}\Big)^{\Gamma},
\end{align*}
where $k=\frac{upq}{(p-1)(q-1)}-1$. These contribute to $\C(k)$ to $\HH^{2u}(Y_0)$ and $\HH^{2u+1}(Y_0)$, respectively. \\

For the elements $\gamma\in\ker\chi$ such that $V_\gamma=0$, $N_\gamma=V$, and $\W_\gamma=0$, we have that the only contribution is from the summand
\begin{align*}
\big(\chi^{\vee}\otimes\wedge^{3}N_\gamma^\vee\big)^\Gamma\simeq \C\cdot x^\vee\wedge y^\vee\wedge z^\vee, 
\end{align*}
which contributes $\C(-1)$ to $\HH^{2u+\dim N_\gamma}(Y_0)=\HH^{1}(Y_0)$, and there are $pq-p-\gcd(p-1,q)+1$ such $\gamma$. \\

In the case where $V_\gamma=\text{span}\{y\}$, there cannot be a contribution. There are $p-1$ such elements of $\ker\chi$ which fix $y$ and nothing else. \\

In the case where $V_\gamma=\text{span}\{z\}$, $N_\gamma=\text{span}\{x,y\}$, we have for each $n\in\Z_{\geq0}$, there are contributions from the summands 
\begin{align*}
\C\cdot z^{\frac{(n+1)pq}{\gcd(p-1,q)}-1}\otimes x^\vee\wedge y^\vee&\simeq\Big(\Jac_{\W_\gamma}\otimes \chi^{\frac{(n+1)(p-1)(q-1)}{\gcd(p-1,q)}-1}\otimes\wedge^2N_\gamma^\vee\Big)^\Gamma,\\
\C\cdot z^\vee\otimes z^{\frac{npq}{\gcd(p-1,q)}}\otimes x^\vee\wedge y^\vee&\simeq\Big(z^\vee\otimes\Jac_{\W_\gamma}\otimes \chi^{\frac{n(p-1)(q-1)}{\gcd(p-1,q)}-1}\otimes\wedge^2N_\gamma^\vee\Big)^\Gamma,
\end{align*}
and these contribute $\C(\frac{(n+1)pq}{\gcd(p-1,q)}-1)$ to $\HH^{\frac{2(n+1)(p-1)(q-1)}{\gcd(p-1,q)}}(Y_0)$ and $\C(\frac{npq}{\gcd(p-1,q)}-1)$ to  $\HH^{2\frac{n(p-1)(q-1)}{\gcd(p-1,q)}+1}(Y_0)$. There are $\gcd(p-1,q)-1$ such terms.
In total, we have that 
\begin{align}
\HH^{s+t}(Y_0)_t\simeq \HH^{s+t+2\frac{(p-1)(q-1)}{\gcd(p-1,q)}}(Y_0)_{t-\frac{pq}{\gcd(p-1,q)}}
\end{align} 
for $s>0$, and for $0\leq n\leq 2\frac{(p-1)(q-1)}{\gcd(p-1,q)}+1$, $\HH^n(Y_0)$ is given by
\begin{align*}
&\HH^0(Y_0)\simeq\C(0)\\
&\HH^1(Y_0)\simeq\C(0)\oplus\C(-1)^{\oplus(p(q-1)+1)}\\
&\HH^{2r}(Y_0)\simeq \C(\lfloor\frac{rp}{p-1}\rfloor)\quad\text{for } (q-1)\nmid r\\
&\HH^{2r+1}(Y_0)\simeq\HH^{2r}(Y_0)\quad\text{for } (q-1)\nmid r\\
&\HH^{2r(q-1)}(Y_0)\simeq \C(\lfloor\frac{rp(q-1)}{p-1}\rfloor)\oplus\C(\lfloor\frac{p(rq-1)}{p-1}\rfloor)\quad\text{for } 1\leq r<\frac{p-1}{\gcd(p-1,q)}\\
&\HH^{2r(q-1)+1}(Y_0)\simeq\HH^{2r(q-1)}(Y_0)\text{ for } 1\leq r<\frac{p-1}{\gcd(p-1,q)}\\
&\HH^{2\frac{(p-1)(q-1)}{\gcd(p-1,q)}}(Y_0)\simeq \C(\frac{pq}{\gcd(p-1,q)})\oplus\C(\frac{pq}{\gcd(p-1,q)}-1)^{\oplus\gcd(p-1,q)}\oplus \C(\frac{pq}{\gcd(p-1,q)}-2)\\
&\HH^{2\frac{(p-1)(q-1)}{\gcd(p-1,q)}+1}(Y_0)\simeq\HH^{2\frac{(p-1)(q-1)}{\gcd(p-1,q)}}(Y_0).
\end{align*}
This is twisted for the $(p,q)=(3,2)$, but is otherwise untwisted. 
\subsection{Brieskorn--Pham Polynomials}
Consider $\W_0=x^p+y^q$, and without loss of generality, that $p\geq q\geq 2$. We are excluding the case of $p=q=2$, since $d_0=0$ in this case. This has weights as in \eqref{BPWeights}, where we again set $d:=\gcd(p,q)$. We have $\Gamma_{\w}\simeq \C^*\times\mu_d$, as in \eqref{BPMaxSymGp}, and extend the action to $\A^3$ as in \eqref{t0 weight}, so that we now have
\begin{align}
\Gamma_{\w}=\{(t_0,t_1,t_2)\in(\C^*)^3|\ t_1^p=t_2^q=t_0t_1t_2\}.
\end{align} 
The group of characters is given by 
\begin{align}
\hat{\Gamma}_\w:=\Hom(\Gamma_\w,\C^*)\simeq \Z\oplus \Z/d\Z,
\end{align}
and we again take $m,n$ to be the same fixed solution to \eqref{BPBezout} as in Section \ref{UnfoldingsofBPPolys}. We write each character $(t_0,t_1,t_2)\mapsto t_2^{mi-\frac{qj}{d}}t_1^{ni+\frac{pj}{d}}$, where $(i,j)\in\Z\oplus\Z/d\Z$, as $\rho_{i,j}$. One has that $\text{span}\{z^\vee\}\simeq \rho_{\frac{(p-1)(q-1)-1}{d},d-m+n}$, $\text{span}\{ x^\vee\}\simeq\rho_{\frac{q}{d},m}$, $\text{span}\{ y^\vee\}\simeq\rho_{\frac{p}{d},-n}$, $\chi\simeq \rho_{\frac{pq}{d},0}$, and $\ker\chi\simeq \mu_\frac{pq}{d}\times\mu_d$. \\

We have that $\Jac_\w$ is given as in \eqref{Jac BP}. As in the loop and chain cases, we have $l=0$ and $u\geq-1$ in \eqref{HHComputation tool}, where $u=-1$ only if $N_\gamma=\text{span}\{x,y\}$, or $z\notin V_\gamma$. When $\gamma\in\ker\chi$ is the identity, we have that for $0\leq u\leq \frac{(p-1)(q-1)-1}{d}$, the elements
\begin{align*}
x^{i}y^{j}z^k&\in\Big(\Jac_\w\otimes\C[z]\otimes\chi^{\otimes u}\Big)^{\Gamma},\\
z^\vee\otimes x^{i}y^{j}z^{k+1}&\in\Big(z^\vee\otimes\Jac_\w\otimes\C[z]\otimes\chi^{\otimes u}\Big)^{\Gamma},
\end{align*}
where $i,j,k$ are solutions to
\begin{align}\label{equation for k}
\begin{split}
i-k&=-mp\\
j-k&=-nq\\
k&=u+m+n\\
0\leq i&\leq p-2\\
0\leq j&\leq q-2,
\end{split}
\end{align}
contribute $\C(k)$ to $\HH^{2u}(Y_0)$, and $\HH^{2u+1}(Y_0)$. In the case where $u=\frac{(p-1)(q-1)-1}{d}$, we have that there are precisely two solutions to \eqref{equation for k}, otherwise the solution is unique. \\

For the elements $\gamma\in\ker\chi$ such that $V_\gamma=0$, $N_\gamma=V$, and $\W_\gamma=0$, we have that the only contribution is from the summand
\begin{align*}
\big(\chi^{\vee}\otimes\wedge^{3}N_\gamma^\vee\big)^\Gamma\simeq  \C\cdot x^\vee\wedge y^\vee\wedge z^\vee,
\end{align*}
and this contributes $\C(-1)$ to $\HH^{2u+\dim N_\gamma}(Y_0)=\HH^{1}(Y_0)$. There are $(p-1)(q-1)-\gcd(p,q)+1$  such $\gamma$. \\

When $V_\gamma=\text{span}\{x\}$ or $V_\gamma=\text{span}\{y\}$, there is no contribution. There are $q-1$ and $p-1$ such elements in $\ker\chi$, respectively. \\

When $V_\gamma=\text{span}\{z\}$, $N_\gamma=\text{span}\{x,y\}$, $\W_\gamma=0$ for $n\geq 0$ we have that the summands
\begin{align*}
\C\cdot z^{\frac{(n+1)pq}{d}-1}\otimes x^\vee\wedge y^\vee&\simeq\Big(\Jac_{\W_\gamma}\otimes \chi^{\frac{(n+1)(p-1)(q-1)}{d}-1}\otimes\wedge^2N_\gamma^\vee\Big)^\Gamma,\\
\C\cdot z^\vee\otimes z^{\frac{npq}{d}}\otimes x^\vee\wedge y^\vee&\simeq\Big(z^\vee\otimes\Jac_{\W_\gamma}\otimes \chi^{\frac{n(p-1)(q-1)}{d}-1}\otimes\wedge^2N_\gamma^\vee\Big)^\Gamma,
\end{align*}
contribute $\C(\frac{(n+1)pq}{d}-1)$ and $\C(\frac{npq}{d}-1)$ to $\HH^{\frac{2(n+1)((p-1)(q-1)-1)}{d}}(Y_0)$ and $\HH^{\frac{2n((p-1)(q-1)-1)}{d}+1}(Y_0)$, respectively.
There are $\gcd(p,q)-1$ such terms. Putting this all together, we get that
\begin{align}
\HH^{s+t}(Y_0)_t\simeq\HH^{s+t+2\frac{(p-1)(q-1)-1}{d}}(Y_0)_{t-\frac{pq}{d}}
\end{align} 
for $s>0$, and that for $0\leq n\leq \frac{2(p-1)(q-1)-1}{d}+1$, we have that $\HH^n(Y_0)$ is given by
\begin{align*}
&\HH^0(Y_0)\simeq\C(0)\\
&\HH^1(Y_0)\simeq\C(0)\oplus\C(-1)^{\oplus(p-1)(q-1)}\\
&\HH^{2r}(Y_0)\simeq\HH^{2r+1}(Y_0)\simeq \C(k)\quad\text{for $r<\frac{(p-1)(q-1)-1}{\gcd(p,q)}$ and $k$ the unique solution to \eqref{equation for k}}\\
&\HH^{2\frac{(p-1)(q-1)-1}{\gcd(p,q)}}(Y_0)\simeq \C(\frac{pq}{\gcd(p,q)}-2)\oplus \C(\frac{pq}{\gcd(p,q)}-1)^{\oplus \gcd(p,q)-1}\oplus \C(\frac{pq}{\gcd(p,q)})\\
&\HH^{2\frac{(p-1)(q-1)-1}{\gcd(p,q)}}(Y_0)\simeq\HH^{2\frac{(p-1)(q-1)-1}{\gcd(p,q)}+1}(Y_0).
\end{align*}
Note that this is twisted in the case $p=q=3$, and $p=4,\ q=2$, but is otherwise untwisted. 
\subsection{Unfoldings of invertible polynomials}\label{HH for unfoldings of invertible polynomials}
Of course, Theorem \ref{HH comp tool thm} can also be used to compute the Hochschild cohomology of the category of matrix factorisations of an unfolded polynomial. For the polynomials where $\dim U_+>1$, we will need Hochschild cohomology calculations of unfolded polynomials in order to be able to isolate the correct mirror. Towards this end we will need to (at least partially) calculate $\HH^2(Y_u)$ in these cases.

\begin{lem}\label{RulingOutDeformationsLemma}
Let $\w$ an untwisted invertible polynomial in two variables such that $\dim U_+>1$. Then:
\begin{itemize}
	\item For $\w=x^py+y^2x$ and $p>2$, we have $\mathrm{HH}^2(Y_u)=0$ unless $u_{1,0}=0$, $u_{1,1}\neq0$ in \eqref{LoopUnfolding}.
	\item For $\w=x^2y+xy^2$, we have $\dim\mathrm{HH}^2(Y_u)<3$ unless $u_{1,1}\neq 0$ and $u_{0,0}=u_{1,0}=u_{0,1}=0$ in \eqref{LoopUnfolding}. 
	\item For $\w=x^py+y^2$ and $p>3$, we have $\mathrm{HH}^2(Y_u)=0$ unless $u_{1,1}\neq 0 $ and $u_{2,0}=0$ in \eqref{ChainUnfolding}
	\item For $\w=x^2y+y^2$, we have $\dim \mathrm{HH}^2(Y_u)<2$ unless $u_{0,1}\neq 0$ and $u_{0,0}=u_{1,0}=0$ in \eqref{ChainUnfolding}.
\end{itemize}
\end{lem}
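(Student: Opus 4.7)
The proof applies Theorem \ref{HH comp tool thm} to the quasi-homogeneous unfolded polynomial $\W_u$, which for generic $u\in U_+$ has an isolated critical point at the origin, so the hypothesis on the singular locus is satisfied. Sector by sector, the contributions to $\HH^2(Y_u)$ from $\gamma\in\ker\chi$ with $\W_\gamma=0$ on $V_\gamma$ (in particular, for non-identity $\gamma$ with $V_\gamma\subseteq\text{span}\{z\}$ whenever $u_{0,0}=0$, which is automatic in the loop-$q=2,p>2$ and chain-$q=2,p>3$ cases) are independent of $u$ and can be copied verbatim from the $u=0$ calculations already carried out above. All $u$-dependent contributions are concentrated in the identity sector, where the relevant summand of \eqref{HHComputation tool} is the $\chi$-graded piece of $\Jac_{\W_u}$, together with its $z^\vee$-shifted counterpart, and (in the degenerate cases) in the $V_\gamma=\text{span}\{z\}$ sectors when $u_{0,0}\neq 0$ makes $\W_\gamma=u_{0,0}z^h$ non-zero.

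For each case we write down the unfolded polynomial and its three partial derivatives explicitly. In the loop case $\w=x^py+y^2x$ with $p>2$, we have $\W_u=x^py+y^2x+u_{1,1}xyz+u_{1,0}xz^2$ and $\partial_z\W_u=x(u_{1,1}y+2u_{1,0}z)$. When $u_{1,0}\neq 0$ this forces $xz$ proportional to $xy$ in $\Jac_{\W_u}$; substituting into $\partial_y\W_u$ expresses $x^p$ as a multiple of $xy$, and multiplying $\partial_x\W_u$ by $x$ and reducing then collapses the $\chi$-graded piece of $\Jac_{\W_u}$ to zero. When instead $u_{1,1}=0$, one sees directly that $xz=0$ in $\Jac_{\W_u}$, so $xyz=0$, and the remaining relation forces $xy^2=0$, again killing the $\chi$-piece. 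The chain case $\w=x^py+y^2$ with $p>3$ is handled analogously: with $\W_u=\w+u_{1,1}xyz+u_{2,0}x^2z^2$, the relation $\partial_z\W_u=x(u_{1,1}y+2u_{2,0}xz)$ plus the analysis of $\partial_y\W_u$ and $\partial_x\W_u$ collapses the relevant weight piece whenever $u_{2,0}\neq 0$ or $u_{1,1}=0$.

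For the degenerate cases $\w=x^2y+xy^2$ and $\w=x^2y+y^2$, additional care is needed because $u_{0,0}$ (together with $u_{1,0},u_{0,1}$) appears in the unfolding. One repeats the identity-sector computation of the $\chi$-graded piece of $\Jac_{\W_u}$ as above, and also tracks the $V_\gamma=\text{span}\{z\}$ sectors, whose Koszul cohomology contributions to $\HH^2$ depend on $u_{0,0}$ through the derivative $\partial_z(u_{0,0}z^h)$. In each case one combines these contributions and checks directly that the total dimension of $\HH^2(Y_u)$ is bounded by $2$ (respectively $1$) unless only $u_{1,1}$ (respectively $u_{0,1}$) is non-zero, in which case the bound is saturated by the two identity-sector generators $xyz$ and its $z^\vee$-shift, plus the single additional $V_\gamma=\text{span}\{z\}$ contribution in the loop-$p=q=2$ case.

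The main obstacle is the explicit case analysis in the two degenerate cases, where three or four unfolding parameters interact with non-identity sectors that become $u$-dependent; the bookkeeping requires reducing $\chi$-weighted monomials in $\Jac_{\W_u}$ using the three partial derivatives and checking that the bounds hold uniformly in any direction outside the distinguished locus. The loop-$p>2$ and chain-$p>3$ cases are more mechanical, reducing to a handful of monomial manipulations in a single weight piece, while the degenerate cases require a sector-by-sector accounting of how each unfolding parameter contributes or fails to contribute to each summand of \eqref{HHComputation tool} in degree two.
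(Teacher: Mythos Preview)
Your overall strategy matches the paper's: invoke Theorem \ref{HH comp tool thm}, reduce the $\HH^2$ computation to the $\chi$-graded piece of $\Jac_{\W_u}$, and do case analysis on the unfolding parameters. The paper's version is somewhat cleaner in one respect: rather than tracking the non-identity sectors separately, it observes that $(\partial_x\W_u,\partial_y\W_u)$ is a regular sequence, so the only possible contributions to $\HH^2(Y_u)$ are $(\Jac_{\W_u}\otimes\chi)^\Gamma$ and $(\Jac_{\W_u}\otimes x^\vee\wedge y^\vee)^\Gamma$, and then the untwisted hypothesis kills the second term outright for every $u$. This bypasses your sector-by-sector bookkeeping.

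There are, however, concrete errors in your final paragraph. First, the $z^\vee$-shifted terms contribute to \emph{odd} Hochschild degree (they come from $H^{-1}$ of the identity-sector Koszul complex via $t-\dim N_\gamma=2u+1$), so they never appear in $\HH^2$. Second, in both degenerate cases there are \emph{no} $V_\gamma=\operatorname{span}\{z\}$ sectors at all: for loop $p=q=2$ one has $d=\gcd(p-1,q-1)=1$, and the paper's analysis gives $d-1=0$ such $\gamma$; for chain $p=q=2$ one has $\gcd(p-1,q)-1=0$ such $\gamma$. So your concern about $u_{0,0}$ making $\W_\gamma$ nonzero on those sectors is vacuous, and your claimed ``single additional $V_\gamma=\operatorname{span}\{z\}$ contribution'' does not exist. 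Third, in the loop $p=q=2$ case with $u_{1,1}\neq 0$, the relation $\partial_z\W_u=u_{1,1}xy$ forces $xyz=0$ in $\Jac_{\W_u}$, so $xyz$ is not one of the surviving generators; the correct degree-$\chi$ basis of $\Jac_{\W_u}$ there is $\{z^3,xz^2,yz^2\}$, all from the identity sector. The analogous chain computation gives a two-dimensional piece, again purely from the identity sector.
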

\begin{proof}
In each of the cases we consider, the sequence $(\partial_x\W_u,\partial_y\W_u)$ is a regular sequence in $S$. Therefore, the cohomology of the Koszul complex, \eqref{Koszul cpx}, will be concentrated in degrees $0$ and $-1$, and the only contributions to $\HH^2(Y_u)$ can come from $(\Jac_{\W_u}\otimes \chi)^\Gamma$ and $(\Jac_{\W_u}\otimes x^\vee\wedge y^\vee)^\Gamma$. Note that if the latter term contributes to $\HH^2(Y_u)$, then the polynomial is twisted, and we will not consider it.\\

The two loop polynomials we must consider are $\w=x^py+y^2x$ for $p>2$ and $\w=x^2y+y^2x$. In the former case, the unfolding is given by $\W_u= x^py+y^2x+u_{1,1}xyz+u_{1,0}xz^2$. For a contribution to $\HH^2(Y_u)$, there must be an element of $\Jac_{\W_u}$ which is proportional to $\chi$. Note that if $u_{1,1}=0$ then $\dim (\Jac_{\W_u}\otimes \chi)^\Gamma=0$. On the other hand, we have that $\dim (\Jac_{\W_u}\otimes \chi)^\Gamma=0$ if $u_{1,0}\neq0$.
In the case $\w=x^2y+y^2x$, we have that $\dim (\Jac_{\W_u}\otimes \chi)^\Gamma<3$ unless $u_{1,1}\neq 0$, and the other coefficients are zero. \\

The only chain polynomials which need to be considered are $\w=x^py+y^2$ for $p>3$ and $\w=x^2y+y^2$. In the former case, note that if $u_{1,1}=0$, or $u_{1,1},u_{2,0}\neq0$, then $\HH^2(Y_u)=0$.  In the latter case, note that $\dim\HH^2(Y_u)<2$ unless $u_{0,1}\neq 0$ and the other coefficients are zero. 
\end{proof}
\section{Generators and formality}\label{generators and formality}
In this section we recall and implement the results of various authors to establish the required generation statements for the compact Fukaya category of the Milnor fibre, and also the category of perfect complexes on $Y_u$ for any $u\in U_+$, as outlined in Section \ref{IntroStrategy}.\\

As in the previous sections, let $\check{V}_{\check{\w}}$ be the Milnor fibre of the transpose of an invertible polynomial in two variables such that $d_0>0$. Let $\{S_i\}_{i=1}^{\check{\mu}}$ be a distinguished basis of vanishing cycles, and let $\mathcal{S}$ be the full subcategory of $\mathcal{F}(\check{V}_{\check{\w}})$ whose objects are $\{S_i\}_{i=1}^{\check{\mu}}$. As in Section \ref{IntroStrategy}, denote by $\mathcal{A}$ the total $A_\infty$-endomorphism algebra of $\mathcal{S}$,
\begin{align}
\mathcal{A}:=\bigoplus_{i,j}^{\check{\mu}}\text{hom}_{\mathcal{F}(\check{V}_{\check{\w}})}(S_i,S_j).
\end{align}
Let $T_{L}\in \text{Symp}(\Sigma;\partial\Sigma)$ be the Dehn twist around a Lagrangian $L$ in a surface with boundary $(\Sigma;\partial\Sigma)$, as in \cite[Section 16c]{SeidelBook}. By \cite[Theorem 4.17, Comment 4.18(c)]{BSMF_2000__128_1_103_0}, we have that 
\begin{align}
\big(T_{S_1}\circ\dots\circ T_{S_{\check{\mu}}}\big)^{\check{h}}=[2\check{d}_0].
\end{align}
Since $\check{d}_0>0$, the argument of \cite[Lemma 5.4]{HMSQuartic} then shows that $\mathcal{S}$ split-generates $\mathcal{F}(\check{V}_{\check{\w}})$, and so
\begin{align}
\mathcal{F}(\check{V}_{\check{\w}})\simeq\perf \mathcal{S}.
\end{align}
On the B--side, let $\w:\A^2\rightarrow\A$ be an invertible polynomial in two variables such that $d_0>0$. In each case, we aim to associate $U_+$ to the moduli space of $A_\infty$-structures on a fixed quiver algebra. In order to do this, for each $u\in U_+$ we must find generators $\mathcal{S}_u$ of $\perf Y_u$ such that
\begin{enumerate}[(i)]
	\item the isomorphism class of the cohomology level endomorphism algebra $\End(\S_u)$ does not depend on $u\in U_+$, and
	\item the generator $\S_0$ at $0\in U_+$ admits a $\C^*$-equivariant structure such that the cohomological grading on $\End(\S_u)$ is proportional to the weight of the $\C^*$-action. 
\end{enumerate}
If we find generators which satisfy condition (i), then we can think of deformations of $Y$ in terms of deformations of the $A_\infty$-structures on the cohomology level endomorphism algebra. Condition (ii) will be necessary to deduce that $\text{end}(\mathcal{S}_0)$ is formal. \\

Recall (\cite[Theorem 2]{HabermannSmith}) that $\mf(\A^2,\Gamma_{\w},\w)$ has a tilting object, $\mathcal{E}$, for any two variable invertible polynomial $\w$. For each $u\in U_+$, let $\mathcal{S}_u$ be the image of $\mathcal{E}$ under the pushforward functor 
\begin{align*}
\mf(\A^2,\Gamma,\w)\rightarrow \mf(\A^3,\Gamma,\W_u)\simeq \coh Y_u.
\end{align*}
It is then a consequence of \cite[Theorem 4.1]{2018arXiv180604345L} that $\mathcal{S}_u$ split-generates $\perf Y_u$.\\

Let $\mathcal{A}_u$ be the minimal model of the dg-endomorphism algebra of $\mathcal{S}_u$, $\operatorname{end}(\S_u)$. As discussed in Section \ref{HHcomputation section}, one has a quasi-equivalence
\begin{align}
\Qcoh Y_u\simeq \text{Mod}\,\mathcal{A}_u,
\end{align}
and therefore, by the Morita invariance of Hochschild cohomology, an isomorphism 
\begin{align}
\HH^*(Y_u)\simeq \HH^*(\mathcal{A}_u).
\end{align}
The cohomology algebra $A_u:=H^*(\mathcal{A}_u)$ is independent of $u$, and by \cite[Theorem 1.1]{Hyperplanesections}, is isomorphic as a vector space to \eqref{TEA VS}. On both the A--, and B--sides, the algebra structure is given as in \eqref{TEA multiplication}, since $A^\rightarrow$ is the quiver algebra of a quiver with no cycles, and so $\HH^2(A^\rightarrow,(A^\rightarrow)^\vee[-1])=(\HH_1(A^\rightarrow))^\vee=0$.\\

By exploiting the additional $\C^*$-action, one can prove a general statement for the formality of $\mathcal{A}_0$. This is done by first showing that the cohomological grading on $\End^*(\mathcal{S}_0)$ is proportional (equal in the case of curves) to the weight of the $\C^*$-action. This follows from the fact that the dualising sheaf of $Y_0$ is trivial as an $\mathcal{O}_{Y_0}$-module, but has weight one with respect to the additional $\C^*$-action. Since $\C^*$ is reductive, the chain homotopy to take $\text{end}(\mathcal{S}_0)$ to a minimal $A_\infty$-structure can be made $\C^*$-equivariant. Since $\mu^d$ lowers the cohomological degree by $2$, the only map which can be non-zero is $\mu^2$. 
\begin{thm}[{\cite[Theorem 4.2]{2018arXiv180604345L}}]
	$\mathcal{A}_0$ is formal.
\end{thm}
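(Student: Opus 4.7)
The plan is to implement the $\mathbb{G}_m$-equivariant formality argument outlined in the paragraph preceding the theorem, following Lekili--Ueda. The first step is to exhibit the extra $\G$-action on $Y_0$: at $u=0$ the polynomial $\W_0$ equals $\w(x,y)$ and is independent of $z$, so the action $t\cdot(x,y,z)=(x,y,tz)$ preserves $\W_0^{-1}(0)$ and commutes with $\Gamma$ on $\A^3$, hence descends to a $\G$-action on $Y_0$ with fixed locus the divisor at infinity $X_0$.

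Second, I would promote $\mathcal{S}_0$ to a $\G$-equivariant object of $\coh Y_0$. The pushforward functor \eqref{PushforwardFunctor} is constructed from the resolution $0\to R_0(-\vec{z})\xrightarrow{z}R_0\to\overline{R}_0\to 0$, which is manifestly $\G$-equivariant once one records the weight of $z$, and $\mathcal{E}$ is pulled back from $\A^2$, on which the extra $\G$ acts trivially. Hence $\mathcal{S}_0$ acquires a canonical $\G$-equivariant structure, and $\text{end}(\mathcal{S}_0)$ becomes a $\G$-equivariant dg-algebra, inducing a weight grading on $\End^*(\mathcal{S}_0)$ compatible with composition.

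Third, I would check the crucial compatibility: that on $\End^*(\mathcal{S}_0)$ the cohomological degree coincides with the $\G$-weight. The relative dualising sheaf of $\mathcal{Y}\to U_+$ is $\Gamma$-equivariantly trivial, but the trivialising section $x^{\vee}\wedge y^{\vee}\wedge z^{\vee}$ has $\G$-weight $-1$, so $\omega_{Y_0}$ carries $\G$-weight $1$ while being isomorphic to $\OS_{Y_0}$ as an $\OS_{Y_0}$-module. Serre duality on the one-dimensional proper stack $Y_0$ then identifies $\Ext^{1-i}(\mathcal{S}_0,\mathcal{S}_0)$ in weight $w$ with the dual of $\Ext^i(\mathcal{S}_0,\mathcal{S}_0)$ in weight $1-w$; combined with the fact that $\Ext^0(\mathcal{S}_0,\mathcal{S}_0)$ is concentrated in weight $0$ (being the endomorphism algebra of the tilting object, pulled back from $\A^2$ with trivial extra action), this pins down the weight on each graded piece to equal the cohomological degree.

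Fourth, because $\G$ is reductive, the homological perturbation lemma can be carried out $\G$-equivariantly to produce a minimal model $\mathcal{A}_0$ whose operations $\mu^d$ are $\G$-equivariant. Each such $\mu^d:A_0^{\otimes d}\to A_0$ has cohomological degree $2-d$, and by the identification of weight with degree must therefore have $\G$-weight $2-d$ as well; but equivariance forces it to preserve weight, so $\mu^d=0$ whenever $d\neq 2$. The only substantive obstacle is the weight--degree identification in step three: one must verify that the proportionality constant is exactly $1$ and not merely a positive rational, and to that end it is essential that $Y_0$ is a curve (so that the Serre dualising shift is $n-1=0$ plus a weight-$1$ correction) rather than a higher-dimensional hypersurface.
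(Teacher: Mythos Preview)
Your proposal follows exactly the $\G$-equivariant formality argument sketched in the paragraph preceding the theorem (which is Lekili--Ueda's argument): extra $\G$-action at $u=0$, equivariant structure on $\mathcal{S}_0$, identification of weight with cohomological degree via the $\G$-weight on $\omega_{Y_0}$, and equivariant homological perturbation forcing $\mu^d=0$ for $d\neq 2$.

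The one point to correct is your final paragraph. It is \emph{not} essential that the proportionality constant be exactly $1$, nor that $Y_0$ be a curve. If weight equals $c\cdot\text{degree}$ for any nonzero rational $c$, then a $\G$-equivariant $\mu^d$ must preserve weight while shifting degree by $2-d$, hence shifting weight by $c(2-d)$; this forces $d=2$ regardless of $c$. In the general Lekili--Ueda setting (hypersurface of dimension $n-1$), the trivial extension algebra is concentrated in degrees $0$ and $n-1$ with weights $0$ and $1$ respectively, so $c=1/(n-1)$, and formality still follows. The paper's parenthetical ``(equal in the case of curves)'' is recording that $c=1$ here, not that this equality is needed for the argument. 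So there is no obstacle where you flag one.
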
 
In particular, this means that
\begin{align}\label{Formal HH}
\HH^*(Y_0)\simeq \HH^*(A),
\end{align}
and so the computations in Section \ref{HHcomputation section} imply that the moduli space of $A_\infty$-structures on $A$ is represented by an affine scheme of finite type. Furthermore, combining equation \eqref{Formal HH} with Theorem \ref{HHSHisom}, and the calculations in Section \ref{SHMilnorFibre} gives us that the $A_\infty$-structure on $\mathcal{A}$, the endomorphism algebra of the generators of $\mathcal{F}(\check{V}_{\check{\w}})$, is \emph{not} formal.
\section{Homological mirror symmetry for invertible polynomials in two variables}
In this section, we bring together the previous sections of the paper to establish Theorem \ref{main theorem} and Corollary \ref{HMS corollary}. As noted above, the computations of Section \ref{HHcomputation section} together with \eqref{Formal HH} mean that the moduli space of $A_\infty$-structures on $A$ is represented by an affine scheme of finite type, $\mathcal{U}_\infty(A)$, for any untwisted invertible polynomial $\w$. As explained in Section \ref{IntroStrategy}, we would like to identify $\mathcal{U}_{\infty}(A)$ with the space $U_+$ corresponding to $\w$ by showing that the map \eqref{U+ModuliMap} is an isomorphism. To this end, we utilise the following special case of \cite[Theorem 1.6]{2018arXiv180604345L}:
\begin{thm}\label{moduli of Ainfty}
	Let $\w$ be an untwisted invertible polynomial in two variables such that $d_0>0$, and $\Gamma$ be a subgroup of $\Gamma_\w$ containing $\phi(\C^*)$ as a subgroup of finite index. Let $A^\rightarrow$ be the endomorphism algebra of a tilting object in $\mathrm{mf}(\A^2,\Gamma,\w)$, and let $A$ be the degree $1$ trivial extension algebra of $A^\rightarrow$. Then there is a $\C^*$-equivariant isomorphism $U_+\xrightarrow{\sim}\mathcal{U}_\infty(A)$ which sends $0\in U_+$ to the formal $A_\infty$-structure on $A$. 
\end{thm}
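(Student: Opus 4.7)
The plan is to construct a $\C^*$-equivariant morphism $\Phi: U_+ \to \mathcal{U}_\infty(A)$ sending $0$ to the formal $A_\infty$-structure, and then promote an infinitesimal isomorphism at the origin to a global isomorphism using the contracting $\C^*$-actions on both sides.

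First, I would define $\Phi$ via the family $\pi_{\mathcal{Y}}: \mathcal{Y} \to U_+$. For each $u \in U_+$, Section \ref{generators and formality} produces a split-generator $\mathcal{S}_u \in \perf Y_u$ whose minimal $A_\infty$-endomorphism algebra $\mathcal{A}_u$ defines an $A_\infty$-structure on the fixed graded algebra $A$ (with $\mu^2$ prescribed by \eqref{TEA multiplication}), and we set $\Phi(u) := [\mathcal{A}_u]$. $\C^*$-equivariance follows from the fact that Pinkham's action on the unfolding parameters $u_{ij}$ is induced by rescaling $x, y, z$, under which the pushforward of the tilting object rescales correspondingly, producing on $\mu^k$ the weight $k-2$ demanded by the canonical action on $\mathcal{U}_\infty(A)$. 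That $\Phi(0)$ is the formal $A_\infty$-structure is precisely the formality of $\mathcal{A}_0$ recalled in Section \ref{generators and formality}.

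The crucial verification is that $d\Phi_0$ is an isomorphism on Zariski tangent spaces. The tangent space to $U_+$ at the origin is $U_+$ itself, canonically spanned by the admissible unfolding monomials indexed by $J_+$. The tangent space to $\mathcal{U}_\infty(A)$ at the formal structure is $\HH^2(A)_{<0}$, which by formality of $\mathcal{A}_0$ and the Morita invariance of Hochschild cohomology equals $\HH^2(Y_0)_{<0}$. The \emph{untwisted} hypothesis (Definition \ref{twisting}) asserts precisely that $\HH^2(Y_0)_{<0}$ is the single summand $(\Jac_\w \otimes \C[z] \otimes \chi)^\Gamma$ coming from $u=1$, $\gamma=1$ in \eqref{HHComputation tool}, which by the explicit calculations in Section \ref{HHcomputation section} is canonically identified with $U_+$ itself. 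A first-order unfolding computation, using the Koszul resolution of $\overline{R}_0$ underlying the pushforward functor \eqref{PushforwardFunctor}, identifies $d\Phi_0(\partial_{u_{ij}})$ with the Hochschild class of $x^i y^j z^{w_{ij}}$, so $d\Phi_0$ is the identity under this matching of bases.

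To globalise, observe that both $U_+$ and $\mathcal{U}_\infty(A)$ are affine schemes of finite type carrying $\C^*$-actions whose weights on the coordinate rings are strictly positive and whose unique fixed point is the origin (Pinkham on one side; on the other, only the $\mu^k$ with $k \geq 3$ are deformed, each of weight $k-2 \geq 1$). A $\C^*$-equivariant morphism between two such positively graded affine schemes that is an isomorphism on the Zariski tangent space at the fixed point induces a surjection of coordinate rings by graded Nakayama, and the matching of $\C^*$-weights on the tangent spaces then forces the Hilbert series of the two rings to coincide, so the map is an isomorphism. The main obstacle I anticipate is the explicit identification $d\Phi_0(\partial_{u_{ij}}) = [x^i y^j z^{w_{ij}}]$: this requires tracing the deformation of $\mathcal{A}_u$ through the pushforward of the tilting object $\mathcal{E}$ and matching it, via the Hochschild cohomology computation of Theorem \ref{HH comp tool thm}, with the expected explicit $2$-cocycle. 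The untwisted hypothesis is essential precisely here, for without it extra contributions from twisted sectors $\gamma \neq 1$ would enlarge $\HH^2(Y_0)_{<0}$ beyond $U_+$, obstructing the existence of such an isomorphism entirely.
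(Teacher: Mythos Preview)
The paper does not give its own proof of this theorem: it is stated as a special case of \cite[Theorem~1.6]{2018arXiv180604345L} and used as a black box. Your outline is a faithful reconstruction of the Lekili--Ueda argument, so in that sense you have recovered the intended proof rather than found an alternative.

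One point in your globalisation step deserves tightening. You write that once $d\Phi_0$ is an isomorphism, graded Nakayama gives surjectivity of $\Phi^*$ on coordinate rings, and then ``the matching of $\C^*$-weights on the tangent spaces forces the Hilbert series of the two rings to coincide''. The first clause is fine, but the second does not follow from tangent data alone: matching cotangent spaces only constrains the graded pieces in the generating degrees, not the full Hilbert series, unless you already know $\mathcal{O}(\mathcal{U}_\infty(A))$ is a polynomial ring. The clean way to finish is to note that since the Zariski cotangent space of $\mathcal{U}_\infty(A)$ at the origin has dimension $n=\dim U_+$, there is a graded surjection $\C[x_1,\dots,x_n]\twoheadrightarrow\mathcal{O}(\mathcal{U}_\infty(A))$; composing with your surjection $\Phi^*$ onto $\mathcal{O}(U_+)\simeq\C[u_1,\dots,u_n]$ gives a graded surjection between positively graded polynomial rings of the same Krull dimension, which is therefore an isomorphism, forcing $\Phi^*$ to be injective as well. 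This is the substance behind your Hilbert series remark, but it uses the embedding dimension bound rather than a direct series comparison.

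The identification $d\Phi_0(\partial_{u_{ij}})=[x^iy^jz^{w_{ij}}]$ that you flag as the main obstacle is indeed the technical heart, and is handled in \cite{2018arXiv180604345L} by tracking the first-order deformation of the pushforward through the explicit Koszul-type description underlying Theorem~\ref{HH comp tool thm}; your description of this step is accurate.
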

This isomorphism descends to the quotient by the $\C^*$-action, and so we get an isomorphism $\big(U_+\setminus(\boldsymbol{0})\big)/\C^*\xrightarrow{\sim}\mathcal{M}_\infty(A) $. It should be reiterated that the polynomial being untwisted is a crucial assumption, as can be seen by considering, for example, $\w=x^3y+y^2$. In this case, we have that $\HH^2(Y_0)_{<0}= \C(3)\oplus \C(2)^{\oplus 2}\oplus \C(1)$, but $U_+=\A^3$. 

\begin{proof}[Proof of Theorem \ref{main theorem}]

In each case, we know that the $A_\infty$-structure on $\mathcal{F}(\check{V}_{\check{\w}})$ is not formal, and so is represented by a point in $\mathcal{M}_\infty(A)$. By Theorem \ref{moduli of Ainfty}, this, in turn, represents the $A_\infty$-structure corresponding to the dg-enhancement of the derived category of perfect sheaves on a semi-universal unfolding of $\w$. In the cases where $\dim U_+=1$, we have that $\mathcal{M}_\infty(A)$ is a single point, and so the semi-universal unfolding (up to scaling) corresponding to this point must be the mirror. Note that in the cases $\w=x^2y+y^q$ for $q>2$ and $\w=x^p+y^2$ for $p>4$, we have
\begin{align*}
\C[x,y,z]/(x^2y+y^q+yz^2)&\simeq\C[x,y,z]/(x^2y+y^q+xyz),\\
\C[x,y,z]/(x^p+y^2+x^2z^2)&\simeq\C[x,y,z]/(x^p+y^2+xzy)
\end{align*}
by completing the square. \\

In the case where $\dim\,U_+>1$, we must exclude the points in $\mathcal{M}_\infty(A)$ other than the claimed mirror. In the case $\w=x^py+y^2$ for $p>3$, we have by Lemma \ref{RulingOutDeformationsLemma} that $\dim \HH^2(Y_u)=0<\dim \SH^2(\check{V}_{\check{\w}})$ unless $u=(0,1)$. By Theorem \ref{HHSHisom}, we must therefore have that the mirror is identified with $Y_u$ for $u=(0,1)\in U_+$. A similar argument in the cases $\w=x^2y+y^2x$ and $\w=x^py+xy^2$ for $p>2$ leads to identifying the mirrors as $Y_u$ for $u=(0,0,0,1)$ and $u=(0,1)$, respectively. \\

In the case of $x^2y+y^2$, we have that if $u\neq (0,0,1)$, then $\dim \HH^2(Y_u)<2=\dim \SH^2(\check{V}_{\check{\w}})$ by Lemma \ref{RulingOutDeformationsLemma}, and so the mirror is identified with $Y_u$ for $u=(0,0,1)$. Again, by completing the square, we have
\begin{align*}
\C[x,y,z]/(x^2y+y^2+yz^2)\simeq\C[x,y,z]/(x^2y+y^2+xyz).
\end{align*}

In the case of $\w=x^3+y^2$, we follow the same argument as in \cite{DehnSurgery}. Namely, we have that if $Y_u$ is an elliptic curve, then $\HH^*(Y_u)$ exists in only finitely many degrees by the Hochschild--Kostant--Rosenberg theorem. Since the symplectic cohomology of the Milnor fibre is non-trivial in arbitrarily large degree, by Theorem \ref{HHSHisom}, we have that the mirror cannot be smooth. We therefore have that the mirror must be the nodal cubic $\W_u=x^3+y^2+xz^4+\frac{\sqrt[3]{2}z^6}{\sqrt{3}}$, and we have
\begin{align*}
\C[x,y,z]/(\W_u)\simeq \C[x,y,z]/(x^3+y^2+xyz)
\end{align*}
by a change of variables.\\

In the cases where the polynomial is twisted, this result has already been established in \cite{Auslanderorders} by different means. Our construction of the Milnor fibres agrees with the surfaces constructed in \cite{Auslanderorders}, and the mirrors established there are precisely the mirrors we claim.\\

The only invertible polynomial where $d_0\not>0$ is $\w=x^2+y^2$, for which $d_0=0$. This, however, corresponds to the mirror symmetry statement for $\C^*$, which is already well established. Therefore, Theorem \ref{main theorem} is true in this case, too. 

\end{proof}
\begin{proof}[Proof of Corollary \ref{HMS corollary}]
By observing that the results of Section \ref{graded homeos between milnor fibres} show that the relevant compact Fukaya categories are quasi-equivalent,  Theorem \ref{main theorem} establishes that the derived categories of perfect complexes of their mirrors are, too. 
\end{proof}

\bibliography{2_var_HMSbib}

\providecommand{\href}[2]{#2}\begingroup\raggedright\begin{thebibliography}{10}

\bibitem{Atiyah}
M.~F. Atiyah, ``{R}iemann surfaces and spin structures,''
  \href{http://dx.doi.org/10.24033/asens.1205}{{\em Ann. Sci. {\'E}c. Norm.
  Sup{\'e}r.} {\bfseries Ser. 4, 4} no.~1, (1971) 47--62}.

\bibitem{BFK}
M.~Ballard, D.~Favero, and L.~Katzarkov, ``A category of kernels for
  equivariant factorizations and its implications for {H}odge theory,''
  \href{http://dx.doi.org/10.1007/s10240-013-0059-9}{{\em Publ. Math. Inst.
  Hautes {\'E}tudes Sci.} {\bfseries 120} (05, 2011) 1–--111}.

\bibitem{BFKVGIT}
M.~Ballard, D.~Favero, and L.~Katzarkov, ``Variation of geometric invariant
  theory quotients and derived categories,''
  \href{http://dx.doi.org/10.1515/crelle-2015-0096}{{\em J. Reine Angew. Math.}
  {\bfseries 2019} no.~746, (03, 2019) 235--303}.

\bibitem{BerglundHubsch}
P.~Berglund and T.~H\"{u}bsch, ``A generalized construction of mirror
  manifolds,''
  \href{http://dx.doi.org/https://doi.org/10.1016/0550-3213(93)90250-S}{{\em
  Nuclear Physics B} {\bfseries 393} no.~1, (1993) 377--391}.

\bibitem{ChoChoaJeong}
C.-H. {Cho}, D.~{Choa}, and W.~{Jeong}, ``{Fukaya category for
  {L}andau-{G}inzburg orbifolds and {B}erglund-{H}{\"u}bsch conjecture for
  invertible curve singularities},'' {\em Preprint, arXiv:2010.09198} (2020) .

\bibitem{CaldararuTu}
A.~C\u{a}ld\u{a}raru and J.~Tu, ``Curved ${A}_\infty$-algebras and
  {L}andau-{G}inzburg models,'' {\em New York J. Math.} {\bfseries 19} (07,
  2010) .

\bibitem{2016arXiv160106027E}
W.~{Ebeling}, ``{Homological mirror symmetry for singularities},'' {\em
  Preprint, arXiv: 1601.06027} .

\bibitem{10.1093/imrn/rns115}
W.~Ebeling and A.~Takahashi, ``{Mirror symmetry between orbifold curves and
  cusp singularities with group action},''
  \href{http://dx.doi.org/10.1093/imrn/rns115}{{\em Int. Math. Res. Not. IMRN}
  {\bfseries 2013} no.~10, (04, 2012) 2240--2270}.

\bibitem{FAVERO2019943}
D.~Favero and T.~L. Kelly, ``Derived categories of {BHK} mirrors,''
  \href{http://dx.doi.org/https://doi.org/10.1016/j.aim.2019.06.013}{{\em Adv.
  Math.} {\bfseries 352} (2019) 943 -- 980}.

\bibitem{Futaki2011}
M.~Futaki and K.~Ueda, ``Homological mirror symmetry for {B}rieskorn--{P}ham
  singularities,'' \href{http://dx.doi.org/10.1007/s00029-010-0055-6}{{\em Sel.
  Math. New Ser.} {\bfseries 17} no.~2, (2011) 435--452}.

\bibitem{FU2}
M.~Futaki and K.~Ueda, ``Homological mirror symmetry for singularities of type
  {D},'' \href{http://dx.doi.org/10.1007/s00209-012-1024-x}{{\em Math. Z.}
  {\bfseries 273} (04, 2013) 633--652}.

\bibitem{Gammage}
B.~{Gammage}, ``{{M}irror symmetry for {B}erglund-{H}{\"u}bsch {M}ilnor
  fibers},'' {\em Preprint, arXiv:2010.15570} (2020) .

\bibitem{HabermannSmith}
M.~{Habermann} and J.~{Smith}, ``{Homological Berglund-H{\"u}bsch mirror
  symmetry for curve singularities},'' {\em J. Symplectic Geom.} {\bfseries 18}
  no.~6, (2020) 1515--1574.

\bibitem{HLVGIT}
D.~Halpern-Leistner, ``The derived category of a {GIT} quotient,''
  \href{http://dx.doi.org/10.1090/S0894-0347-2014-00815-8}{{\em J. Amer. Math.
  Soc.} {\bfseries 28} (03, 2015) 871--912}.

\bibitem{HiranoOuchi}
Y.~{Hirano} and G.~{Ouchi}, ``{Derived factorization categories of
  non-{T}hom--{S}ebastiani-type sum of potentials},'' {\em Preprint, arXiv:
  1809.09940} .

\bibitem{hirzebruch20060}
F.~Hirzebruch and K.~Mayer, {\em {O}(n) - Mannigfaltigkeiten, exotische
  Sph{\"a}ren und Singularit{\"a}ten}.
\newblock Lecture Notes in Mathematics. Springer Berlin Heidelberg, 2006.

\bibitem{HopfHeinz1983DGit}
H.~Hopf, {\em Differential geometry in the large}.
\newblock Lecture notes in mathematics 1000, Springer Verlag, Berlin. 1983.

\bibitem{Isikorlov}
M.~Isik, ``Equivalence of the derived category of a variety with a singularity
  category,'' \href{http://dx.doi.org/10.1093/imrn/rns125}{{\em Int. Math. Res.
  Not. IMRN} {\bfseries 2013} (2013) 2787–--2808}.

\bibitem{JohnsonSpinstructures}
D.~Johnson, ``Spin structures and quadratic forms on surfaces,''
  \href{http://dx.doi.org/10.1112/jlms/s2-22.2.365}{{\em J. London Math. Soc.}
  {\bfseries s2-22} no.~2, (1980) 365--373}.

\bibitem{2019arXiv191109859K}
O.~{Kravets}, ``{Categories of singularities of invertible polynomials},'' {\em
  Preprint, arXiv: 1911.09859} .

\bibitem{Krawitz}
M.~Krawitz, {\em F{JRW} rings and {L}andau-{G}inzburg mirror symmetry}.
\newblock ProQuest LLC, Ann Arbor, MI, 2010.
\newblock Thesis (Ph.D.)--University of Michigan.

\bibitem{kreuzer1992}
M.~Kreuzer and H.~Skarke, ``On the classification of quasihomogeneous
  functions,'' {\em Comm. Math. Phys.} {\bfseries 150} no.~1, (1992) 137--147.

\bibitem{2012arXiv1211.4632L}
Y.~{Lekili} and T.~{Perutz}, ``{Arithmetic mirror symmetry for the
  $2$-torus},'' {\em Preprint, arXiv: 1211.4632} .

\bibitem{DehnSurgery}
Y.~Lekili and T.~Perutz, ``Fukaya categories of the torus and {D}ehn surgery,''
  \href{http://dx.doi.org/10.1073/pnas.1018918108}{{\em Proc. Natl. Acad. Sci.
  U.S.A.} {\bfseries 108} (05, 2011) 8106--13}.

\bibitem{LekiliPolishchuk}
Y.~Lekili and A.~Polishchuk, ``Arithmetic mirror symmetry for genus 1 curves
  with $n$ marked points,''
  \href{http://dx.doi.org/10.1007/s00029-016-0286-2}{{\em Sel. Math. New Ser.}
  {\bfseries 23} (2016) 1851–--1907}.

\bibitem{Lekili2017AMC}
Y.~Lekili and A.~Polishchuk, ``{A modular compactification of
  $\mathcal{M}_{1,n}$ from $A_{\infty}$-structures},'' {\em J. Reine Angew.
  Math.} {\bfseries 2019} no.~755, (2017) 151 -- 189.

\bibitem{Auslanderorders}
Y.~Lekili and A.~Polishchuk, ``Auslander orders over nodal stacky curves and
  partially wrapped {F}ukaya categories,''
  \href{http://dx.doi.org/10.1112/topo.12064}{{\em J. Topol.} {\bfseries 11}
  (2017) 615--644}.

\bibitem{Lekili2019}
Y.~Lekili and A.~Polishchuk, ``{Derived equivalences of gentle algebras via
  {F}ukaya categories},''
  \href{http://dx.doi.org/10.1007/s00208-019-01894-5}{{\em Math. Ann.} (2020)
  187–--225}.

\bibitem{2018arXiv180604345L}
Y.~{Lekili} and K.~{Ueda}, ``{Homological mirror symmetry for {M}ilnor fibers
  via moduli of $A_\infty$-structures},'' {\em Preprint, arXiv: 1806.04345} .

\bibitem{2020arXiv200407374L}
Y.~{Lekili} and K.~{Ueda}, ``{Homological mirror symmetry for Milnor fibers of
  simple singularities},'' {\em Algebraic Geometry} (2021) .

\bibitem{Orlov2009}
D.~Orlov, {\em Derived categories of coherent sheaves and triangulated
  categories of singularities},
  \href{http://dx.doi.org/10.1007/978-0-8176-4747-6_16}{pp.~503--531}.
\newblock Birkh{\"a}user Boston, Boston, 2009.

\bibitem{pinkham1974deformations}
H.~Pinkham, {\em {Deformations of algebraic varieties with
  $\mathbb{G}_m$-action}}.
\newblock No.~v. 20-21. Soci{\'e}t{\'e} math{\'e}matique de France, 1974.

\bibitem{polishchuk2017}
A.~Polishchuk, ``Moduli of curves as moduli of ${A}_{\infty}$-structures,''
  \href{http://dx.doi.org/10.1215/00127094-2017-0019}{{\em Duke Math. J.}
  {\bfseries 166} no.~15, (10, 2017) 2871--2924}.

\bibitem{BSMF_2000__128_1_103_0}
P.~Seidel, ``Graded {L}agrangian submanifolds,''
  \href{http://dx.doi.org/10.24033/bsmf.2365}{{\em Bull. Soc. Math. France}
  {\bfseries 128} no.~1, (2000) 103--149}.

\bibitem{HMSQuartic}
P.~Seidel, ``Homological mirror symmetry for the quartic surface,''
  \href{http://dx.doi.org/10.1090/memo/1116}{{\em Mem. Amer. Math. Soc.}
  {\bfseries 236} (10, 2003) }.

\bibitem{seidel2008}
P.~Seidel, ``A biased view of symplectic cohomology,'' {\em Current
  Developments in Mathematics} {\bfseries 2006} (2008) 211--253.

\bibitem{SeidelBook}
P.~Seidel, \href{http://dx.doi.org/10.4171/063}{{\em Fukaya categories and
  {P}icard-{L}efschetz theory}}.
\newblock Zurich Lectures in Advanced Mathematics. European Mathematical
  Society (EMS), Z\"{u}rich, 2008.

\bibitem{Shipmandg}
I.~Shipman, ``{A geometric approach to {O}rlov's theorem},''
  \href{http://dx.doi.org/10.1112/S0010437X12000255}{{\em Compos. Math.}
  {\bfseries 148} (12, 2012) 1365--1389}.

\bibitem{Smyth}
D.~Smyth, ``Modular compactifications of the space of pointed elliptic curves
  {I},'' \href{http://dx.doi.org/10.1112/S0010437X10005014}{{\em Compos. Math.}
  {\bfseries 147} (2011) 877 -- 913}.

\bibitem{Weightedprojectivelines}
A.~Takahashi, ``Weighted projective lines associated to regular systems of
  weights of dual type,'' {\em Adv. Stud. Pure Math.} {\bfseries 59} (01, 2010)
  371--388.

\bibitem{2006math......4361U}
K.~{Ueda}, ``{Homological mirror symmetry and simple elliptic singularities},''
  {\em Preprint, arXiv: math/0604361} .

\bibitem{Hyperplanesections}
K.~Ueda, ``Hyperplane sections and stable derived categories,''
  \href{http://dx.doi.org/10.1090/S0002-9939-2014-12124-1}{{\em Proc. Am. Math.
  Soc} {\bfseries 142} no.~9, (07, 2012) 3019--3028}.

\end{thebibliography}\endgroup
\bibliographystyle{utcapsor2}

\end{document}